\numberwithin{equation}{section}
  \newtheorem{THM}{Theorem}[section]
  \newtheorem{LEM}[THM]{Lemma}
  \newtheorem{PROP}[THM]{Proposition}
  \newtheorem{COR}[THM]{Corollary}
   \let\oldendEX\endEX \def\endEX{$\diamondsuit$\oldendEX}
   \let\oldendEXA\endEXA \def\endEXA{$\diamondsuit$\oldendEXA}
   \let\oldendEXB\endEXB \def\endEXB{$\diamondsuit$\oldendEXB}
\newif\ifQEDsign
\newcommand{\QED}{\global\QEDsigntrue\hfill$\square$}
\newenvironment{proof}%
    {\par\noindent\textit{Proof.}\global\QEDsignfalse}%
    {\ifQEDsign\else\QED\fi\par\bigskip\par}
\def\labelenumi{(\roman{enumi})}
\newcommand{\iso}{\mathrm{iso}}
\newcommand\toCC{\overset{\CC}\longrightarrow}
\newcommand\toCCC{\overset{\CC^*}\longrightarrow}
\newcommand\toAAA{\overset{\AAA^*}\longrightarrow}
\newcommand\toCCIII{\overset{\CC_1 \times \CC_2}\longrightarrow}
\renewcommand{\le}{\leqslant}
\renewcommand{\ge}{\geqslant}
\newcommand{\0}{\varnothing}
\renewcommand{\sec}{\cap}
\renewcommand{\phi}{\varphi}
\renewcommand{\epsilon}{\varepsilon}
\newcommand{\UNION}{\bigcup}
\newcommand{\AAA}{\mathbb{A}}
\newcommand{\CC}{\mathbb{C}}
\newcommand{\CCfin}{\mathbb{C}_{\mathit{fin}}}
\newcommand{\NN}{\mathbb{N}}
\newcommand{\QQ}{\mathbb{Q}}
\newcommand{\RR}{\mathbb{R}}
\newcommand{\union}{\cup}
\newcommand{\restr}[2]{\mbox{$#1$}\mbox{$\upharpoonright$}_{#2}}
\newcommand{\restrindex}[2]{\mbox{\scriptsize$#1$}\mbox{$\upharpoonright$}_{#2}}
\newcommand{\Boxed}[1]{\mbox{$#1$}}
\newcommand{\id}{\mathrm{id}}
\newcommand{\Ob}{\mathrm{Ob}}
\newcommand{\Sym}{\mathrm{Sym}}
\newcommand{\op}{\mathit{op}}
\newcommand{\diam}{\mathrm{diam}}
\newcommand{\calA}{\mathcal{A}}
\newcommand{\calB}{\mathcal{B}}
\newcommand{\calC}{\mathcal{C}}
\newcommand{\calD}{\mathcal{D}}
\newcommand{\calE}{\mathcal{E}}
\newcommand{\calF}{\mathcal{F}}
\newcommand{\calP}{\mathcal{P}}
\newcommand{\calS}{\mathcal{S}}
\newcommand{\Fraisse}{Fra\"\i ss\'e}
\DeclareMathOperator{\Aut}{Aut}
\DeclareMathOperator{\Age}{Age}
\DeclareMathOperator{\ProjAge}{ProjAge}
\title{The Kechris-Pestov-Todor\v cevi\'c correspondence\\from the point of view of category theory}
\author{Dragan Ma\v sulovi\'c\\
        Department of Mathematics and Informatics\\
        Faculty of Sciences, University of Novi Sad, Serbia\\
        email: dragan.masulovic@dmi.uns.ac.rs}
\begin{document}
\maketitle

\begin{abstract}
  The Kechris-Pestov-Todor\v cevi\'c correspondence (KPT-correspondence for short)
  is a surprising correspondence between model theory, combinatorics and topological dynamics.
  In this paper we present a categorical re-interpretation of (a part of) the KPT-correspondence with the aim of
  proving a dual statement.
  Our strategy is to take a ``direct'' result and then analyze the necessary infrastructure that makes the result
  true by providing a purely categorical proof of the categorical version of the result. We can then capitalize on the Duality Principle
  to obtain the dual statements almost for free.
  We believe that the dual version of the KPT-correspondence can not only provide the new insights into the interplay of
  combinatorial, model-theoretic and topological phenomena this correspondence binds together,
  but also explores the limits to which categorical treatment of combinatorial phenomena can take us.

  \bigskip

  \noindent \textbf{Key Words:} duality, Ramsey property, Ramsey expansions, categorical constructions

  \bigskip

  \noindent \textbf{AMS Subj.\ Classification (2020):} 18A05, 05C55, 37B99
\end{abstract}

\section{Introduction}

The Kechris-Pestov-Todor\v cevi\'c correspondence (KPT-correspondence for short)
is a surprising correspondence between model theory, combinatorics and topological dynamics
published in 2005 in~\cite{KPT}. For a locally finite countable homogeneous first-order structure $\calF$
the paper \cite{KPT} establishes a correspondence between combinatorial properties of $\Age(\calF)$,
the class of finite substructures of $\calF$, and dynamical properties of $\Aut(\calF)$ in the following sense:
$\Aut(\calF)$ is extremely amenable if and only if $\Age(\calF)$ has the Ramsey property.
In case $\Aut(\calF)$ is not extremely amenable \cite{KPT} offers a technique to compute its universal minimal
flow in case the structure $\calF$ can be expanded by a linear order~$<$ in a particular way (see \cite{KPT} for details).

The KPT-correspondence was later generalized to uncountable structures (see \cite{bartosova1,bartosova2}
and \cite{ferenzi-lopez-abad-mbombo-todorcevic} for a more recent result), while
a generalization from the model-theoretic point of view can be found in~\cite{krupinski-pillay}.
In this paper we present a categorical re-interpretation of (a part of) the KPT-correspondence with the aim of
proving a dual statement.
It follows that for a projectively locally finite projectively homogeneous object $F$ in a suitable category $\CC$,
the automorphism group of $F$ endowed with an appropriate topology is extremely amenable if and only if the
projective age of $F$ has the dual Ramsey property.

It was Leeb who pointed out already in 1970s \cite{Leeb} that the use of category theory can be quite helpful both in the
formulation and in the proofs of results pertaining to structural Ramsey theory. This point of view was formally supported only recently
\cite{masul-scow} when it was established that the Ramsey property (formulated in the language of category theory) is a genuine categorical property.
Effectively, it means that this purely combinatorial property can be studied by a completely new set of tools -- the tools of category theory.
One possible way to do so is to take a ``direct'' Ramsey result and then analyze the necessary infrastructure that makes the result
true by providing a purely categorical proof of the categorical version of the result. We can then capitalize on the Duality Principle
to obtain the dual statements almost for free.

The search for dual Ramsey statements has been an important research direction in the past 50 years because dual Ramsey results
are relatively rare in comparison to the vast number of "direct" Ramsey results.
We believe that the dual version of the KPT-correspondence can not only provide the new insights into the interplay of
combinatorial, model-theoretic and topological phenomena this correspondence binds together,
but also explores the limits to which categorical treatment of combinatorial phenomena can take us.

This paper builds on many interpretations of the KPT-correspondence, but most notably on papers by
Nguyen Van The \cite{vanthe-more,vanthe-finramdeg}, Zucker \cite{zucker} and Dasilva Barbosa \cite{dasilvabarbosa}.

In Section~\ref{akpt.sec.prelim} we briefly fix the basic notions and notation. The notions that do not come from
category were imported from Fra\"\i ss\'e theory (e.g.\ the age of a structure) and model theory (e.g.\ the notion of a
locally finite structure). The Fra\"\i ss\'e theory provides the tools to understand countable structures by looking at
their finite approximations. Hence, it is essential to understand what a finite substructure of a structure is.
This is trivial in case of first-order structures, of course, but becomes quite a challenge in the setting of category theory.
It turns out that it suffices to consider subobjects that are ``finite in nature'' in the following sense:
the hom-set $\hom(A, B)$ has to be finite whenever $A$ and $B$ are ``finite'', and each ``finite'' object has to have
only finitely many ``finite'' subobjects (up to isomorphism).

In Section \ref{akpt.sec.ram-degs} we introduce the Ramsey property and Ramsey degrees in two
settings: for objects and for morphisms. Structural Ramsey theory has traditionally been the study of the Ramsey property for
finite structures. This view shifted slowly to a new perspective (see \cite{GLR,P-V,zucker,mu-pon})
where it became more convenient to consider colorings of embeddings instead of coloring of substructures.
This point of view is essentially a categorical point of view as demonstrated in \cite{masul-scow}.
Not surprisingly, the dual Ramsey property, which has been studied from the very beginning of the structural Ramsey theory in
the early 1970s, turns out to be nothing but the categorical dual of the Ramsey property. This is the starting
point of the present paper.

In Section~\ref{akpt.sec.RPEA} we provide the abstract categorical proof of the KPT-correspondence and,
by the straightforward application of the Duality Principle, obtain the dual of KPT-correspondence.
Roughly, the result claims that under certain assumptions
the automorphism group of a projectively locally finite projectively homogeneous object 
is extremely amenable if and only if the projective age of the object has the dual Ramsey property.

Ramsey property imposes a very strong requirement of a class of finite structures, so it is not surprising
that many classes of finite structures (such as finite graphs and finite partially ordered sets) do not enjoy the property.
It is quite common, though, that after expanding the finite structures under consideration with appropriately chosen
linear orders or unary predicates the resulting class of expanded structures has the Ramsey property.
In Section~\ref{akpt.sec.exp-group-acts} we consider expansions as nothing but special forgetful functors (see \cite{dasilvabarbosa}).
We introduce several additional requirements in the spirit of \cite{KPT} that make expansion particularly tame and
enable us to consider group actions of a particular kind.
In Section~\ref{akpt.sec.ramsey-expansions} we consider the existence of expansions that have the Ramsey property in the spirit
of~\cite{vanthe-finramdeg}. We conclude the paper by showing that even in this very abstract setting
the existence of finite Ramsey degrees is a necessary and sufficient condition for the Ramsey expansions to exist.
The dual statement is then immediate: the existence of finite dual Ramsey degrees is a necessary and sufficient condition for
the expansions with the dual Ramsey property to exist.

\section{Preliminaries}
\label{akpt.sec.prelim}

Let us quickly fix some notation. Let $\CC$ be a category. By $\Ob(\CC)$ we denote the class of all the objects in $\CC$.
Hom-sets in $\CC$ will be denoted by $\hom_\CC(A, B)$, or simply $\hom(A, B)$ when $\CC$ is clear from the context.
The identity morphism will be denoted by $\id_A$ and the composition of morphisms by $\Boxed\cdot$ (dot).
If $\hom_\CC(A, B) \ne \0$ we write $A \toCC B$. Let $\iso_\CC(A, B)$ denote the set of all the
invertible morphisms $A \to B$, and let $\Aut_\CC(A) = \iso(A, A)$ denote the set of all the
\emph{automorphisms of $A$}. As usual, $\CC^\op$ denotes the opposite category.

All the categories in this paper are locally small. We shall explicitly state this assumption in the
formulation of main results, but will often omit the explicit statement of this fact in the formulation
of auxiliary statements.

\bigskip

Let $\CC$ be a locally small category and
let $\CCfin$ be a full subcategory of $\CC$ such that the following holds:
\begin{description}
\item[(C1)]
   all the morphisms in $\CC$ are mono;
\item[(C2)]
  $\Ob(\CCfin)$ is a set;
\item[(C3)]
  for all $A, B \in \Ob(\CCfin)$ the set $\hom(A, B)$ is finite;
\item[(C4)]
  for every $F \in \Ob(\CC)$ there is an $A \in \Ob(\CCfin)$ such that $A \toCC F$; and
\item[(C5)]
  for every $B \in \Ob(\CCfin)$ the set $\{A \in \Ob(\CCfin) : A \toCC B\}$ is finite.
\end{description}
We think of objects in $\CCfin$ as templates of finite objects in $\CC$.

For the remainder of the section let us fix a locally small category $\CC$ and its full subcategory $\CCfin$ which
satisfies (C1)--(C5). The notions we introduce below depend on the pair $(\CC, \CCfin)$ but we shall often
omit the context from the notation to keep it simple.

Let $\AAA$ be a full subcategory of $\CCfin$.
An object $F \in \Ob(\CC)$ is \emph{homogeneous for $\AAA$} if for every $A \in \Ob(\AAA)$ and every pair of morphisms
$e_1, e_2 \in \hom(A, F)$ there is a $g \in \Aut(F)$ such that $g \cdot e_1 = e_2$:
\begin{center}
  \begin{tikzcd}
    F \arrow[rr, "g"] & & F \\
     & A \arrow[ul, "e_1"] \arrow[ur, "e_2"'] &
  \end{tikzcd}
\end{center}
An object $F \in \Ob(\CC)$ is \emph{homogeneous in $\CC$ (with respect to $\CCfin$)} if it is homogeneous for $\CCfin$.
For historical reasons (see~\cite{irwin-solecki}) we say that $F \in \Ob(\CC)$ is
\emph{projectively homogeneous} if it is homogeneous in $\CC^\op$ (with respect to $\CCfin^\op$).

An object $F$ is \emph{universal for $\AAA$} if $A \toCC F$ for all $A \in \Ob(\AAA)$, and it is
\emph{universal in $\CC$ (with respect to $\CCfin$)} if it is universal for~$\CCfin$.
Dually, $F$ is \emph{projectively universal in $\CC$ (with respect to $\CCfin$)} if $F$ is universal
in $\CC^\op$ (with respect to $\CCfin^\op$).
Let us define the \emph{age of $F$ in $(\CC,\CCfin)$} by
$$
  \Age_{(\CC,\CCfin)}(F) = \{A \in \Ob(\CCfin) : A \toCC F \}
$$
and the \emph{projective age of $F$ in $(\CC,\CCfin)$} by
$$
  \ProjAge_{(\CC,\CCfin)}(F) = \Age_{(\CC^\op,\CCfin^\op)}(F).
$$
Clearly, every $F$ is universal for its age and is projectively universal for its projective age.
Whenever $\CC$ and $\CCfin$ are fixed we shall simply write $\Age(F)$ and $\ProjAge(F)$.

An $F \in \Ob(\CC)$ is \emph{locally finite for $\AAA$} if
\begin{itemize}
\item
  for every $A, B \in \Ob(\AAA)$ and every $e \in \hom(A, F)$, $f \in \hom(B, F)$ there exist $D \in \Ob(\AAA)$,
  $r \in \hom(D, F)$, $p \in \hom(A, D)$ and $q \in \hom(B, D)$ such that $r \cdot p = e$ and $r \cdot q = f$:
  \begin{center}
      \begin{tikzcd}
        D \arrow[rr, "r"] & & F & &  \\
        & A \arrow[ul, "p"] \arrow[ur, "e"' near start] & & B \arrow[ulll, "q" near start] \ar[ul, "f"']
      \end{tikzcd}
  \end{center}
\item
  and for every $H \in \Ob(\CC)$, $r' :\in \hom(H, F)$, $p' \in \hom(A, H)$ and $q' \in \hom(B, H)$
  such that $r' \cdot p' = e$ and $r' \cdot q' = f$ there is an $s \in \hom(D, H)$ such that the diagram below commutes
  \begin{center}
      \begin{tikzcd}
        D \arrow[rr, "r"] \arrow[rrrr, dotted, bend left=20, "s"] & & F & & H \arrow[ll, "r'"'] \\
        & A \arrow[ul, "p"] \arrow[ur, "e" near end] \arrow[urrr, "p'"' near end] & & B \arrow[ulll, "q"' near end] \arrow[ul, "f"' near end]
            \arrow[ur, "q'"']
      \end{tikzcd}
  \end{center}
\end{itemize}
An $F \in \Ob(\CC)$ is \emph{locally finite in $\CC$ (with respect to $\CCfin$)} if it is locally finite for $\CCfin$,
and it is \emph{projectively locally finite in $\CC$ (with respect to $\CCfin$)} if it is locally finite in $\CC^\op$.

\emph{The automorphisms of $F$ are finitely separated in $\CC$ (with respect to $\CCfin$)} if
the following holds for all $f, g \in \Aut(F)$ such that $f \ne g$:
there is an $A \in \Ob(\CCfin)$ and an $e \in \hom(A, F)$
such that $f \cdot e \ne g \cdot e$.
\emph{The automorphisms of $F$ are projectively finitely separated} if
the automorphisms of $F$ are finitely separated in $\CC^\op$.

Finally, a category $\CC$ is \emph{directed} if for all $A, B \in \Ob(\CC)$ there exists a $C \in \Ob(\CC)$
such that $A \toCC C$ and $B \toCC C$.
A category $\CC$ is \emph{projectively directed} if $\CC^\op$ is directed.

\section{Ramsey property and Ramsey degrees in a category}
\label{akpt.sec.ram-degs}

Let us start by introducing the Ramsey property and Ramsey degrees in two
settings: for objects and for morphisms.

For $k \in \NN$, a $k$-coloring of a set $S$ is any mapping $\chi : S \to k$, where,
as usual, we identify $k$ with $\{0, 1,\ldots, k-1\}$.

For integers $k \ge 2$ and $t \ge 1$, and objects $A, B, C \in \Ob(\CC)$
such that $A \toCC B$ we write
$
  C \longrightarrow (B)^{A}_{k, t}
$
to denote that for every $k$-coloring $\chi : \hom(A, C) \to k$ there is a morphism
$w \in \hom(B, C)$ such that $|\chi(w \cdot \hom(A, B))| \le t$.
(For a set of morphisms $F$ we let $w \cdot F = \{ w \cdot f : f \in F \}$.)

In case $t = 1$ we write
$
  C \longrightarrow (B)^{A}_{k}.
$
On the other hand, we write $C \longrightarrow (B)^{A}_{\Boxed< \omega, t}$ to denote that
$C \longrightarrow (B)^{A}_{k, t}$ for all $k \ge 2$.

A category $\CC$ has the \emph{Ramsey property} if
for every integer $k \ge 2$ and all $A, B \in \Ob(\CC)$ there is a
$C \in \Ob(\CC)$ such that $C \longrightarrow (B)^{A}_k$.
A category $\CC$ has the \emph{dual Ramsey property} if $\CC^\op$ has the Ramsey property.

For $A \in \Ob(\CC)$ let $t_\CC(A)$ denote the least positive integer $n$ such that
for all $k \ge 2$ and all $B \in \Ob(\CC)$ there exists a $C \in \Ob(\CC)$ such that
$C \longrightarrow (B)^{A}_{k, n}$, if such an integer exists.
Otherwise put $t_\CC(A) = \infty$.

A category $\CC$ has the \emph{finite Ramsey degrees} if $t_\CC(A) < \infty$
for all $A \in \Ob(\CC)$.
By a dual procedure we can straightforwardly introduce the notion of dual Ramsey degrees.
We then say that a category $\CC$ has the \emph{finite dual Ramsey degrees} if $\CC^\op$ has finite Ramsey degrees.

Define $\sim_A$ on $\hom(A, B)$ as follows:
for $f, f' \in \hom(A, B)$ we let $f \sim_A f'$ if $f' = f \cdot \alpha$
for some $\alpha \in \Aut(A)$. Then
$$
  \binom B A = \hom(A, B) / \Boxed{\sim_A}
$$
corresponds to all subobjects of $B$ isomorphic to $A$.
For an integer $k \ge 2$ and $A, B, C \in \Ob(\CC)$ we write
$$
  C \overset\sim\longrightarrow (B)^{A}_{k, t}
$$
to denote that for every $k$-coloring
$
  \chi : \binom CA \to k
$
there is a morphism $w : B \to C$ such that $|\chi(w \cdot \binom BA)| \le t$.
(Note that $w \cdot (f / \Boxed{\sim_A}) = (w \cdot f) / \Boxed{\sim_A}$ for $f / \Boxed{\sim_A} \in \binom B A$.)
Instead of $C \overset\sim\longrightarrow (B)^{A}_{k, 1}$ we simply write
$C \overset\sim\longrightarrow (B)^{A}_{k}$.

A category $\CC$ has the \emph{Ramsey property for objects} if
for every integer $k \ge 2$ and all $A, B \in \Ob(\CC)$ there is a
$C \in \Ob(\CC)$ such that $C \overset\sim\longrightarrow (B)^{A}_k$.

For $A \in \Ob(\CC)$ let $t^\sim_\CC(A)$ denote the least positive integer $n$ such that
for all $k \ge 2$ and all $B \in \Ob(\CC)$ there exists a $C \in \Ob(\CC)$ such that
$C \overset\sim\longrightarrow (B)^{A}_{k, n}$, if such an integer exists.
Otherwise put $t^\sim_\CC(A) = \infty$.

\begin{PROP}\label{rdbas.prop.sml}
  Let $\CC$ be a locally small category such that all the morphisms in $\CC$ are mono
  and let $A \in \Ob(\CC)$. Then $t_\CC(A)$ is finite if and only if both $t^\sim_\CC(A)$ and $\Aut(A)$ are finite,
  and in that case
  $$
    t_\CC(A) = |\Aut(A)| \cdot t^\sim_\CC(A).
  $$
\end{PROP}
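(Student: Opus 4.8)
The plan is to exploit the fact that, since every morphism of $\CC$ is mono, each $\sim_A$-class inside $\hom(A,C)$ has exactly $|\Aut(A)|$ elements: the map $\alpha\mapsto f\cdot\alpha$ is injective because $f$ is mono. Writing $q=|\Aut(A)|$, $a=t^\sim_\CC(A)$ and $t=t_\CC(A)$, I would first fix, for every object $C$ and every class $X\in\binom CA$, a representative $f_X\in X$. Mono-ness then yields a bijection $\hom(A,C)\to\binom CA\times\Aut(A)$ sending $g$ to $([g],\alpha_g)$, where $[g]$ is the class of $g$ and $\alpha_g$ is the unique automorphism with $g=f_{[g]}\cdot\alpha_g$. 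The single structural input I need is that for any $w\in\hom(B,C)$ the set $w\cdot\hom(A,B)$ is a union of full $\sim_A$-classes, and the classes it meets are exactly those in $w\cdot\binom BA$; under the bijection it therefore corresponds to $(w\cdot\binom BA)\times\Aut(A)$.

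With this in hand the statement becomes a counting problem relating colorings of $\hom(A,C)$ to colorings of $\binom CA$. In one direction, from a coloring $\psi:\binom CA\to k_0$ I build the morphism-coloring $\chi(g)=(\psi([g]),\alpha_g)$ using $k_0q$ colors; the description of $w\cdot\hom(A,B)$ gives the \emph{exact} identity $|\chi(w\cdot\hom(A,B))|=q\cdot|\psi(w\cdot\binom BA)|$. In the other direction, from a morphism-coloring $\chi:\hom(A,C)\to k$ I build $\chi^\ast(X)=(\chi(f_X\cdot\alpha))_{\alpha\in\Aut(A)}\in k^{\Aut(A)}$ using $k^q$ colors; here I only obtain the inequality $|\chi(w\cdot\hom(A,B))|\le q\cdot|\chi^\ast(w\cdot\binom BA)|$, since classes sharing a $\chi^\ast$-value contribute the same (at most $q$) colors. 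The exact identity will power the lower bound on $t$, and the inequality the upper bound.

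The argument then splits into three claims. Assuming $q$ and $a$ finite, to see $t\le qa$ I fix $k$ and $B$, apply the object Ramsey property with $k^q$ colors to obtain $C$ with $C\overset\sim\longrightarrow(B)^A_{k^q,a}$, and feed an arbitrary $\chi:\hom(A,C)\to k$ through $\chi^\ast$; the reducing morphism $w$ then satisfies $|\chi(w\cdot\hom(A,B))|\le q\cdot a$, so $C\longrightarrow(B)^A_{k,qa}$. Conversely, assuming only $t$ finite and (temporarily) $q$ finite, I fix $k_0$ and $B$, apply the morphism property with $k_0q$ colors to get $C$ with $C\longrightarrow(B)^A_{k_0q,t}$, and feed an arbitrary $\psi$ through the exact identity; the reducing $w$ gives $q\cdot|\psi(w\cdot\binom BA)|\le t$, whence $t^\sim_\CC(A)\le\lfloor t/q\rfloor$ and $qa\le t$. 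Sandwiching yields $t=qa$.

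The step I expect to be the genuine obstacle is showing that finiteness of $t$ forces $\Aut(A)$ to be finite, since everything above presupposes it. I would argue the contrapositive: if $\Aut(A)$ is infinite, take $B=A$ and, for each $n\ge1$, color $\hom(A,C)$ by $\chi(g)=\sigma(\alpha_g)$ where $\sigma:\Aut(A)\twoheadrightarrow\{0,\dots,n\}$ is any surjection. Because every class, in particular $w\cdot\Aut(A)=[w]\subseteq w\cdot\hom(A,A)$, meets every fibre of $\sigma$, every $w$ satisfies $|\chi(w\cdot\hom(A,A))|\ge n+1$, so no $C$ realizes $C\longrightarrow(A)^A_{n+1,n}$ and hence $t_\CC(A)=\infty$. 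The care here is essentially in the quantifiers -- producing, for fixed $k$ and $B$, a single bad coloring that defeats \emph{every} $C$ simultaneously -- together with verifying that the two encodings genuinely use only finitely many colors whenever they are invoked.
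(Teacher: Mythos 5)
Your proof is correct and follows essentially the same route as the paper's: both rest on the observation that mono-ness makes each $\sim_A$-class a copy of $\Aut(A)$, use the product coloring $g\mapsto(\psi([g]),\alpha_g)$ for the lower bound, a "record the colors on each class" coloring for the upper bound (you use $k^{|\Aut(A)|}$ tuples where the paper uses $2^k$ subsets, an immaterial difference), and a coloring that reads off $\alpha_g$ to rule out infinite $\Aut(A)$. The only organizational difference is that you absorb the case $t^\sim_\CC(A)=\infty$ into the inequality $|\Aut(A)|\cdot t^\sim_\CC(A)\le t_\CC(A)$ rather than treating it separately, which is fine.
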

\begin{proof}
  Assume, first, that $|\Aut(A)| = \infty$.
  Let us show that $t_\CC(A) = \infty$ by showing that
  $t_\CC(A) \ge n$ for every $n \in \NN$. Fix an $n \in \NN$ and $X \subseteq \Aut(A)$ such that $|X| = n$.
  Since $t^\sim_\CC(A) \ge 1$ there is
  a $k \ge 2$ and a $B \in \Ob(\CC)$ such that for every $C \in \Ob(\CC)$ one can find a coloring
  $\chi : \binom CA \to k$ such that for every $w : B \to C$ we have that
  $|\chi(w \cdot \binom BA)| \ge 1$. This is, of course, trivial. We need this argument
  just to ensure the existence of a $B$ such that $A \toCC B$.
  
  Let $\binom CA = \hom(A, C) / \Boxed{\sim_A}  = \{H_i : i \in I\}$ for some index set $I$.
  For each $i \in I$ choose a representative $h_i \in H_i$. Then $H_i = h_i \cdot \Aut(A)$.
  Fix an arbitrary $\xi \in X$ and define $\chi' : \hom(A, C) \to X$ as follows:
  \begin{itemize}
  \item[]
    if $g = h_i \cdot \alpha$ for some $i \in I$ and some $\alpha \in X$ then $\chi'(g) = \alpha$;
  \item[]
    otherwise $\chi'(g) = \xi$.
  \end{itemize}
  Take any $w : B \to C$. Let $f \in \hom(A, B)$ be arbitrary. Then:
  $$
    |\chi'(w \cdot \hom(A, B))| \ge |\chi'(w \cdot f \cdot \Aut(A))|.
  $$
  Clearly, $w \cdot f \cdot \Aut(A) = h_i \cdot \Aut(A)$ for some $i \in I$, so
  $$
    |\chi'(w \cdot \hom(A, B))| \ge |\chi'(h_i \cdot \Aut(A))| = n.
  $$
  This completes the proof in case $\Aut(A)$ is infinite.
  
  Let us now move on to the case when $\Aut(A)$ is finite.

  Let $t^\sim_\CC(A) = n$ for some $n \in \NN$.
  Take any $k \ge 2$ and any $B \in \Ob(\CC)$. Then there is a $C \in \Ob(\CC)$
  such that $C \overset{\sim}\longrightarrow (B)^{A}_{2^k, n}$. Let $\chi : \hom(A, C) \to k$ be an
  arbitrary coloring. Construct $\chi' : \binom CA \to \calP(k)$ as follows:
  $$
    \chi'(f / \Boxed{\sim_A}) = \chi(f / \Boxed{\sim_A})
  $$
  (here, $\chi$ is applied to a set of morphisms to produce a set of colors, which is an element of $\calP(k)$).
  Then $C \overset{\sim}\longrightarrow (B)^{A}_{2^k, n}$ implies that there exists a $w : B \to C$ such that
  $|\chi'(w \cdot \binom BA)| \le n$. Since $w \cdot (f / \Boxed{\sim_A}) = (w \cdot f) / \Boxed{\sim_A}$ it follows
  that
  $$
    w \cdot \binom BA = \{ (w \cdot f) / \Boxed{\sim_A} : f \in \hom(A, B) \}.
  $$
  Moreover, the morphisms in $\CC$ are mono, so $|u / \Boxed{\sim_A}| = |\Aut(A)|$
  for each morphism $u \in \hom(A, C)$. Therefore,
  $|\chi'(w \cdot \binom BA)| \le n$ implies that $|\chi(w \cdot \hom(A, B))| \le n \cdot |\Aut(A)|$
  proving that $t_\CC(A) \le n \cdot |\Aut(A)| = t^\sim_\CC(A) \cdot |\Aut(A)|$.
  
  For the other inequality note that $t^\sim_\CC(A) = n$ also implies that there is a $k \ge 2$ and a
  $B \in \Ob(\CC)$ such that for every $C \in \Ob(\CC)$ one can find a coloring
  $\chi_C : \binom CA \to k$ with the property that for every $w \in \hom(B, C)$ we have that
  $|\chi_C(w \cdot \binom BA)| \ge n$. Let $\ell = k \cdot |\Aut(A)|$ and take an arbitrary $C \in \Ob(\CC)$.
  Let $\binom CA = \hom(A, C) / \Boxed{\sim_A}  = \{H_i : i \in I\}$ for some index set $I$.
  For each $i \in I$ choose a representative $h_i \in H_i$. Then $H_i = h_i \cdot \Aut(A)$.  
  Since all the morphisms in $\CC$ are mono, for each $f \in \hom(A, C)$ there is a unique
  $i \in I$ and a unique $\alpha \in \Aut(A)$ such that $f = h_i \cdot \alpha$. Let us denote this
  $\alpha$ by $\alpha(f)$. Consider the following coloring:
  $$
    \xi : \hom(A, C) \to k \times \Aut(A) : f \mapsto (\chi_C(f/\Boxed{\sim_A}), \alpha(f))
  $$
  and take any $w \in \hom(B, C)$. Since
  $|\chi_C(w \cdot \binom BA)| \ge n$, it easily follows that $|\xi(w \cdot \hom(A, B))| \ge n \cdot |\Aut(A)|$
  proving that $t_\CC(A) \ge n \cdot |\Aut(A)| = t^\sim_\CC(A) \cdot |\Aut(A)|$.

  Assume, finally, that $t^\sim_\CC(A) = \infty$ and let us show that $t_\CC(A) = \infty$ by showing that
  $t_\CC(A) \ge n$ for every $n \in \NN$. Fix an $n \in \NN$. Since $t_\CC(A) = \infty$, there is
  a $k \ge 2$ and a $B \in \Ob(\CC)$ such that for every $C \in \Ob(\CC)$ one can find a coloring
  $\chi : \binom CA \to k$ such that for every $w : B \to C$ we have that
  $|\chi(w \cdot \binom BA)| \ge n$. Then the coloring $\chi' : \hom(A, C) \to k$ defined by
  $$
    \chi'(f) = \chi(f / \Boxed{\sim_A})
  $$
  has the property that $|\chi(w \cdot \hom(A, B))| \ge n$.

  This completes the proof.
\end{proof}

As an immediate corollary we have the following:

\begin{COR}
  Let $\CC$ be a locally small category such that all the morphisms in $\CC$ are mono
  and let $A \in \Ob(\CC)$. Then

  $(a)$ $t_\CC(A) \ge |\Aut(A)|$;
  
  $(b)$ if $t_\CC(A) \le n$ then $|\Aut(A)| \le n$;
  
  $(c)$ if $t_\CC(A) = 1$ then $A$ is rigid.
\end{COR}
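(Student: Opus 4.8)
The plan is to read all three inequalities directly off Proposition~\ref{rdbas.prop.sml}, using in addition only the trivial observation that $t^\sim_\CC(A) \ge 1$: by definition $t^\sim_\CC(A)$ is the \emph{least positive} integer with a certain property, so it is either $\infty$ or an integer $\ge 1$. Thus the entire content of the corollary is already encoded in the identity $t_\CC(A) = |\Aut(A)| \cdot t^\sim_\CC(A)$ together with the finiteness dichotomy supplied by the proposition.

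For part $(a)$ I would split according to whether $t_\CC(A)$ is finite. If $t_\CC(A) = \infty$ the inequality $t_\CC(A) \ge |\Aut(A)|$ is automatic. If $t_\CC(A)$ is finite, then Proposition~\ref{rdbas.prop.sml} guarantees that $\Aut(A)$ is finite and that $t_\CC(A) = |\Aut(A)| \cdot t^\sim_\CC(A)$; since $t^\sim_\CC(A) \ge 1$, this gives $t_\CC(A) \ge |\Aut(A)|$. For part $(b)$ I would note that the hypothesis $t_\CC(A) \le n$ forces $t_\CC(A)$ to be finite, so part $(a)$ applies and yields $|\Aut(A)| \le t_\CC(A) \le n$; in particular $\Aut(A)$ is finite.

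For part $(c)$ I would simply specialize part $(b)$ to $n = 1$, obtaining $|\Aut(A)| \le 1$; since $\id_A$ is always an automorphism of $A$, this forces $\Aut(A) = \{\id_A\}$, i.e.\ $A$ is rigid. I do not expect any genuine obstacle here, as the statement is a bookkeeping consequence of the proposition; the only points requiring a little care are the handling of the infinite cases in $(a)$ and the explicit recording of the bound $t^\sim_\CC(A) \ge 1$, both of which are immediate from the definitions.
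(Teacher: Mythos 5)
Your proposal is correct and follows exactly the route the paper takes: it derives $(a)$ from Proposition~\ref{rdbas.prop.sml} (via the identity $t_\CC(A) = |\Aut(A)| \cdot t^\sim_\CC(A)$ and $t^\sim_\CC(A) \ge 1$), and then obtains $(b)$ and $(c)$ as immediate consequences of $(a)$. The paper's own proof is just a one-line citation of the proposition; your write-up supplies the same argument with the routine details made explicit.
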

\begin{proof}
  $(a)$ follows from Proposition~\ref{rdbas.prop.sml}, while
  $(b)$ and $(c)$ are direct consequences of $(a)$.
\end{proof}

Let us also show that Ramsey degrees are multiplicative.

\begin{THM}
  Let $\CC_1$ and $\CC_2$ be categories whose morphisms are mono.
  Assume that for each $i \in \{1, 2\}$ and all $A_i, B_i \in \Ob(\CC_i)$ we have that
  $\hom_{\CC_i}(A_i, B_i)$ is finite and that $t_{\CC_i}(A_i)$ is finite.
  Then for all $A_1 \in \Ob(\CC_1)$ and $A_2 \in \Ob(\CC_2)$ we have that
  $$
    t_{\CC_1 \times \CC_2}(A_1, A_2) \le t_{\CC_1}(A_1) \cdot t_{\CC_2}(A_2).
  $$
  Consequently,
  $$
    t^\sim_{\CC_1 \times \CC_2}(A_1, A_2) \le t^\sim_{\CC_1}(A_1) \cdot t^\sim_{\CC_2}(A_2).
  $$
\end{THM}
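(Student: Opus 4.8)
The plan is to run the classical product (iterated) Ramsey argument: use the two hypothesised factor degrees $t_1 := t_{\CC_1}(A_1)$ and $t_2 := t_{\CC_2}(A_2)$ one after the other, exploiting the standing finiteness of all hom-sets to keep the intermediate colour counts finite. The starting observation is that in $\CC_1 \times \CC_2$ a hom-set factors as $\hom((A_1,A_2),(C_1,C_2)) = \hom(A_1,C_1) \times \hom(A_2,C_2)$, and for $(w_1,w_2) \in \hom((B_1,B_2),(C_1,C_2))$ we have $(w_1,w_2) \cdot (\hom(A_1,B_1) \times \hom(A_2,B_2)) = (w_1 \cdot \hom(A_1,B_1)) \times (w_2 \cdot \hom(A_2,B_2))$. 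So, fixing $k \ge 2$ and $(B_1,B_2)$ with $(A_1,A_2) \toCCIII (B_1,B_2)$, I must produce $(C_1,C_2)$ so that every $k$-colouring of the product $\hom(A_1,C_1) \times \hom(A_2,C_2)$ uses at most $t_1 t_2$ colours on some product box $(w_1 \cdot \hom(A_1,B_1)) \times (w_2 \cdot \hom(A_2,B_2))$.

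First I would fix the factors in the right order. Choose $C_1 \in \Ob(\CC_1)$ with $C_1 \longrightarrow (B_1)^{A_1}_{k^{t_2}, t_1}$, which exists because $t_{\CC_1}(A_1) = t_1$. Since all hom-sets are finite, $N := |\hom(A_1,C_1)|$ is a finite number, so I may then choose $C_2 \in \Ob(\CC_2)$ with $C_2 \longrightarrow (B_2)^{A_2}_{k^{N}, t_2}$, using $t_{\CC_2}(A_2) = t_2$. Now, given any $\chi : \hom(A_1,C_1) \times \hom(A_2,C_2) \to k$, I slice along the second coordinate: each $g \in \hom(A_2,C_2)$ gives a ``profile'' $\chi(-,g) \in k^{\hom(A_1,C_1)}$, and this defines a colouring of $\hom(A_2,C_2)$ by the finite palette $k^{\hom(A_1,C_1)}$ of size $k^{N}$. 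Factor Ramsey in $\CC_2$ then yields $w_2 \in \hom(B_2,C_2)$ and at most $t_2$ profiles $\phi_1,\dots,\phi_{t_2} \in k^{\hom(A_1,C_1)}$ such that $\chi(-, w_2 \cdot g) \in \{\phi_1,\dots,\phi_{t_2}\}$ for every $g \in \hom(A_2,B_2)$.

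Next I would bundle the surviving profiles into a single colouring $\Phi : \hom(A_1,C_1) \to k^{t_2}$, $\Phi(f) = (\phi_1(f),\dots,\phi_{t_2}(f))$, whose palette has exactly $k^{t_2}$ colours. Factor Ramsey in $\CC_1$ (this is why $C_1$ was set up for $k^{t_2}$ colours) produces $w_1 \in \hom(B_1,C_1)$ with $|\Phi(w_1 \cdot \hom(A_1,B_1))| \le t_1$. The pair $(w_1,w_2)$ then works: for $f \in \hom(A_1,B_1)$ and $g \in \hom(A_2,B_2)$, writing $\chi(-, w_2 \cdot g) = \phi_{j}$ for some $j \le t_2$, the colour $\chi(w_1 \cdot f, w_2 \cdot g) = \phi_{j}(w_1 \cdot f)$ is the $j$-th coordinate of $\Phi(w_1 \cdot f)$, hence is determined by the index $j$ (at most $t_2$ choices) together with the value $\Phi(w_1 \cdot f)$ (at most $t_1$ choices). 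Thus the product box is coloured with at most $t_1 t_2$ colours, giving $(C_1,C_2) \longrightarrow ((B_1,B_2))^{(A_1,A_2)}_{k, t_1 t_2}$ and hence $t_{\CC_1 \times \CC_2}(A_1,A_2) \le t_1 t_2$. I expect the only real obstacle to be this bookkeeping of nested palettes --- getting the two colour counts ($k^{t_2}$ and $k^{N}$) and the order of the two Ramsey applications to mesh --- and it is precisely here that the finiteness of $\hom(A_1,C_1)$ is indispensable, since otherwise the intermediate palette $k^{\hom(A_1,C_1)}$ would be infinite and the first Ramsey step unavailable.

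Finally, for the ``consequently'' clause I would pass through Proposition~\ref{rdbas.prop.sml}. The product category is locally small with finite hom-sets, and its morphisms are mono (a pair is mono as soon as both components are), while $\Aut((A_1,A_2)) = \Aut(A_1) \times \Aut(A_2)$ gives $|\Aut((A_1,A_2))| = |\Aut(A_1)| \cdot |\Aut(A_2)|$. Since $t_{\CC_1 \times \CC_2}(A_1,A_2)$ is finite by the bound just proved, Proposition~\ref{rdbas.prop.sml} applies in all three categories, and dividing the inequality $t_{\CC_1 \times \CC_2}(A_1,A_2) \le t_{\CC_1}(A_1)\, t_{\CC_2}(A_2)$ by $|\Aut((A_1,A_2))| = |\Aut(A_1)|\,|\Aut(A_2)|$ yields $t^\sim_{\CC_1 \times \CC_2}(A_1,A_2) \le t^\sim_{\CC_1}(A_1)\, t^\sim_{\CC_2}(A_2)$.
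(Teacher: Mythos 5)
Your proof is correct and follows essentially the same route as the paper's: slice the colouring along the second coordinate, apply the $\CC_2$ Ramsey degree to reduce to at most $t_2$ profiles, then apply the $\CC_1$ Ramsey degree to the bundled profile data, and count $t_1 \cdot t_2$ combinations on the product box. The only cosmetic difference is that you encode the second-stage colouring as a tuple in $k^{t_2}$ where the paper uses the set of slice-colours in $\calP(k)$ (palette $2^k$); both work, and the deduction of the $t^\sim$ inequality via Proposition~\ref{rdbas.prop.sml} is identical.
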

\begin{proof}
  The second part of the statement is an immediate consequence of the first part of the statement,
  Proposition~\ref{rdbas.prop.sml} and the fact that
  $$
    |\Aut_{\CC_1 \times \CC_2}(A_1, A_2)| = |\Aut_{\CC_1}(A_1)| \cdot |\Aut_{\CC_2}(A_2)|.
  $$
  To show the first part of the statement
  take any $k \ge 2$ and any $(B_1, B_2) \in \Ob(\CC_1 \times \CC_2)$ such that
  $(A_1, A_2) \toCCIII (B_1, B_2)$. Let $t_{\CC_1}(A_1) = n_1$ and
  $t_{\CC_2}(A_2) = n_2$, and choose $C_1 \in \Ob(\CC_1)$ and $C_2 \in \Ob(\CC_2)$ so that
  $$
    C_1 \longrightarrow (B_1)^{A_1}_{2^k, \; n_1} \text{\quad and\quad}
    C_2 \longrightarrow (B_2)^{A_2}_{k^h, \; n_2},
  $$
  where $h = |\hom_{\CC_1}(A_1, C_1)|$. Let us show that
  $$
    (C_1, C_2) \longrightarrow (A_1, A_2)^{(B_1, B_2)}_{k, \; n_1 \cdot n_2}.
  $$
  Take any coloring
  $
    \chi : \hom_{\CC_1 \times \CC_2}((A_1, A_2), (C_1, C_2)) \to k
  $ and let
  $$
    \chi' : \hom_{\CC_2}(A_2, C_2) \to k^{\hom_{\CC_1}(A_1, C_1)}
  $$
  be the coloring defined by $\chi'(e_2) = f_{e_2}$ where $f_{e_2}(e_1) = \chi(e_1, e_2)$.
  Since $C_2 \longrightarrow (B_2)^{A_2}_{k^h, \; n_2}$ there is a $w_2 \in \hom_{\CC_2}(B_2, C_2)$ such that
  \begin{equation}\label{sbrd.eq.t-prod-2}
    |\chi'(w_2 \cdot \hom_{\CC_2}(A_2, B_2))| \le n_2.
  \end{equation}
  Now define $\chi'' : \hom_{\CC_1}(A_1, C_1) \to \calP(k)$ by
  $$
    \chi''(e_1) = \{\chi(e_1, e_2) : e_2 \in w_2 \cdot \hom_{\CC_2}(B_2, C_2)\}.
  $$
  Since $C_1 \longrightarrow (B_1)^{A_1}_{2^k, \; n_1}$ there is a $w_1 \in \hom_{\CC_1}(B_1, C_1)$ such that
  \begin{equation}\label{sbrd.eq.t-prod-1}
    |\chi''(w_1 \cdot \hom_{\CC_1}(A_1, B_1))| \le n_1.
  \end{equation}
  Clearly, $(w_1, w_2) : (B_1, B_2) \to (C_1, C_2)$ so let us show that
  $$
    |\chi((w_1, w_2) \cdot \hom_{\CC_1 \times \CC_2}((A_1, A_2), (B_1, B_2)))| \le n_1 \cdot n_2.
  $$
  To start with, note that
  \begin{align*}
    \chi(&(w_1, w_2) \cdot \hom_{\CC_1 \times \CC_2}((A_1, A_2), (B_1, B_2))) =\\
      &= \{\chi(e_1, e_2) : e_1 \in w_1 \cdot \hom_{\CC_1}(A_1, B_1), e_2 \in w_2 \cdot \hom_{\CC_2}(A_2, B_2) \}\\
      &= \bigcup \{ \chi''(e_1) : e_1 \in w_1 \cdot \hom_{\CC_1}(A_1, B_1)\}.
  \end{align*}
  This union has at most $n_1$ distinct elements because of~\eqref{sbrd.eq.t-prod-1}.
  Now, fix an $e_1 \in w_1 \cdot \hom_{\CC_1}(A_1, B_1)$ and let us estimate the size of $\chi''(e_1)$:
  \begin{align*}
    \chi''(e_1)
      &= \{\chi(e_1, e_2) : e_2 \in w_2 \cdot \hom_{\CC_2}(B_2, C_2)\}\\
      &= \{f_{e_2}(e_1) : e_2 \in w_2 \cdot \hom_{\CC_2}(B_2, C_2)\}.
  \end{align*}
  Because of~\eqref{sbrd.eq.t-prod-2} we have that
  $$
    |\{f_{e_2} : e_2 \in w_2 \cdot \hom_{\CC_2}(B_2, C_2) \}| \le n_2,
  $$
  so these $n_2$ functions applied to a single value $e_1$ can produce at most $n_2$ distinct values. Therefore,
  $|\chi''(e_1)| \le n_2$ for each $e_1 \in w_1 \cdot \hom_{\CC_1}(A_1, B_1)$.
  Therefore, the union consists of at most $n_1$ distinct sets, and each
  set appearing in the union has at most $n_2$ elements, whence
  $$
    \chi((w_1, w_2) \cdot \hom_{\CC_1 \times \CC_2}((A_1, A_2), (B_1, B_2))) \le n_1 \cdot n_2.
  $$
  This completes the proof.
\end{proof}

Let $A$ and $B$ be sets and let $f : A \to B$ be a function. Recall that
\emph{the kernel of $f$} is the equivalence relation
$\ker f = \{(x, y) \in A^2 : f(x) = f(y)\}$.

Fix an object $F \in \Ob(\CC)$, and let $A, B \in \Ob(\CCfin)$ be such that $A \toCC B \toCC F$.
Every coloring $\chi : \hom(A, F) \to k$ and every $w \in \hom(B, F)$ induce a coloring
$\chi^{(w)} : \hom(A, B) \to k$ by $\chi^{(w)}(f) = \chi(w \cdot f)$.
Given $F$, a coloring $\lambda : \hom(A, B) \to t$, $t \ge 2$, is \emph{essential at $B$} if
for every coloring $\chi : \hom(A, F) \to k$ there is a $w \in \hom(B, F)$
such that $\ker \lambda \subseteq \ker \chi^{(w)}$.

\begin{LEM}\label{akpt.lem.ramseyF}
  Let $\AAA$ be a full subcategory of $\CCfin$ and fix an $F \in \Ob(\CC)$ which is universal and locally finite
  for $\AAA$. The following are equivalent for all $t \ge 2$ and all $A \in \Ob(\AAA)$:
  \begin{enumerate}\def\labelenumi{(\arabic{enumi})}
  \item
    $t_{\AAA}(A) \le t$;
  \item
    $F \longrightarrow (B)^A_{\Boxed< \omega, t}$ for all $B \in \Ob(\AAA)$ such that $A \toCC B$;
  \item
    for all $B \in \Ob(\AAA)$ such that $A \toCC B$
    there is a coloring $\lambda : \hom(A, B) \to t$ which is essential at~$B$.
  \end{enumerate}
\end{LEM}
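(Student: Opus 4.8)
The plan is to establish the cycle $(1) \Rightarrow (2) \Rightarrow (3) \Rightarrow (1)$, drawing on universality of $F$ for the first implication, a purely combinatorial product argument for the second, and a compactness argument resting on local finiteness for the third. For $(1) \Rightarrow (2)$ I would fix $B \in \Ob(\AAA)$ with $A \toCC B$ and a colouring $\chi : \hom(A, F) \to k$. Since $t_\AAA(A) \le t$, there is a $C \in \Ob(\AAA)$ with $C \longrightarrow (B)^{A}_{k, t}$, and by universality of $F$ for $\AAA$ there is an $\iota \in \hom(C, F)$. Applying $C \longrightarrow (B)^{A}_{k, t}$ to the colouring $g \mapsto \chi(\iota \cdot g)$ of $\hom(A, C)$ yields a $w' \in \hom(B, C)$ with $|\{\chi(\iota \cdot w' \cdot f) : f \in \hom(A,B)\}| \le t$; setting $w = \iota \cdot w'$ gives $\chi(w \cdot \hom(A,B))$ of size $\le t$, so $F \longrightarrow (B)^{A}_{k,t}$. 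As $k$ was arbitrary this is $(2)$.

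For $(2) \Rightarrow (3)$, fix $B$ with $A \toCC B$. The set $\hom(A,B)$ is finite by (C3), so there are only finitely many equivalence relations $\rho$ on it having at most $t$ classes; collect these into a finite set $E$ and choose $\lambda_\rho : \hom(A,B) \to t$ with $\ker \lambda_\rho = \rho$ for each $\rho \in E$. Suppose no $\lambda_\rho$ is essential at $B$: then for each $\rho \in E$ there is a colouring $\chi_\rho : \hom(A,F) \to k_\rho$ (necessarily $k_\rho \ge 2$) witnessing non-essentiality, i.e.\ $\rho \not\subseteq \ker \chi_\rho^{(w)}$ for every $w \in \hom(B,F)$. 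Form the product colouring $\chi = (\chi_\rho)_{\rho \in E}$ with $k = \prod_{\rho \in E} k_\rho$, so that $\ker \chi^{(w)} = \bigcap_{\rho \in E} \ker \chi_\rho^{(w)}$ for all $w$. By $(2)$ there is a $w$ with $|\chi(w \cdot \hom(A,B))| \le t$, hence $\rho^* := \ker \chi^{(w)} \in E$; but then $\rho^* = \bigcap_{\rho} \ker \chi_\rho^{(w)} \subseteq \ker \chi_{\rho^*}^{(w)}$, contradicting the choice of $\chi_{\rho^*}$. Therefore some $\lambda_\rho$ is essential at $B$, and since $B$ was arbitrary this gives $(3)$.

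For $(3) \Rightarrow (1)$ I would first record the trivial implication $(3) \Rightarrow (2)$: if $\lambda$ is essential at $B$, then for every $\chi$ the witnessing $w$ has $\ker \lambda \subseteq \ker \chi^{(w)}$, so $\chi^{(w)}$ has at most as many classes as $\lambda$, i.e.\ $|\chi(w \cdot \hom(A,B))| \le t$. It then remains to prove $(2) \Rightarrow (1)$, where local finiteness enters. Fix $k$ and $B$, and suppose for contradiction that $C \not\longrightarrow (B)^{A}_{k,t}$ for every finite $C \in \Ob(\AAA)$; each such $C$ then carries a \emph{bad} colouring $\chi_C : \hom(A,C) \to k$ with $|\chi_C(w' \cdot \hom(A,B))| > t$ for all $w' \in \hom(B,C)$. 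Consider the compact space $k^{\hom(A,F)}$ (Tychonoff; $\hom(A,F)$ is a set by local smallness) and, for $w \in \hom(B,F)$, the clopen set $U_w = \{\chi : |\chi(w \cdot \hom(A,B))| > t\}$, clopen because $w \cdot \hom(A,B)$ is finite. A point of $\bigcap_{w} U_w$ is exactly a bad colouring of $F$, contradicting $(2)$, so it suffices to verify the finite intersection property. Given $w_1, \dots, w_m \in \hom(B,F)$, I would iterate the factorization clause of local finiteness (applied to these morphisms out of the finite object $B$) to obtain a single $C \in \Ob(\AAA)$ and $\rho \in \hom(C,F)$ with $w_j = \rho \cdot w_j'$ for suitable $w_j' \in \hom(B,C)$; since $\rho$ is mono by (C1), the bad colouring $\chi_C$ pulls back along $\rho$ to a well-defined colouring on $\rho \cdot \hom(A,C)$, which I extend arbitrarily to a $\chi \in k^{\hom(A,F)}$. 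Then $|\chi(w_j \cdot \hom(A,B))| = |\chi_C(w_j' \cdot \hom(A,B))| > t$ for each $j$, so $\chi \in \bigcap_j U_{w_j}$. Compactness yields a bad colouring of $F$, the desired contradiction; hence some finite $C$ satisfies $C \longrightarrow (B)^{A}_{k,t}$, and as $k$ and $B$ were arbitrary, $t_\AAA(A) \le t$.

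I expect the main obstacle to be the compactness step in $(2) \Rightarrow (1)$: choosing the correct compact space and clopen conditions, and, above all, using the factorization part of local finiteness to collapse an arbitrary finite family $w_1, \dots, w_m$ of copies of $B$ in $F$ onto a single finite object $C \in \AAA$ carrying the hypothesized bad colouring, so that the finite intersection property can be verified. The remaining implications are comparatively routine, relying only on universality and on the finiteness of $\hom(A,B)$.
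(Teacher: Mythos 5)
Your proposal is correct and follows essentially the same route as the paper: universality of $F$ for $(1)\Rightarrow(2)$, a product colouring indexed by the (finitely many) candidate $t$-colourings of $\hom(A,B)$ for $(2)\Rightarrow(3)$, the trivial implication $(3)\Rightarrow(2)$, and a Tychonoff compactness argument on $k^{\hom(A,F)}$ for $(2)\Rightarrow(1)$ in which local finiteness amalgamates finitely many copies of $B$ in $F$ into a single $C\in\Ob(\AAA)$ and the bad colouring $\chi_C$ is pulled back along the (mono) morphism $C\to F$. Your phrasing of $(2)\Rightarrow(3)$ via kernels and of the finite intersection property via the clopen sets $U_w$ is a mild streamlining of the paper's closed sets $\Phi_{C,e}$, but the decomposition and the key ingredients are identical.
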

\begin{proof}
  $(1) \Rightarrow (2)$:
  Take any coloring $\chi : \hom(A, F) \to k$, take any $B \in \Ob(\AAA)$ and find a
  $C \in \Ob(\AAA)$ such that $C \longrightarrow (B)^{A}_{k, t}$.
  Let $p : C \to F$ be any morphism (which exists
  because $F$ is universal for $\AAA$). Let $\xi : \hom(A, C) \to k$ be the coloring defined by
  $\xi(f) = \chi(p \cdot f)$. Then there is a $w : B \to C$
  such that $|\xi(w \cdot \hom(A, B))| \le t$. But $\xi(w \cdot \hom(A, B)) =
  \chi(p \cdot w \cdot \hom(A, B))$. Therefore, for $w' = p \cdot w : B \to S$ we have
  $|\chi(w' \cdot \hom(A, B))| \le t$.

  $(2) \Rightarrow (1)$:
  Fix a $B \in \Ob(\AAA)$ and assume that
  for every $C \in \Ob(\AAA)$ there is a coloring $\chi_C : \hom(A, C) \to k$
  with the property that for every $w : B \to C$ we have $|\chi_C (w \cdot \hom(A, B))| > t$.
  Let $X = k^{\hom(A, F)}$. Then $X$ is a compact Hausdorff space with respect to the Tychonoff topology
  (with $k$ discrete). For $C \in \Ob(\AAA)$ and $e : C \to F$ let
  $$
    \Phi_{C, e} = \{ \chi \in X : (\forall w : B \to C) \; |\chi(e \cdot w \cdot \hom(A, B))| > t\}.
  $$

  Let us show that $\Phi_{C, e}$ is a nonempty closed subset of $X$ for every $C \in \Ob(\AAA)$ and every $e : C \to F$.
  Take any $C \in \Ob(\AAA)$ and $e : C \to F$. Since $e \cdot \hom(A, C) \subseteq \hom(A, F)$,
  define $\chi : \hom(A, F) \to k$ by $\chi(e \cdot f) = \chi_C(f)$ for $f \in \hom(A, C)$
  and $\chi(g) = 0$ otherwise. Note that $\chi$ is well defined because $e$ is mono.
  Then for every $w : B \to C$ we have
  $\chi(e \cdot w \cdot \hom(A, B)) = \chi_C(w \cdot \hom(A, B))$.
  So, $|\chi(e \cdot w \cdot \hom(A, B))| = |\chi_C(w \cdot \hom(A, B))| > t$,
  whence $\chi \in \Phi_{C,e}$ and $\Phi_{C,e}$ is nonempty. To show that $\Phi_{C,e}$ is closed note that
  $$
    X \setminus \Phi_{C,e} = \{ \chi \in X : (\exists w : B \to C) \; |\chi(e \cdot w \cdot \hom(A, B))| \le t\}.
  $$
  Take any $\chi^* \in X \setminus \Phi_{C,e}$ and choose a $w^* : B \to C$ such that
  $\chi^*(e \cdot w^* \cdot \hom(A, B)) = \{c^*_0, \ldots, c^*_{s-1}\}$
  for some $s \le t$. Since $A, B \in \Ob(\AAA)$ we have that $\hom(A, B)$ is finite.
  Let $\hom(A, B) = \{f_0, \ldots, f_{n-1}\}$ and
  $$
    U = \{ \xi \in X : (\forall i < n)(\exists j < s) \xi(e \cdot w^* \cdot f_i) = c^*_j\}.
  $$
  Then $U$ is an open set in $X$ and $\chi^* \in U \subseteq X \setminus \Phi_e$. This completes the proof that
  $\Phi_{C,e}$ is a nonempty closed subset of $X$.

  \medskip

  For $B \in \Ob(\AAA)$ which we have fixed above put $\Phi_* = \{\Phi_{B,e} : e \in \hom(B, F)\}$
  and let us show that $\bigcap \Phi_* \ne \0$. Since $\Phi_*$ is a family of closed subsets of a compact Hausdorff space,
  it suffices to show that $\Phi_*$ has the finite intersection property.

  Take any $\Phi_{B,e_0}, \ldots, \Phi_{B,e_{n-1}} \in \Phi_*$. Because $F$ is locally finite
  there exist $D \in \Ob(\AAA)$, $p : D \to F$ and $m_i : B \to D$ such that
  $p \cdot m_i = e_i$, $i < n$.
  Let us show that $\Phi_{D,p} \subseteq \Phi_{B,e_0} \sec \ldots \sec \Phi_{B,e_{n-1}}$.
  Take any $\chi \in \Phi_{D,p}$ and fix an $i < n$.
  Then $|\chi(p \cdot w \cdot \hom(A, B))| > t$ for all $w : B \to D$.
  According to the assumption, for an arbitrary $w' : B \to B$, say $w' = \id_B$, we have
  $|\chi(p \cdot (m_i \cdot w') \cdot \hom(A, B))| > t$. But $p \cdot m_i = e_i$, so
  $|\chi(e_i \cdot w' \cdot \hom(A, B))| > t$, whence $\chi \in \Phi_{B,e_i}$. Thus
  $\0 \ne \Phi_{D,p} \subseteq \Phi_{B,e_i}$, for all~$i < n$.

  Therefore, $\Phi_*$ has the finite intersection property, so $\bigcap \Phi_* \ne \0$. Take any $\chi_0 \in \bigcap \Phi_*$.
  Then for every $e : B \to F$ we have $\chi_0 \in \Phi_e$, so in particular for $w = \id_B$ we have
  $\chi_0(e \cdot \hom(A, B)) > t$.

  $(3) \Rightarrow (2)$: Let $k \ge 2$ and $\chi : \hom(A, F) \to k$ be arbitrary.
  Since $\lambda$ is essential at $B$, there is a a $w \in \hom(B, F)$
  such that $\ker \lambda \subseteq \ker \chi^{(w)}$. Hence,
  $
    |\chi(w \cdot \hom(A, B))| \le t
  $.
  
  $(2) \Rightarrow (3)$: Aiming for a contradiction,
  suppose that (2) holds but that there is a $B \in \Ob(\AAA)$ with $A \toCC B$ such that no
  coloring in $t^{\hom(A, B)}$ is essential at $B$. So, for every $\lambda \in t^{\hom(A, B)}$ there exist
  $k_\lambda \ge 2$ and $\chi_\lambda : \hom(A, F) \to k_\lambda$ such that
  \begin{equation}\label{akpt.eq.IFF}
    \begin{array}{r@{}l}
    (\forall w \in \hom(B, F))&(\exists f, g \in \hom(A, B))\\
                              &(\lambda(f) = \lambda(g) \land \chi_\lambda(w \cdot f) \ne \chi_\lambda(w \cdot g)).
    \end{array}
  \end{equation}
  Note that $\hom(A, B)$ is a finite set due to $(C3)$, so let $t^{\hom(A, F)} = \{\lambda_0, \ldots, \lambda_{n-1}\}$.
  Consider the coloring
  $$
    \chi^* : \hom(A, F) \to \prod_{i < n} k_{\lambda_i}
  $$
  given by
  $$
    \chi^*(h) = (\chi_{\lambda_0}(h), \ldots, \chi_{\lambda_{n-1}}(h)).
  $$
  Because of (2) there is a $w \in \hom(B, F)$ such that $|\chi^*(w \cdot \hom(A, B))| \le t$.
  Let $\chi^*(w \cdot \hom(A, B)) = \{c_0, \ldots, c_{s-1}\}$ for some $s \le t$.
  Now, define $\beta : \hom(A, B) \to t$ by
  $$
    \beta(f) = i \text{ iff } \chi^*(w \cdot f) = c_i.
  $$
  Then $\beta \in t^{\hom(A, F)}$ so $\beta = \lambda_i$ for some $i < n$. By \eqref{akpt.eq.IFF}
  there exist $f, g \in \hom(A, B)$ such that
  $$
    \lambda_i(f) = \lambda_i(g) \text{ and } \chi_{\lambda_i}(w \cdot f) \ne \chi_{\lambda_i}(w \cdot g).
  $$
  From $\lambda_i(f) = \lambda_i(g)$ (that is, $\beta(f) = \beta(g)$) it follows that
  $\chi^*(w \cdot f) = \chi^*(w \cdot g)$ whence, by projecting onto the $i$th coordinate, we get
  $\chi_{\lambda_i}(w \cdot f) = \chi_{\lambda_i}(w \cdot g)$. Contradiction.
\end{proof}

Let $\AAA$ be a full subcategory of $\CCfin$,
let $F \in \Ob(\CC)$ be universal for $\AAA$ and let $A \in \Ob(\AAA)$ be arbitrary.
A coloring $\gamma : \hom(A, F) \to t$, $t \ge 2$, is \emph{essential} if
for every $B \in \Ob(\AAA)$ such that $A \toCC B$ and every $w \in \hom(B, F)$
the coloring $\gamma^{(w)} : \hom(A, B) \to t$ is essential at~$B$.

\begin{LEM}
  Let $\AAA$ be a full subcategory of $\CCfin$ and
  let $F \in \Ob(\CC)$ be universal and locally finite for $\AAA$.
  Let $A \in \Ob(\AAA)$ and $t \ge 2$ be arbitrary.
  Assume that for every $B \in \Ob(\AAA)$ such that $A \toCC B$
  there is a coloring $\lambda_B : \hom(A, B) \to t$ essential at~$B$.
  Then there exists an essential coloring $\gamma : \hom(A, F) \to t$.
\end{LEM}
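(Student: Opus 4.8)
The plan is to obtain $\gamma$ by a compactness argument in the space $X = t^{\hom(A, F)}$ of all $t$-colorings of $\hom(A, F)$, equipped with the Tychonoff topology ($t$ discrete). Since $\hom(A, F)$ is a set and $t$ is finite, $X$ is a compact Hausdorff space, exactly as in the proof of Lemma~\ref{akpt.lem.ramseyF}. For each $B \in \Ob(\AAA)$ with $A \toCC B$ and each $w \in \hom(B, F)$ I would set
$$
  E_{B, w} = \{\gamma \in X : \gamma^{(w)} \text{ is essential at } B\},
$$
so that the essential colorings $\gamma : \hom(A, F) \to t$ are precisely the elements of $\bigcap_{B, w} E_{B,w}$. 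The goal then becomes showing that this intersection is non-empty.

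First I would check that each $E_{B,w}$ is closed, in fact clopen. The induced coloring $\gamma^{(w)}$ depends only on the restriction of $\gamma$ to the finite set $w \cdot \hom(A, B)$ (finite by (C1) and (C3)), and whether $\gamma^{(w)}$ is essential at $B$ depends only on $\ker \gamma^{(w)}$, which ranges over the finitely many equivalence relations on $\hom(A, B)$. Hence $E_{B,w}$ is a finite union of basic cylinders, so it is clopen and in particular closed. By compactness it then suffices to verify that the family $\{E_{B,w}\}$ has the finite intersection property.

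For the finite intersection property, fix pairs $(B_1, w_1), \ldots, (B_n, w_n)$. Iterating the amalgamation clause of local finiteness of $F$ for $\AAA$, I would produce a single $D \in \Ob(\AAA)$, a morphism $r \in \hom(D, F)$ and morphisms $m_i \in \hom(B_i, D)$ with $r \cdot m_i = w_i$ for all $i$. By hypothesis there is a coloring $\lambda_D : \hom(A, D) \to t$ essential at $D$; since $r$ is mono I can define $\gamma$ by $\gamma(r \cdot f) = \lambda_D(f)$ for $f \in \hom(A, D)$ and $\gamma = 0$ elsewhere, just as in Lemma~\ref{akpt.lem.ramseyF}. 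Then $\gamma^{(w_i)} = \lambda_D^{(m_i)}$, and the crux is the sub-claim that pulling an essential coloring back along a morphism preserves essentiality: if $\lambda_D$ is essential at $D$ and $m \in \hom(B, D)$, then $\lambda_D^{(m)}$ is essential at $B$. This follows because, given any $\chi : \hom(A, F) \to k$, essentiality at $D$ yields $v \in \hom(D, F)$ with $\ker \lambda_D \subseteq \ker \chi^{(v)}$, and then $w = v \cdot m$ witnesses $\ker \lambda_D^{(m)} \subseteq \ker \chi^{(w)}$. Hence $\gamma \in \bigcap_i E_{B_i, w_i}$, establishing the finite intersection property, and compactness delivers the desired global $\gamma$.

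The main obstacle is exactly this interplay in the finite intersection step — obtaining the common finite amalgam $D$ inside $F$ via iterated local finiteness, and then transporting essentiality back down from $D$ to each $B_i$ through the pullback sub-claim — whereas the topological part (clopenness of the $E_{B,w}$ and the compactness of $X$) is routine.
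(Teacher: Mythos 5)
Your proof is correct and follows essentially the same route as the paper: the same compact space $t^{\hom(A,F)}$, the same closed sets (the paper's $\Phi_{w,B}$ is exactly your $E_{B,w}$), and the same finite-intersection argument via iterated local finiteness, with your explicit witness $\gamma$ extending $\lambda_D$ along $r$ being precisely the paper's $\beta_{r,D}$ and your pullback sub-claim being the paper's proof that $\Phi_{p,D}\subseteq\bigcap_i\Phi_{e_i,B_i}$. The only (harmless) cosmetic differences are that you observe the $E_{B,w}$ are clopen as finite unions of cylinders rather than showing the complement is open, and you exhibit a common element directly rather than via an inclusion of nonempty sets.
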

\begin{proof}
  The proof follows by a typical compactness argument.
  Let $X = t^{\hom(A, F)}$. (Note that $X$ is a set because of the assumption that all the
  categories we work with are locally small.)
  Then $X$ is a compact Hausdorff space with respect to the Tychonoff topology
  (with $t = \{0, 1, \ldots, t-1\}$ taken as a discrete space).
  For $B \in \Ob(\AAA)$ such that $A \toCC B$ and for $e \in \hom(B, F)$ let
  \begin{align*}
    \Phi_{e, B} = \{ \beta \in X : \; & (\forall k \ge 2)(\forall \chi : \hom(A, F) \to k)\\
                                   & (\exists w \in \hom(B, F)) \ker \beta^{(e)} \subseteq \ker \chi^{(w)}  \}.
  \end{align*}

  To see that $\Phi_{e, B} \ne \0$ given $B$ and $e$ consider $\beta_{e, B} : \hom(A, F) \to t$ defined as follows:
  $$
    \beta_{e, B}(h) = \begin{cases}
      \lambda_B(f), & \text{if } h = e \cdot f \text{ for } f \in \hom(A, B),\\
      0, & \text{otherwise.}
    \end{cases}
  $$
  Then $\beta_{e, B}$ is well defined (because $e$ is mono) and $\beta_{e, B} \in \Phi_{e, B}$ because
  $\lambda_B$ is essential at~$B$ and $\beta_{e, B}^{(e)} = \lambda_B$.

  Next, let us show that each $\Phi_{e, B}$ is closed in $X$. Given $B$ and $e$, take any
  $\beta_0 \in X \setminus \Phi_{e, B}$. Then there exist $k_0 \ge 2$ and $\chi_0 : \hom(A, F) \to k_0$ such that
  \begin{align*}
    (\forall w \in \hom(B, F))&(\exists f, g \in \hom(A, B))\\
                              &(\beta_0^{(e)}(f) = \beta_0^{(e)}(g) \land \chi_0^{(w)}(f) \ne \chi_0^{(w)}(g)).
  \end{align*}
  Recall that $\hom(A, B)$ is finite because $A, B \in \Ob(\CCfin)$ and because of~$(C3)$. Let
  $\hom(A, B) = \{f_1, \ldots, f_n\}$, let $\beta_0(e \cdot f_1) = c_1$, \ldots, $\beta_0(e \cdot f_n) = c_n$
  and let
  $$
    O = \{ \beta \in t^{\hom(A, F)} : \beta(e \cdot f_i) = c_i, 1 \le i \le n \}.
  $$
  Then $O$ is open in $X$ and $\beta_0 \in O \subseteq X \setminus \Phi_{e,B}$. (Note that $\chi_0$ witnesses the
  fact that every $\beta \in O$ belongs to $X \setminus \Phi_{e,B}$.)

  Let $\Phi_* = \{\Phi_{e, B} : B \in \Ob(\AAA), A \toCC B, e \in \hom(B, F)\}$
  and let us show that $\bigcap \Phi_* \ne \0$. Since $\Phi_*$ is a family of closed subsets of a compact Hausdorff space,
  it suffices to show that $\Phi_*$ has the finite intersection property.
  Take any $\Phi_{e_0, B_0}, \ldots, \Phi_{e_{n-1}, B_{n-1}} \in \Phi_*$.
  Because $F$ is locally finite for $\AAA$ there exist $D \in \Ob(\AAA)$, $p \in \hom(D, F)$ and $m_i \in \hom(B_i, D)$ such that
  $p \cdot m_i = e_i$, $i < n$.
  
  Let us show that $\Phi_{p, D} \subseteq \Phi_{e_0, B_0} \sec \ldots \sec \Phi_{e_{n-1}, B_{n-1}}$.
  Take any $\beta \in \Phi_{p, D}$ and fix an $i < n$. To show that $\beta \in \Phi_{e_i, B_i}$ 
  let $k \ge 2$ and $\chi : \hom(A, F) \to k$ be arbitrary. Since $\beta \in \Phi_{p, D}$ there is a
  $w \in \hom(D, F)$ such that $\ker \beta^{(p)} \subseteq \ker \chi^{(w)}$, or, equivalently,
  \begin{equation}\label{akpt.eq.FIP}
    (\forall f', g' \in \hom(A, D)) (\beta(p \cdot f') = \beta(p \cdot g') \Rightarrow \chi(w \cdot f') = \chi(w \cdot g')).
  \end{equation}
  Take any $f, g \in \hom(A, B_i)$ and assume that $\beta(e_i \cdot f) = \beta(e_i \cdot g)$. Then
  $\beta(p \cdot m_i \cdot f) = \beta(p \cdot m_i \cdot g)$ whence
  $\chi(w \cdot m_i \cdot f) = \chi(w \cdot m_i \cdot g)$ by~\eqref{akpt.eq.FIP}.
  Therefore, we have shown that $\ker \beta^{(e_i)} \subseteq \ker \chi^{(w \cdot m_i)}$ which completes the proof
  of $\Phi_{p, D} \subseteq \Phi_{e_i, B_i}$ for an arbitrary $i < n$.

  Therefore, $\Phi_*$ has the finite intersection property so $\bigcap \Phi_* \ne \0$.
  Take any $\gamma \in \bigcap \Phi_*$. Then for every $B \in \Ob(\CCfin)$ such that $A \toCC B$
  and every $e \in \hom(B, F)$ we have that $\gamma \in \Phi_{e, B}$. But $\gamma \in \Phi_{e, B}$ if and
  only if $\gamma^{(e)}$ is essential at~$B$. Therefore, $\gamma$ is essential.
\end{proof}

As an immediate consequence of the above two lemmas we have the following.

\begin{PROP}\label{akpt.prop.glob-fin}
  Let $\CC$ be a locally small category and
  let $\CCfin$ be a full subcategory of $\CC$ such that (C1) -- (C5) hold.
  Let $\AAA$ be a full subcategory of $\CCfin$ and let $F \in \Ob(\CC)$ be universal and locally finite
  for $\AAA$. If $\AAA$ has finite Ramsey degrees then for every $A \in \Ob(\AAA)$
  there exists an essential coloring $\gamma_A : \hom(A, F) \to t_\AAA(A)$.
\end{PROP}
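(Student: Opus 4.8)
The plan is to read the statement as a direct concatenation of the two preceding lemmas, since the substantive compactness work has already been carried out there. Fix $A \in \Ob(\AAA)$ and set $t = t_\AAA(A)$; this is a finite positive integer precisely because $\AAA$ has finite Ramsey degrees. The standing hypotheses match what both lemmas require: $F$ is universal and locally finite for $\AAA$.

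Suppose first that $t \ge 2$. Since trivially $t_\AAA(A) \le t$, I would invoke the implication $(1) \Rightarrow (3)$ of Lemma~\ref{akpt.lem.ramseyF}: for every $B \in \Ob(\AAA)$ with $A \toCC B$ there is a coloring $\lambda_B : \hom(A, B) \to t$ which is essential at $B$. This is exactly the hypothesis of the second of the two lemmas above, so applying that lemma (with the same $A$ and $t$) produces an essential coloring $\gamma_A : \hom(A, F) \to t = t_\AAA(A)$, which is what the proposition asserts. No new estimate is needed; finite Ramsey degrees supply the number $t$, Lemma~\ref{akpt.lem.ramseyF} converts it into colorings that are essential at each individual $B$, and the second lemma globalizes these to a single essential coloring on $\hom(A, F)$.

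The only delicate point, and the step I expect to be the genuine obstacle, is the boundary value $t = 1$: both lemmas and the very notion of an essential coloring are stated under the convention $t \ge 2$, yet here the target codomain $t_\AAA(A)$ is a singleton. In this case the sole coloring $\gamma_A : \hom(A, F) \to 1$ is constant, and I would check essentiality by hand. Unwinding the definitions, essentiality of a constant $\gamma_A$ reduces to the assertion that for each $B$ with $A \toCC B$ every $\chi : \hom(A, F) \to k$ admits a $w \in \hom(B, F)$ with $\chi^{(w)}$ constant, i.e.\ $|\chi(w \cdot \hom(A, B))| = 1$; this is exactly $F \longrightarrow (B)^{A}_{\Boxed< \omega, 1}$. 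I would obtain it by rerunning the argument for $(1) \Rightarrow (2)$ of Lemma~\ref{akpt.lem.ramseyF} with $t = 1$: that argument only uses a witness $C \longrightarrow (B)^{A}_{k, t}$ (available because $t_\AAA(A) = 1$) together with a morphism $C \to F$ coming from universality, and it goes through verbatim for $t = 1$.

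In summary, there is no substantive new content beyond the two lemmas: the whole proof is a chaining of $(1) \Rightarrow (3)$ with the globalization lemma, and the only care required is to notice that the corner case $t_\AAA(A) = 1$, formally excluded from the lemmas' hypotheses, still yields an essential (necessarily constant) coloring into the singleton $t_\AAA(A)$. If one prefers, the proposition may simply be read under the tacit assumption $t_\AAA(A) \ge 2$, in which case only the second paragraph is needed.
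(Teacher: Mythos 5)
Your proposal is correct and is exactly the paper's argument: the paper presents the proposition as an immediate consequence of Lemma~\ref{akpt.lem.ramseyF} (implication $(1)\Rightarrow(3)$) followed by the globalization lemma, with no further proof given. Your additional care over the boundary case $t_\AAA(A)=1$, which the paper's definitions formally exclude, is a sensible refinement rather than a departure, and your resolution of it (constancy plus $F \longrightarrow (B)^{A}_{\Boxed< \omega, 1}$ via the $(1)\Rightarrow(2)$ argument) is sound.
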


\section{Ramsey property and extreme amenability}
\label{akpt.sec.RPEA}

Let $G$ be a topological group. Its \emph{action} on $X$ is a mapping $X \times G \to X : (x, g) \mapsto x^g$
such that $x^1 = x$ and $(x^g)^f = x^{gf}$.
A \emph{$G$-flow} is a continuous action of a topological group $G$
on a topological space $X$. A \emph{subflow} of a $G$-flow $X \times G \to X$
is a restriction $Y \times G \to Y$ of the above action to a closed subspace $Y \subseteq X$.
A $G$-flow $X \times G \to X$ is \emph{minimal} if it has no proper closed subflows.
A minimal $G$-flow $u : X \times G \to X$ is \emph{universal minimal $G$-flow}
if every compact minimal $G$-flow $Z \times G \to Z$ is a factor of~$u$.
It is a well-known fact that for a compact Hausdorff space $X$ there is, up to isomorphism of $G$-flows,
a unique universal minimal $G$-flow, usually denoted by $G \curvearrowright M(G)$.

A topological group $G$ is \emph{extremely amenable}
if every $G$-flow $X \times G \to X$ on a compact Hausdroff space $X$ has a joint fixed point,
that is, there is an $x_0 \in X$ such that $x_0^g = x_0$
for all $g \in G$. Since $\Sym(X)$, the group of all permutations on $X$,
carries naturally the topology of pointwise convergence, permutation groups can
be thought of as topological groups. For example, it was shown in~\cite{Pestov-1998} that $\Aut(\QQ, \Boxed<)$
is extremely amenable while $\Sym(X)$ is not in case $X$ is a countably infinite set.

We shall use the following lemma from \cite{KPT}:

\begin{LEM}\label{akpt.lem.kpt1} \cite[Lemma 4.1]{KPT}
  Let $G$ be a topological group. A $G$-flow $X$ has a joint fixed point
  if and only if for every $n \in \NN$, every continuous $f : X \to \RR^n$, every $\epsilon > 0$ and every
  finite $H \subseteq G$ there is an $x \in X$ such that
  $
    (\forall h \in H) \; \| f(x) - f(x^h) \| \le \epsilon
  $,
  where $\| \cdot \|$ denotes the euclidean norm in $\RR^n$.
\end{LEM}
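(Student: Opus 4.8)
The plan is to treat the two implications separately, assuming (as the surrounding definitions dictate) that $X$ is compact Hausdorff. The forward implication is immediate: if $x_0 \in X$ is a joint fixed point then $x_0^h = x_0$ for every $h \in G$, so for any $n \in \NN$, any continuous $f : X \to \RR^n$, any $\epsilon > 0$ and any finite $H \subseteq G$ the choice $x = x_0$ gives $\|f(x) - f(x^h)\| = \|f(x_0) - f(x_0)\| = 0 \le \epsilon$ for all $h \in H$. All the content is in the converse.

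For the converse I would run a compactness argument. For a continuous $f : X \to \RR^n$, a real $\epsilon > 0$ and a finite $H \subseteq G$ put
$$
  \Phi(f, \epsilon, H) = \{ x \in X : (\forall h \in H)\; \|f(x) - f(x^h)\| \le \epsilon \}.
$$
Continuity of the action together with continuity of the norm makes each condition $\|f(x) - f(x^h)\| \le \epsilon$ define a closed subset of $X$, so $\Phi(f, \epsilon, H)$ is closed as a finite intersection of closed sets; by hypothesis it is also nonempty. The first key step is to verify that the family of all these sets has the finite intersection property. Given $\Phi(f_i, \epsilon_i, H_i)$ for $i < m$, I would stack the maps into $f = (f_0, \ldots, f_{m-1})$ (landing in $\RR^{n_0 + \cdots + n_{m-1}}$) and set $\epsilon = \min_i \epsilon_i$ and $H = H_0 \cup \cdots \cup H_{m-1}$. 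Because the full euclidean norm dominates the norm of each coordinate block, $\Phi(f, \epsilon, H) \subseteq \bigcap_{i < m} \Phi(f_i, \epsilon_i, H_i)$, and the left-hand side is nonempty by hypothesis. Compactness of $X$ then produces a point $x_0$ lying in \emph{every} set $\Phi(f, \epsilon, H)$.

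The remaining step, which I expect to be the only delicate point, is to upgrade this $x_0$ from ``approximately fixed by everything'' to an honest joint fixed point; here both topological hypotheses on $X$ enter. Suppose $x_0^g \ne x_0$ for some $g \in G$. Since a compact Hausdorff space is completely regular, there is a continuous $f : X \to \RR$ with $f(x_0) \ne f(x_0^g)$; writing $\delta = |f(x_0) - f(x_0^g)| > 0$ and taking $\epsilon = \delta/2$, $H = \{g\}$, the membership $x_0 \in \Phi(f, \delta/2, \{g\})$ would force $\delta \le \delta/2$, a contradiction. Therefore $x_0^g = x_0$ for every $g \in G$, and $x_0$ is the sought joint fixed point.
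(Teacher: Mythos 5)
Your argument is correct, but note that the paper itself offers no proof of this statement: it is imported verbatim as Lemma~4.1 of \cite{KPT}, so there is nothing in the source to compare against line by line. Your reading that $X$ is compact Hausdorff is the right one --- the lemma is only applied to such flows (extreme amenability is defined via compact Hausdorff flows, and the proof of Theorem~\ref{akpt.thm.extr-amen} explicitly uses compactness of $X$), and without compactness the ``if'' direction is false --- so flagging that hypothesis explicitly is a genuine improvement on the statement as printed. The proof itself is the standard one and is complete: the sets $\Phi(f,\epsilon,H)$ are closed and nonempty, the stacking trick $f=(f_0,\dots,f_{m-1})$ with $\epsilon=\min_i\epsilon_i$ and $H=\bigcup_i H_i$ gives the finite intersection property, compactness yields a common point $x_0$, and complete regularity of a compact Hausdorff space converts ``$x_0$ is $\epsilon$-fixed for every continuous real test function'' into ``$x_0$ is fixed.'' No gaps.
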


In the remainder of the section we fix a locally small category $\CC$ and its full subcategory $\CCfin$
satisfying (C1)--(C5).

\begin{LEM}\label{akpt.lem.top-AutF}
  Let $F$ be a locally finite object. For $A \in \Ob(\CCfin)$ such that $A \toCC F$ and $e_1, e_2 \in \hom(A, F)$ let
  $
    N_F(e_1, e_2) = \{ f \in \Aut(F) : f \cdot e_1 = e_2 \}
  $.
  Then
  $$
    M_F = \{ N_F(e_1, e_2) : A \in \Ob(\CCfin), \; A \toCC F \text{ and } e_1, e_2 \in \hom(A, F) \}.
  $$
  is a base of a topology $\tau_F$ on $\Aut(F)$.
\end{LEM}
\begin{proof}
  Take any $A, B \in \Ob(\CCfin)$, any $e_1, e_2 \in \hom(A, F)$,
  $f_1, f_2 \in \hom(B, F)$ and any $g \in N_F(e_1, e_2) \sec N_F(f_1, f_2)$. Then
  $g \cdot e_1 = e_2$ and $g \cdot f_1 = f_2$. Since $F$ is locally finite
  there are a $D \in \Ob(\CCfin)$, $r \in \hom(D, F)$,
  $p \in \hom(A, D)$ and $q \in \hom(B, D)$ such that $r \cdot p = e_2$ and $r \cdot q = f_2$.
  Moreover, since $g \cdot e_1 = e_2$ and $g \cdot f_1 = f_2$,
  there is an $s \in \hom(D, F)$ such that the diagram below commutes
  \begin{center}
      \begin{tikzcd}
        D \arrow[rr, "r"] \arrow[rrrr, bend left=20, "s"] & & F & & F \arrow[ll, "g"'] \\
        & A \arrow[ul, "p"] \arrow[ur, "e_2" near end] \arrow[urrr, "e_1"' near end] & & B \arrow[ulll, "q" near end] \arrow[ul, "f_2"' near end]
            \arrow[ur, "f_1"']
      \end{tikzcd}
  \end{center}
  In particular, $g \cdot s = r$, so $g \in N_F(s, r)$. Let us show that $N_F(s, r) \subseteq N_F(e_1, e_2) \sec
  N_F(f_1, f_2)$. Take any $h \in N_F(s, r)$. Then
  $h \cdot s = r$, whence $h \cdot s \cdot p  = r \cdot p$, so $h \cdot e_1 = e_2$.
  Analogously $h \cdot f_1 = f_2$, so $h \in N_F(e_1, e_2) \sec N_F(f_1, f_2)$.
\end{proof}

If this construction is performed in $\CC^\op$ we end up with the dual topology on $\Aut(F)$ which denote by $\tau_F^\op$.

\begin{LEM}
  Let $F \in \Ob(\CC)$ be a locally finite object
  whose automorphisms are finitely separated. Then
  $\Aut(F)$ endowed with the topology $\tau_F$ is a Hausdorff topological group.
\end{LEM}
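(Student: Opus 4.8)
The plan is to verify the two defining properties of a Hausdorff topological group separately, relying throughout on the fact established in Lemma~\ref{akpt.lem.top-AutF} that $M_F$ is a base for $\tau_F$; this is where local finiteness of $F$ enters. The single observation that drives every computation is that the basic sets $N_F(e_1, e_2)$ encode the one relation $h \cdot e_1 = e_2$, and that such relations compose well: if $h \cdot e_1 = e_2$ and $h' \cdot e_2 = e_3$ then $(h' \cdot h) \cdot e_1 = e_3$, while $h \cdot e_1 = e_2$ is equivalent to $h^{-1} \cdot e_2 = e_1$.

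First I would settle the Hausdorff property, and this is the only place where the hypothesis that the automorphisms of $F$ are finitely separated is used. Given $f \ne g$ in $\Aut(F)$, finite separation yields an $A \in \Ob(\CCfin)$ and an $e \in \hom(A, F)$ with $f \cdot e \ne g \cdot e$. Then $f \in N_F(e, f \cdot e)$ and $g \in N_F(e, g \cdot e)$, and these two basic open sets are disjoint: any common element $h$ would satisfy $h \cdot e = f \cdot e$ and $h \cdot e = g \cdot e$, forcing $f \cdot e = g \cdot e$, a contradiction. (Here $A \toCC F$ holds automatically, since $\hom(A, F) \ne \varnothing$.)

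Next I would prove continuity of inversion $h \mapsto h^{-1}$. It suffices to check that preimages of basic open sets are open, so fix $N_F(e_1, e_2)$; its preimage under inversion is exactly $N_F(e_2, e_1)$, because $h^{-1} \cdot e_1 = e_2$ is equivalent to $h \cdot e_2 = e_1$, and this is a basic open set. For continuity of multiplication $(f, g) \mapsto f \cdot g$, I fix a point $f \cdot g$ and a basic neighborhood $N_F(e_1, e_2)$ of it, so that $f \cdot (g \cdot e_1) = e_2$. Setting $e_3 = g \cdot e_1 \in \hom(A, F)$, the product of the basic open sets $N_F(e_3, e_2) \ni f$ and $N_F(e_1, e_3) \ni g$ maps into $N_F(e_1, e_2)$: for $f' \in N_F(e_3, e_2)$ and $g' \in N_F(e_1, e_3)$ we get $(f' \cdot g') \cdot e_1 = f' \cdot (g' \cdot e_1) = f' \cdot e_3 = e_2$.

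I do not expect a genuine obstacle here: once Lemma~\ref{akpt.lem.top-AutF} guarantees that $M_F$ is a base, everything reduces to the elementary algebra of composing the relations defining the neighborhoods. The only points requiring care are bookkeeping --- checking that all the morphisms produced (such as $f \cdot e$ and $g \cdot e_1$) again lie in the appropriate $\hom(A, F)$, so that the sets invoked are genuinely members of the base $M_F$ --- and recognizing that finite separation is precisely the hypothesis that delivers the Hausdorff separation, with nothing further needed for the group-operation continuity.
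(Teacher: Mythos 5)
Your proof is correct and follows essentially the same route as the paper: the Hausdorff separation via the finitely-separated hypothesis, inversion by the observation that the preimage of $N_F(e_1,e_2)$ is $N_F(e_2,e_1)$, and continuity of multiplication via the neighborhood $N_F(g\cdot e_1, e_2)\times N_F(e_1, g\cdot e_1)$ are exactly the paper's argument. No issues.
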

\begin{proof}
  Take any $f, g \in \Aut(F)$ such that $f \ne g$. Then there is an $A \in \Ob(\CCfin)$ and
  an $e \in \hom(A, F)$ such that $f \cdot e \ne g \cdot e$ because the automorphisms
  of $F$ are finitely separated.
  Clearly, $f \in N_F(e, f \cdot e)$ and $g \in N_F(e, g \cdot e)$. To show that
  $N_F(e, f \cdot e) \sec N_F(e, g \cdot e) = \0$, assume that there is an $h \in N_F(e, f \cdot e) \sec N_F(e, g \cdot e)$.
  Then $h \cdot e = f \cdot e$ and $h \cdot e = g \cdot e$ whence $f \cdot e = g \cdot e$ -- contradiction.
  So, $\Aut(F)$ is a Hausdorff topological space.

  It is easy to show that $\mathstrut^{-1}$ is a continuous map as the inverse image of $N_F(e_1, e_2)$ under $\mathstrut^{-1}$
  is $N_F(e_2, e_1)$. Let us show that the composition $\cdot$ is continuous. The inverse image of
  a basic open set $N_F(e_1, e_2)$ under $\cdot$ is
  $$
    Y_{e_1, e_2} = \{(f, g) \in G^2 : f \cdot g \in N_F(e_1, e_2)\} = \{(f, g) \in G^2 : f \cdot g \cdot e_1 = e_2\}.
  $$
  Then an easy calculation shows that for every $(f_0, g_0) \in Y_{e_1, e_2}$ we have
  $$
    (f_0, g_0) \in N_F(g_0 \cdot e_1, e_2) \times N_F(e_1, g_0 \cdot e_1) \subseteq Y_{e_1, e_2}.
  $$
  Therefore, the composition $\cdot$ is continuous.
\end{proof}

Our intention now is to show that for every homogeneous locally finite object $F$ which is universal for $\CCfin$
we have that $\Aut(F)$ endowed with the topology $\tau_F$
is extremely amenable if and only if $\CCfin$ has the Ramsey property.
Let $G = \Aut(F)$. Since $G_{(e)} = N_F(e,e)$ is an open subgroup of $G$, we have that
$
  \{ G_{(e)} : A \in \Ob(\CCfin), \, e \in \hom(A, F) \}
$
is a neighborhood basis of the identity which consists of open subgroups of~$G$.

\begin{LEM}\label{akpt.lem.const}
  Let $F$ be a homogeneous locally finite object in $\CC$ which is universal for $\CCfin$.
  Assume that $\CCfin$ has the Ramsey property and let $G = \Aut(F)$.
  Fix a $k \ge 2$, an $A \in \Ob(\CCfin)$, an $e \in \hom(A, F)$ and a finite $H \subseteq G$.
  Then for every $k$-coloring $\overline f : G \to k$ which is constant on elements of
  $G / G_{(e)}$ there exists a $g \in G$ such that $\overline f$ is constant on $g \cdot H$.
\end{LEM}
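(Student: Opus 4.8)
The plan is to translate the coloring $\overline f$ of the group $G = \Aut(F)$ into a coloring of a hom-set in $\CC$ and then feed it to the Ramsey property of $\CCfin$. The starting observation is that $G_{(e)} = N_F(e, e)$ is exactly the stabilizer of $e$ under the (left) action $g \cdot e$ of $\Aut(F)$ on $\hom(A, F)$, so $g \cdot e = g' \cdot e$ holds precisely when $g$ and $g'$ lie in the same left coset of $G_{(e)}$; and since $F$ is homogeneous for $\CCfin$, the orbit map $g \mapsto g \cdot e$ is onto $\hom(A, F)$. Consequently a $k$-coloring $\overline f$ that is constant on the elements of $G / G_{(e)}$ descends to a well-defined coloring $\chi : \hom(A, F) \to k$ with $\chi(g \cdot e) = \overline f(g)$ (constancy on cosets gives well-definedness, while homogeneity guarantees $\chi$ is defined on all of $\hom(A, F)$). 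Under this dictionary the goal ``$\overline f$ is constant on $g \cdot H$'' becomes ``$\chi$ is constant on $\{ g \cdot (h \cdot e) : h \in H \}$'', and it suffices to produce a single $g \in \Aut(F)$ realizing this (the case $H = \0$ being trivial).

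Next I would gather the finitely many morphisms $h \cdot e \in \hom(A, F)$, $h \in H$, into one finite template. Applying the amalgamation clause in the definition of local finiteness inductively along this finite list yields a $B \in \Ob(\CCfin)$, a morphism $r \in \hom(B, F)$, and morphisms $\iota_h \in \hom(A, B)$ with $r \cdot \iota_h = h \cdot e$ for all $h \in H$; in particular $A \toCC B$. The Ramsey property of $\CCfin$ then supplies a $C \in \Ob(\CCfin)$ with $C \longrightarrow (B)^{A}_{k}$, while universality of $F$ gives some $c \in \hom(C, F)$. Pulling $\chi$ back along $c$, I set $\xi(q) = \chi(c \cdot q)$ for $q \in \hom(A, C)$ and apply $C \longrightarrow (B)^{A}_{k}$ to obtain a $w \in \hom(B, C)$ on which $\xi$ is constant, i.e.\ $\chi(c \cdot w \cdot \hom(A, B))$ is a single color.

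Finally I would transport this back to $F$ using homogeneity. The two morphisms $r$ and $c \cdot w$ both lie in $\hom(B, F)$, so homogeneity for $\CCfin$ furnishes a $g \in \Aut(F)$ with $g \cdot r = c \cdot w$. Then, for every $h \in H$,
$$
  \overline f(g h) = \chi\big((g h) \cdot e\big) = \chi(g \cdot r \cdot \iota_h) = \chi(c \cdot w \cdot \iota_h),
$$
and as $\iota_h \in \hom(A, B)$ the value on the right is the single element of $\chi(c \cdot w \cdot \hom(A, B))$, hence independent of $h$; thus $\overline f$ is constant on $g \cdot H$. I expect the only real difficulty to be conceptual bookkeeping rather than computation: setting up the correspondence between left cosets of $G_{(e)}$ and the $\Aut(F)$-orbit of $e$, and, above all, recognizing that homogeneity is what upgrades ``there is a monochromatic copy of $B$ somewhere inside $F$'' (produced by the single application of $C \longrightarrow (B)^{A}_{k}$) into ``some left translate $g \cdot H$ is monochromatic''. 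The hypotheses enter cleanly: (C1) that morphisms are mono underlies well-definedness of the pulled-back colorings, universality produces $c$, local finiteness produces $B$, and homogeneity produces both $\chi$ and the final $g$.
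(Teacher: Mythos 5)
Your proposal is correct and follows essentially the same route as the paper: translate $\overline f$ to a coloring of $\hom(A,F)$ via the orbit map $g \mapsto g \cdot e$, use local finiteness to package the morphisms $h \cdot e$ into a single $B \in \Ob(\CCfin)$, find a $w \in \hom(B,F)$ on which the coloring is constant, and use homogeneity to convert that into a left translate of $H$. The only cosmetic difference is that the paper obtains the monochromatic $w \in \hom(B,F)$ by citing Lemma~\ref{akpt.lem.ramseyF} (the equivalence $(1)\Leftrightarrow(2)$), whereas you re-derive that step inline via $C \longrightarrow (B)^A_k$ and universality of $F$.
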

\begin{proof}
  Define a mapping $\psi : G \to \hom(A, F)$ by $\psi(g) = g \cdot e$.
  Then $\psi$ is a surjection because $F$ is homogeneous (take any $e' \in \hom(A, F)$;
  the homogeneity of $F$ yields that there is a $g_0 \in G$ such that $g_0 \cdot e = e'$, so
  $\psi(g_0) = g_0 \cdot e = e'$) and the classes of $\ker \psi$ are $G / G_{(e)}$. Since $\overline f$
  is constant on elements of $G / G_{(e)}$, the $k$-coloring $f : \hom(A, F) \to k$ given by
  $f(g \cdot e) = \overline f(g)$ is well defined.

  Let $H = \{h_0, \ldots, h_{n-1}\}$.
  Because $F$ is locally finite there exist $B \in \Ob(\CCfin)$, $p \in \hom(B, F)$ and $m_i \in \hom(A, B)$ such that
  $p \cdot m_i = h_i \cdot e$, $i < n$.
  \begin{center}
    \begin{tikzcd}
      A \arrow[r, "m_i"] \arrow[rr, bend right=20, "h_i \cdot e"'] & B \arrow[r, "p"] & F
    \end{tikzcd}
  \end{center}
  Since $\CCfin$ has the Ramsey property, Lemma~\ref{akpt.lem.ramseyF} yields that
  there is a $w \in \hom(B, F)$ such that $|f(w \cdot \hom(A, B))| = 1$.
  But $\{m_i : i < n\} \subseteq \hom(A, B)$, so
  $|f(\{w \cdot m_i : i < n\})| = 1$.

  Since $F$ is homogeneous, there is a $g \in G$ such that $g \cdot p = w$.
  \begin{center}
    \begin{tikzcd}
      A \arrow[r, "m_i"] \arrow[rr, bend right=20, "h_i \cdot e"'] & B \arrow[r, "p"] \arrow[rr, bend left=30, "w"] & F & F
    \end{tikzcd}
  \end{center}
  Having in mind that $w \cdot m_i = g \cdot p \cdot m_i = g \cdot h_i \cdot e$
  we conclude that $|f(\{g \cdot h_i \cdot e : i < n\})| = 1$,
  whence $|\overline f(\{g \cdot h_i : i < n\})| = 1$. Therefore, $\overline f$ is
  constant on $g \cdot H$.
\end{proof}

\begin{THM}\label{akpt.thm.extr-amen}
  Let $\CC$ be a locally small category and
  let $\CCfin$ be a full subcategory of $\CC$ such that (C1) -- (C5) hold.
  Let $F$ be a homogeneous locally finite object in $\CC$ which is universal for $\CCfin$
  and whose automorphisms are finitely separated.
  Then $\Aut(F)$ endowed with the topology $\tau_F$
  is extremely amenable if and only if $\CCfin$ has the Ramsey property.
\end{THM}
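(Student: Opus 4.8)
The plan is to prove the two implications separately: for ``Ramsey property $\Rightarrow$ extreme amenability'' I would feed the fixed-point criterion of Lemma~\ref{akpt.lem.kpt1} into the colouring statement of Lemma~\ref{akpt.lem.const}, while for the converse I would run the standard orbit-closure argument on the space of colourings and then finitize via the compactness argument of Lemma~\ref{akpt.lem.ramseyF}. Throughout write $G = \Aut(F)$ with the topology $\tau_F$ and recall that $\{G_{(e)} : A \in \Ob(\CCfin),\ e \in \hom(A, F)\}$, with $G_{(e)} = N_F(e,e)$, is a neighbourhood basis of the identity by open subgroups, that the fibres of $g \mapsto g \cdot e$ are exactly the left cosets $g\,G_{(e)}$, that homogeneity of $F$ makes $G$ act transitively on each $\hom(A, F)$, and that $F$ is universal and locally finite for $\AAA = \CCfin$, so Lemma~\ref{akpt.lem.ramseyF} is available.

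Assume first that $\CCfin$ has the Ramsey property. Take an arbitrary $G$-flow on a compact Hausdorff $X$ and verify the criterion of Lemma~\ref{akpt.lem.kpt1}: fix $n \in \NN$, a continuous $\phi : X \to \RR^n$, an $\epsilon > 0$ and a finite $H \subseteq G$, enlarging $H$ so that $\id \in H$. Fixing a base point $x_0$ and setting $\Phi(g) = \phi(x_0^g)$, the goal is a single $g$ with $\|\Phi(g) - \Phi(gh)\| \le \epsilon$ for all $h \in H$, for then $x = x_0^g$ satisfies $\phi(x^h) = \Phi(gh)$. The decisive finitization is: since the action $X \times G \to X$ is continuous and $X$ is compact, the tube lemma yields a neighbourhood $V$ of $\id$ with $\|\phi(y^v) - \phi(y)\| < \epsilon/3$ for all $y \in X$, $v \in V$; choosing $e$ with $G_{(e)} \subseteq V$, the function $\Phi$ oscillates by less than $\epsilon/3$ on every left coset $g\,G_{(e)}$. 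Partitioning the compact set $\phi(X)$ into finitely many pieces of diameter $< \epsilon/3$ gives a $k$-colouring $c$, and colouring each coset by $c(\Phi(\cdot))$ of a chosen representative defines $\overline\chi : G \to k$ which is constant on the elements of $G / G_{(e)}$. Now Lemma~\ref{akpt.lem.const} supplies a $g$ on which $\overline\chi$ is constant on $g \cdot H$; chaining the three $\epsilon/3$-estimates (oscillation on the coset of $g$, the diameter of the common colour class, and oscillation on the coset of $gh$) gives $\|\Phi(g) - \Phi(gh)\| \le \epsilon$ for all $h \in H$, as required.

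For the converse, assume $G$ is extremely amenable. It suffices to show, for arbitrary $A \toCC B$ in $\CCfin$ and $k \ge 2$, that $F \longrightarrow (B)^{A}_{\Boxed< \omega, 1}$, since a finite $C$ with $C \longrightarrow (B)^{A}_{k}$ then follows from the compactness argument in the proof of Lemma~\ref{akpt.lem.ramseyF} (which goes through verbatim at $t = 1$), and ranging over all $A, B, k$ this is exactly the Ramsey property of $\CCfin$. So fix a colouring $\chi : \hom(A, F) \to k$ and let $G$ act on the compact Hausdorff space $X = k^{\hom(A, F)}$ by the right action $(\chi')^g(e) = \chi'(g \cdot e)$; this action is continuous because each map $g \mapsto g \cdot e$ is locally constant on the left cosets of the open subgroup $G_{(e)}$. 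The orbit closure $Y = \overline{\chi^G}$ is a compact subflow, so extreme amenability produces a fixed point $\chi_\infty \in Y$; a fixed point satisfies $\chi_\infty(g \cdot e) = \chi_\infty(e)$ for all $g$, hence is constant (value $c_0$) by transitivity of $G$ on $\hom(A, F)$. Finally pick any $b \in \hom(B, F)$ (which exists since $F$ is universal); as $\chi_\infty$ lies in the orbit closure, approximating it on the finite coordinate set $\{b \cdot f : f \in \hom(A, B)\}$ gives a $g$ with $\chi(g \cdot b \cdot f) = c_0$ for all $f \in \hom(A, B)$, so $w = g \cdot b \in \hom(B, F)$ witnesses $|\chi(w \cdot \hom(A, B))| = 1$.

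The step I expect to be the main obstacle is the finitization in the first implication. The input from Lemma~\ref{akpt.lem.kpt1} is genuinely analytic -- a continuous $\RR^n$-valued observable on a compact flow -- whereas Lemma~\ref{akpt.lem.const} only discusses finite colourings that are constant on a single quotient $G/G_{(e)}$. Bridging the gap is precisely the tube-lemma uniform-continuity estimate, which pins down one open subgroup $G_{(e)}$ on whose cosets the observable barely moves, followed by the careful three-way $\epsilon/3$ bookkeeping that returns the conclusion inside the prescribed tolerance $\epsilon$. The remaining ingredients -- transitivity from homogeneity, continuity of the action on the space of colourings, and the passage from $F$ to a finite $C$ -- are either immediate from the quoted lemmas or routine verifications.
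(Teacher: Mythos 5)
Your proposal is correct and follows essentially the same route as the paper: the forward implication via the orbit closure of $\chi$ in $k^{\hom(A,F)}$, a fixed point made constant by homogeneity, and approximation on the finite set $b \cdot \hom(A,B)$ (invoking Lemma~\ref{akpt.lem.ramseyF} to pass from $F$ to a finite $C$); the reverse implication via Lemma~\ref{akpt.lem.kpt1}, the tube lemma to locate an open subgroup $G_{(e)}$ on whose cosets the observable oscillates by less than $\epsilon/3$, a finite partition of $f(X)$, Lemma~\ref{akpt.lem.const}, and the three-way $\epsilon/3$ estimate. The only deviations are cosmetic (your action $\chi^g(e) = \chi(g \cdot e)$ versus the paper's $\chi^g(e) = \chi(g^{-1} \cdot e)$, and your explicit remark that the compactness argument of Lemma~\ref{akpt.lem.ramseyF} works at $t=1$, a point the paper uses tacitly).
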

\begin{proof}
  Let $G = \Aut(F)$.

  $(\Rightarrow)$:
  By Lemma~\ref{akpt.lem.ramseyF} it suffices to show that $F \longrightarrow (B)^{A}_k$ for every integer $k \ge 2$ and
  all $A, B \in \Ob(\CCfin)$ such that $A \toCC B$.

  Fix a $k \ge 2$ and $A, B \in \Ob(\CCfin)$ such that $A \toCC B$
  and let $\chi : \hom(A, F) \to k$ be an arbitrary coloring.
  Note that $X = k^{\hom(A, F)}$ is a compact Hausdorff space with respect to the Tychonoff topology (with $k$ discrete).
  Let $G$ act on $X$ by $(\xi, g) \mapsto \xi^g$, where
  $\xi^g(e) = \xi(g^{-1} \cdot e)$, for all $e \in \hom(A, F)$. Clearly, this action is continuous.
  Then $Y = \overline{\{\chi^g : g \in G\}}$ is compact and $G$-invariant. Therefore, there is a $\chi_0 \in Y$
  such that $\chi_0^g = \chi_0$ for all $g \in G$, whence follows that $\chi_0$ is constant
  (take $e_1, e_2 \in \hom(A, F)$; since $F$ is homogeneous there is a $g \in G$ such that
  $g \cdot e_1 = e_2$; then $\chi_0(e_1) = \chi_0(g^{-1} \cdot e_2) = \chi_0^g(e_2) = \chi_0(e_2)$).

  Since $A, B \in \Ob(\CCfin)$ we have that $H = \hom(A, B)$ is finite by~(C3).
  Let $s \in \hom(B, F)$ be a morphism (which exists because $B \in \Ob(\CCfin)$). Since
  $\chi_0 \in Y$, there is a $g \in G$ such that $\chi_0|_{s \cdot H} = \chi^g|_{s \cdot H}$.
  As $\chi_0$ is constant, $\chi^g$ is constant on $s \cdot H$. In other words,
  $\chi^g(s \cdot e_1) = \chi^g(s \cdot e_2)$ for all $e_1, e_2 \in H$, or, equivalently,
  $\chi(g^{-1} \cdot s \cdot e_1) = \chi(g^{-1} \cdot s \cdot e_2)$ for all $e_1, e_2 \in H$.
  Therefore, $|\chi(w \cdot \hom(A, B))| = 1$ for $w = g^{-1} \cdot s \in \hom(B, F)$.

  $(\Leftarrow)$:
  Assume that $\CCfin$ has the Ramsey property. Take any $G$-flow $X$ and let us show that
  it has a joint fixed point.
  By Lemma~\ref{akpt.lem.kpt1} it suffices to show that
  for every $n \in \NN$, every continuous $f : X \to \RR^n$, every $\epsilon > 0$ and every
  finite $H \subseteq G$ there is an $x \in X$ such that
  $
    (\forall h \in H) \; \| f(x) - f(x^h) \| \le \epsilon
  $,
  where $\| \cdot \|$ denotes the euclidean norm in $\RR^n$.

  Take any $n \in \NN$, continuous $f : X \to \RR^n$, $\epsilon > 0$ and
  finite $H \subseteq G$.
  The mapping $\phi : X \times G \to [0, +\infty) : (x, g) \mapsto \| f(x) - f(x^g)\|$ is continuous
  as a composition of continuous mappings,
  so $W = \phi^{-1}\big([0, \epsilon/3)\big)$ is open. Clearly, $X \times \{\id_F\} \subseteq W$.
  Since $X$ is compact and $W$ open, there exists an open $V \subseteq G$ such that
  $X \times \{\id_F\} \subseteq X \times V \subseteq W$. Since
  $\{ G_{(e)} : A \in \Ob(\CCfin), \, e \in \hom(A, F) \}$
  is a neighborhood basis of the identity, we have $G_{(e)} \subseteq V$ for some $e \in \hom(A, F)$
  where $A \in \Ob(\CCfin)$. Therefore, $X \times G_{(e)} \subseteq W$, or, equivalently,
  \begin{equation}\label{akpt.eq.ss}
    (\forall x \in X) (\forall g \in G_{(e)}) \; \| f(x) - f(x^g)\| < \epsilon/3.
  \end{equation}

  Let us partition $f(X) \subseteq \RR^n$ into finitely many sets $A_0, A_1, \ldots, A_{k-1}$
  so that $\diam(A_i) \le \epsilon/3$ for all $i \in k$. Fix an $x_0 \in X$ and for $i \in k$ let
  $$
    W_i = \{ g \in G : f(x_0^g) \in A_i \} \text{ and } V_i = \{g \cdot G_{(e)} : g \in W_i \}.
  $$
  Clearly, $\{ W_i : i \in k \}$ is a partition of $G$ and $\bigcup_{i \in k} V_i = G / G_{(e)}$.
  Note that $\{ V_i : i \in k \}$ is not necessarily a partition of $G / G_{(e)}$ as it may happen that
  $V_i \sec V_j \ne \0$ for some $i \ne j$. Define a coloring $c : G / G_{(e)} \to k$ as follows:
  \begin{align*}
    &c(V_0) = \{0\},\\
    &c(V_1 \setminus V_0) = \{1\},\\
    &c(V_2 \setminus (V_0 \union V_1)) = \{2\},\\
    & \quad \vdots \\
    &c(V_{k-1} \setminus (V_0 \union \ldots \union V_{k-2})) = \{k-1\},
  \end{align*}
  and then extend it to $\overline c : G \to k$ by letting
  $$
    \overline c(g) = c(g \cdot G_{(e)}).
  $$
  Since $\overline c$ is constant on elements of $G / G_{(e)}$, Lemma~\ref{akpt.lem.const} gives us that
  there is a $q \in G$ and a $p \in k$ such that $\overline c \big(q \cdot (H \union \{\id_F\})\big) = \{p\}$.
  Let us show that $x = x_0^q$ is the element of $X$ we are looking for.

  Take any $h \in H$. Then $\overline c(q \cdot h) = \overline c(q) = p$ whence $c(q \cdot h \cdot G_{(e)}) =
  c(q \cdot G_{(e)}) = p$. In other words, $q \cdot h \cdot G_{(e)} \in V_p$ and $q \cdot G_{(e)} \in V_p$,
  so there exist $u, v \in W_p$ such that $q \cdot h \cdot G_{(e)} = u \cdot G_{(e)}$ and
  $q \cdot G_{(e)} = v \cdot G_{(e)}$. Consequently, $q \cdot h \cdot s = u$ and $q \cdot t = v$ for some
  $s, t \in G_{(e)}$. Then, by definition of $W_p$, we have that
  $f(x^{h \cdot s}) = f(x_0^{q \cdot h \cdot s}) = f(x_0^u) \in A_p$ and
  $f(x^{t}) = f(x_0^{q \cdot t}) = f(x_0^v) \in A_p$, so
  \begin{equation}\label{akpt.eq.ss2}
    \|f(x^{t}) - f(x^{h \cdot s})\| \le \diam(A_p) \le \epsilon / 3.
  \end{equation}
  On the other hand \eqref{akpt.eq.ss} implies
  \begin{equation}\label{akpt.eq.ss3}
    \|f(x) - f(x^{t})\| \le \epsilon / 3 \text{\ \ and\ \ } \|f(x^{h}) - f(x^{h \cdot s})\| \le \epsilon / 3.
  \end{equation}
  because $s, t \in G_{(e)}$. Therefore,
  \begin{align*}
    \|f(x) - f(x^{h})\| \le & \|f(x) - f(x^{t})\| + \|f(x^{t}) - f(x^{h \cdot s})\| + \\
                                   & + \|f(x^{h \cdot s}) - f(x^{h})\| \le \epsilon/3 + \epsilon/3 + \epsilon/3
  \end{align*}
  using \eqref{akpt.eq.ss2} and \eqref{akpt.eq.ss3}.
\end{proof}

\noindent
As an immediate consequence of the Duality Principle we now have:

\begin{COR}
  Let $\CC$ be a locally small category and
  let $\CCfin$ be a full subcategory of $\CC$ such that (C1)$^\op$ -- (C5)$^\op$ hold.
  Let $F$ be a projectively homogeneous, projectively locally finite object in $\CC$ which is projectively universal for $\CCfin$
  and whose automorphisms are projectively finitely separated.
  Then $\Aut(F)$ endowed with the dual topology $\tau_F^\op$
  is extremely amenable if and only if $\CCfin$ has the dual Ramsey property.
\end{COR}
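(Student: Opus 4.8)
The plan is to obtain this statement from Theorem~\ref{akpt.thm.extr-amen} purely by the Duality Principle, so that no genuinely new argument is needed beyond unwinding definitions. The key observation is that Theorem~\ref{akpt.thm.extr-amen} is a universally quantified assertion: it holds for \emph{every} locally small category paired with a full subcategory satisfying (C1)--(C5), and each of its hypotheses and its conclusion is determined by the categorical data $(\CC, \CCfin, F)$ through explicit constructions -- the constraints (C1)--(C5), homogeneity, local finiteness, universality, finite separation of automorphisms, the topology $\tau_F$ of Lemma~\ref{akpt.lem.top-AutF}, and the Ramsey property. Since nothing in the statement privileges the direction of the arrows, I may instantiate the theorem at the pair $(\CC^\op, \CCfin^\op)$.

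First I would verify that $(\CC^\op, \CCfin^\op)$ meets the hypotheses. The opposite of a locally small category is locally small, and by assumption (C1)$^\op$--(C5)$^\op$ hold, which is exactly the assertion that $\CCfin^\op$ is a full subcategory of $\CC^\op$ satisfying (C1)--(C5). By the definitions recorded in Section~\ref{akpt.sec.prelim}, the remaining hypotheses translate term by term: that $F$ is projectively homogeneous, projectively locally finite, projectively universal for $\CCfin$, and has projectively finitely separated automorphisms are, respectively, precisely the statements that $F$ is homogeneous, locally finite, universal for $\CCfin^\op$, and has finitely separated automorphisms when everything is computed in $\CC^\op$. Thus Theorem~\ref{akpt.thm.extr-amen} applies to $(\CC^\op, \CCfin^\op)$.

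Next I would read off and re-translate the conclusion. The theorem yields that $\Aut(F)$, endowed with the topology $\tau_F$ built in $\CC^\op$ -- which is by definition the dual topology $\tau_F^\op$ -- is extremely amenable if and only if $\CCfin^\op$ has the Ramsey property. Since a category has the dual Ramsey property exactly when its opposite has the Ramsey property, the right-hand condition says precisely that $\CCfin$ has the dual Ramsey property, giving the stated equivalence.

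The only point that requires a moment's care, and which I expect to be the sole genuine obstacle, concerns the identity of the group whose extreme amenability is asserted. The automorphisms of $F$ form the same underlying set $\iso(F, F)$ whether computed in $\CC$ or in $\CC^\op$, but composition is reversed, so the theorem applied to $\CC^\op$ literally produces the opposite group. Because the $\CC^\op$-inverse of an automorphism coincides with its $\CC$-inverse, the map $g \mapsto g^{-1}$ is at once a group isomorphism between $\Aut(F)$ taken in $\CC^\op$ and $\Aut(F)$ taken in $\CC$, and a self-homeomorphism of the underlying space carrying $\tau_F^\op$ (being inversion in a topological group, hence continuous with continuous inverse). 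As extreme amenability is a property of the topological group alone and is invariant under such isomorphisms, it transfers, and the equivalence claimed in the corollary follows.
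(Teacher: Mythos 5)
Your proposal is correct and matches the paper exactly: the paper offers no separate argument for this corollary, deriving it as an immediate consequence of the Duality Principle applied to Theorem~\ref{akpt.thm.extr-amen}, which is precisely the instantiation at $(\CC^\op,\CCfin^\op)$ that you carry out. Your additional remark that inversion identifies $\Aut_{\CC^\op}(F)$ with the opposite topological group and that extreme amenability transfers across this identification is a correct piece of care that the paper leaves implicit.
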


\section{Expansions, group actions and the expansion property}
\label{akpt.sec.exp-group-acts}

Recall that all the categories in this paper are locally small, as we have stipulated at the very beginning of the paper.

An \emph{expansion} of a category $\CC$ is a category $\CC^*$ together with a forgetful functor
$U : \CC^* \to \CC$ (that is, a functor which is surjective on objects and injective on hom-sets).
We shall generally follow the convention that $A, B, C, \ldots$ denote objects from $\CC$
while $\calA, \calB, \calC, \ldots$ denote objects from $\CC^*$.
Since $U$ is injective on hom-sets we may safely assume that
$\hom_{\CC^*}(\calA, \calB) \subseteq \hom_\CC(A, B)$ where $A = U(\calA)$, $B = U(\calB)$.
In particular, $\id_\calA = \id_A$ for $A = U(\calA)$. Moreover, it is safe to drop
subscripts $\CC$ and $\CC^*$ in $\hom_\CC(A, B)$ and $\hom_{\CC^*}(\calA, \calB)$, so we shall
simply write $\hom(A, B)$ and $\hom(\calA, \calB)$, respectively.
Let
$
  U^{-1}(A) = \{\calA \in \Ob(\CC^*) : U(\calA) = A \}
$. Note that this is not necessarily a set.

An expansion $U : \CC^* \to \CC$ is \emph{reasonable} (cf.~\cite{KPT}) if
for every $e \in \hom(A, B)$ and every $\calA \in U^{-1}(A)$ there is a $\calB \in U^{-1}(B)$ such that
$e \in \hom(\calA, \calB)$:
\begin{center}
    \begin{tikzcd}
      \calA \arrow[r, "e"] \arrow[d, dashed, mapsto, "U"'] & \calB \arrow[d, dashed, mapsto, "U"] \\
      A \arrow[r, "e"'] & B
    \end{tikzcd}
\end{center}

An expansion $U : \CC^* \to \CC$ has \emph{unique restrictions} if
for every $\calB \in \Ob(\CC^*)$ and every $e \in \hom(A, U(\calB))$ there is a \emph{unique} $\calA \in U^{-1}(A)$
such that $e \in \hom(\calA, \calB)$:
\begin{center}
    \begin{tikzcd}
      \llap{\hbox{$\restr \calB e = \mathstrut$}}\calA \arrow[r, "e"] \arrow[d, dashed, mapsto, "U"'] & \calB \arrow[d, dashed, mapsto, "U"] \\
      A \arrow[r, "e"'] & B
    \end{tikzcd}
\end{center}
We denote this unique $\calA$ by $\restr \calB e$ and refer to it as the \emph{restriction of $\calB$ along~$e$}.

\begin{LEM}\label{akpt.lem.restr}
  Let $U : \CC^* \to \CC$ be an expansion with unique restrictions.
  
  $(a)$ Let $\calA \in U^{-1}(A)$. Then $\restr{\calA}{\id_A} = \calA$. Hence, if $U(f) = \id_A$ then $f = \id_A$.

  $(b)$ $f \in \hom(\calA, \calB)$ if and only if $\calA = \restr \calB f$.

  $(c)$ Let $f \in \hom(A, B)$ and $g \in \hom(B, C)$ and let $\calC \in U^{-1}(C)$. Then $\restr{(\restr{\calC}{g})}{f} = \restr{\calC}{g \cdot f}$.

  $(d)$ Let $A, B \in \Ob(\CC)$ and let
  $f : A \to B$ be an isomorphism in $\CC$. Take any $\calB \in U^{-1}(B)$ and let $\calA = \restr{\calB}{f}$. Then
  $f : \calA \to \calB$ is an isomorphism in $\CC^*$.
\end{LEM}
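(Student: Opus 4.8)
The plan is to treat parts $(a)$--$(c)$ as direct unwindings of the defining uniqueness property of restrictions together with the identification $\id_\calA = \id_A$, and then to bootstrap these into the genuinely substantive part $(d)$. The crucial technical tool will be $(b)$, which serves as a dictionary translating the statement ``$f$ is a $\CC^*$-morphism $\calA \to \calB$'' into the restriction equation ``$\calA = \restr{\calB}{f}$''.

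First I would prove $(a)$. Since $\id_\calA = \id_A$, the morphism $\id_A$ lies in $\hom(\calA, \calA)$, so $\calA$ is a witness to the existence clause in the definition of restriction; by uniqueness, $\restr{\calA}{\id_A} = \calA$. For the ``hence'' part, given $f$ in $\CC^*$ with $U(f) = \id_A$, its domain $\calX$ and codomain $\calY$ both satisfy $U(\calX) = U(\calY) = A$, and as an element of $\hom_\CC(A,A)$ we have $f = \id_A$; thus $\id_A \in \hom(\calX, \calY)$, so $\calX = \restr{\calY}{\id_A} = \calY$ by uniqueness together with the first sentence, giving $f = \id_\calX$. Part $(b)$ is then a reformulation of the defining property: if $f \in \hom(\calA, \calB)$ then $\calA \in U^{-1}(A)$ is a witness with $f \in \hom(A, U(\calB))$, so $\calA = \restr{\calB}{f}$ by uniqueness; conversely $\calA = \restr{\calB}{f}$ means exactly that $f \in \hom(\calA, \calB)$. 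For $(c)$, set $\calB = \restr{\calC}{g}$ and $\calA = \restr{\calB}{f}$; by $(b)$, $g \in \hom(\calB, \calC)$ and $f \in \hom(\calA, \calB)$, hence $g \cdot f \in \hom(\calA, \calC)$, and applying $(b)$ once more yields $\calA = \restr{\calC}{g \cdot f}$, which is the claim.

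The real content is in $(d)$, and this is where I expect the main obstacle. Write $g = f^{-1}$ for the inverse in $\CC$ and $\calA = \restr{\calB}{f}$, so $f \in \hom(\calA, \calB)$ by $(b)$. The key step is to show that $g$ is itself a $\CC^*$-morphism $\calB \to \calA$, i.e.\ that $\calB = \restr{\calA}{g}$. Here $(c)$ does the work: applying it to the double restriction $\restr{(\restr{\calB}{f})}{g}$ (with inner morphism $f : A \to B$ and outer morphism $g : B \to A$) gives $\restr{\calA}{g} = \restr{\calB}{f \cdot g} = \restr{\calB}{\id_B} = \calB$, the last equality by $(a)$. Hence $g \in \hom(\calB, \calA)$ by $(b)$. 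Finally, $g \cdot f$ and $f \cdot g$ are $\CC^*$-morphisms whose images under $U$ are $\id_A$ and $\id_B$, so by the identity-reflection clause of $(a)$ they equal $\id_\calA$ and $\id_\calB$; thus $f$ is an isomorphism in $\CC^*$ with inverse $g$. The only delicate point is getting the orientation of the composites right when invoking $(c)$: it is essential that $f \cdot g = \id_B$ (and not $g \cdot f$) is the composite that lands inside $\calB$, so that $(a)$ applies and the double restriction collapses to $\calB$.
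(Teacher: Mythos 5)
Your proposal is correct and follows exactly the route the paper indicates: the paper's proof is the one-line remark that $(a)$ and $(b)$ follow from uniqueness of restrictions while $(c)$ and $(d)$ follow from $(a)$ and $(b)$, and your argument is a faithful and careful expansion of that plan, including the correct handling of the composite $f \cdot g = \id_B$ in part $(d)$.
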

\begin{proof}
  $(a)$ and $(b)$ follow immediately from the fact that restrictions are unique, while $(c)$ and $(d)$ follow from
  $(a)$ and $(b)$.
\end{proof}

The proofs of the following two lemmas are straightforward and very similar, so we omit the proof of the first lemma.

\begin{LEM}\label{sbrd.lem.disj-union}
  The expansion $U : \CC^* \to \CC$ is an expansion with unique restrictions if and only if
  for all $A \in \Ob(\CC)$ and all $\calB \in \Ob(\CC^*)$ we have that
  $\hom_\CC(A, U(\calB)) = \bigcup_{\calA \in U^{-1}(A)} \hom_{\CC^*}(\calA, \calB)$
  and this is a disjoint union.
\end{LEM}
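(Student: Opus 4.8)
The plan is to unwind both sides of the equivalence and observe that they match term by term: existence of restrictions corresponds to the union exhausting all of $\hom_\CC(A, U(\calB))$, while uniqueness of restrictions corresponds to the union being disjoint. Before splitting into the two directions I would record the one inclusion that holds with no hypotheses at all. Because $U$ is injective on hom-sets we have agreed to regard $\hom_{\CC^*}(\calA, \calB) \subseteq \hom_\CC(U(\calA), U(\calB))$; hence for every $\calA \in U^{-1}(A)$ we have $\hom_{\CC^*}(\calA, \calB) \subseteq \hom_\CC(A, U(\calB))$, and therefore
\[
  \bigcup_{\calA \in U^{-1}(A)} \hom_{\CC^*}(\calA, \calB) \subseteq \hom_\CC(A, U(\calB))
\]
always holds. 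Consequently the asserted set equality is equivalent to the reverse inclusion, namely that each $e \in \hom_\CC(A, U(\calB))$ lies in $\hom_{\CC^*}(\calA, \calB)$ for \emph{some} $\calA \in U^{-1}(A)$.

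For the forward implication I would assume $U$ has unique restrictions and fix $A \in \Ob(\CC)$ and $\calB \in \Ob(\CC^*)$. Given $e \in \hom_\CC(A, U(\calB))$, the defining property of unique restrictions supplies an $\calA \in U^{-1}(A)$ with $e \in \hom_{\CC^*}(\calA, \calB)$, which is precisely the reverse inclusion and hence yields the equality. Disjointness is then just the uniqueness clause: if $e \in \hom_{\CC^*}(\calA, \calB) \sec \hom_{\CC^*}(\calA', \calB)$ for some $\calA, \calA' \in U^{-1}(A)$, then both $\calA$ and $\calA'$ serve as a restriction of $\calB$ along $e$, so $\calA = \calA'$ and distinct indices contribute disjoint pieces.

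For the converse I would assume the equality holds as a disjoint union and again fix $\calB$ and an $e \in \hom_\CC(A, U(\calB))$. Existence of a restriction is immediate, since $e$ lies in the right-hand side and hence in $\hom_{\CC^*}(\calA, \calB)$ for some $\calA \in U^{-1}(A)$; uniqueness is exactly disjointness, for if $e$ lay in both $\hom_{\CC^*}(\calA, \calB)$ and $\hom_{\CC^*}(\calA', \calB)$ with $\calA \ne \calA'$ the two pieces would intersect, contradicting the disjointness assumption. Thus $U$ has unique restrictions. I anticipate no genuine difficulty here: the argument is a direct translation of definitions, and the only point meriting care is the unconditional inclusion isolated above, which is what legitimizes reducing the full set equality to the single reverse inclusion.
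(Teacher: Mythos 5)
Your proof is correct and matches the paper's intent exactly: the paper omits this proof as straightforward (referring to the analogous argument for Lemma~\ref{sbrd.lem.iso-disj-union}), and your direct unwinding --- existence of restrictions $\Leftrightarrow$ the union exhausts $\hom_\CC(A, U(\calB))$, uniqueness $\Leftrightarrow$ disjointness, with the unconditional inclusion noted up front --- is precisely that argument.
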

\begin{proof}
  Analogous to the proof of Lemma~\ref{sbrd.lem.iso-disj-union}.
\end{proof}

\begin{LEM}\label{sbrd.lem.iso-disj-union}
  Let $U : \CC^* \to \CC$ be an expansion with unique restrictions.
  For $A \in \Ob(\CC)$ let $\calA \in U^{-1}(A)$ be arbitrary, and let
  $\{ \calA_i : i \in I \}$ be all the objects in $\CC^*$ isomorphic to $\calA$
  such that $U(\calA_i) = A$, $i \in I$.

  $(a)$
  $\Aut_\CC(A) = \bigcup_{i \in I} \iso_{\CC^*}(\calA_i, \calA)$
  and this is a disjoint union.
  
  $(b)$
  Suppose that $I$ is finite and that $\Aut(A)$ is finite.
  Then $|\Aut_\CC(A)| = |I| \cdot |\Aut_{\CC^*}(\calA)|$.
\end{LEM}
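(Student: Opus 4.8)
The plan is to build part $(a)$ directly on top of Lemma~\ref{sbrd.lem.disj-union} and Lemma~\ref{akpt.lem.restr}, and then to derive part $(b)$ as a counting consequence of part $(a)$.

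For part $(a)$, I would first apply Lemma~\ref{sbrd.lem.disj-union} with the fixed object $\calA$ in the role of $\calB$ and with base object $A$, obtaining the disjoint decomposition
$$
  \hom_\CC(A, A) = \bigcup_{\calA' \in U^{-1}(A)} \hom_{\CC^*}(\calA', \calA).
$$
Concretely, every $g \in \hom_\CC(A, A)$ lifts uniquely to a $\CC^*$-morphism $g : \restr{\calA}{g} \to \calA$. The load-bearing observation is then that $g \in \Aut_\CC(A)$ if and only if this unique lift is an isomorphism in $\CC^*$. Indeed, if $g$ is an isomorphism in $\CC$, then Lemma~\ref{akpt.lem.restr}$(d)$, applied to the isomorphism $g : A \to A$ and the fixed object $\calA$, yields that $g : \restr{\calA}{g} \to \calA$ is an isomorphism in $\CC^*$, so $\restr{\calA}{g}$ is isomorphic to $\calA$ and hence equals some $\calA_i$; conversely, if $g \in \iso_{\CC^*}(\calA_i, \calA)$ then $U(g) = g$ is an isomorphism in $\CC$, since $U$ is a functor and functors preserve isomorphisms. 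Intersecting the displayed disjoint union with $\Aut_\CC(A) = \iso_\CC(A, A) \subseteq \hom_\CC(A, A)$ and discarding the summands whose index $\calA'$ is not isomorphic to $\calA$ (for those, the intersection is empty by the equivalence just established) leaves exactly $\Aut_\CC(A) = \bigcup_{i \in I} \iso_{\CC^*}(\calA_i, \calA)$. Disjointness is inherited from Lemma~\ref{sbrd.lem.disj-union}, or equivalently read off from Lemma~\ref{akpt.lem.restr}$(b)$: any $g \in \iso_{\CC^*}(\calA_i, \calA)$ forces $\calA_i = \restr{\calA}{g}$, which pins down the index $i$.

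For part $(b)$, part $(a)$ gives $|\Aut_\CC(A)| = \sum_{i \in I} |\iso_{\CC^*}(\calA_i, \calA)|$ because the union is disjoint, so it suffices to show each summand equals $|\Aut_{\CC^*}(\calA)|$. For each $i \in I$ fix an isomorphism $\phi_i : \calA_i \to \calA$ in $\CC^*$ (one exists by the definition of $I$). Post-composition $\alpha \mapsto \alpha \cdot \phi_i$ is then a bijection $\Aut_{\CC^*}(\calA) \to \iso_{\CC^*}(\calA_i, \calA)$ with inverse $g \mapsto g \cdot \phi_i^{-1}$, whence $|\iso_{\CC^*}(\calA_i, \calA)| = |\Aut_{\CC^*}(\calA)|$. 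Summing over the $|I|$ indices yields $|\Aut_\CC(A)| = |I| \cdot |\Aut_{\CC^*}(\calA)|$.

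I expect the only genuinely delicate point to be the equivalence ``$g$ is a $\CC$-isomorphism if and only if its unique lift is a $\CC^*$-isomorphism'' used in part $(a)$; everything else is bookkeeping. This equivalence is not automatic, since $U$ is merely faithful, so the forward direction really must invoke Lemma~\ref{akpt.lem.restr}$(d)$ to promote a $\CC$-isomorphism to a $\CC^*$-isomorphism, while the reverse direction comes for free from functoriality of $U$.
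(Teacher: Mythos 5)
Your proof is correct and follows essentially the same route as the paper's: part $(a)$ rests on unique restrictions together with Lemma~\ref{akpt.lem.restr}$(d)$ to identify each $\CC$-automorphism of $A$ with an element of exactly one $\iso_{\CC^*}(\calA_i,\calA)$, and part $(b)$ counts via the bijection $\iso_{\CC^*}(\calA_i,\calA)\cong\Aut_{\CC^*}(\calA)$, which the paper asserts and you spell out by composing with a fixed isomorphism $\phi_i$. The only cosmetic difference is that you route part $(a)$ through the full decomposition of $\hom_\CC(A,A)$ from Lemma~\ref{sbrd.lem.disj-union} and then intersect with $\Aut_\CC(A)$, whereas the paper argues directly on automorphisms; the substance is identical.
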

\begin{proof}
  $(a)$
  Take any $f \in \Aut_\CC(A)$. Then $\restr{\calA}{f} = \calA_i$ for some $i \in I$
  (Lemma~\ref{akpt.lem.restr}), so $f \in \iso_{\CC^*}(\calA_i, \calA)$.
  Conversely,
  take any $f \in \iso_{\CC^*}(\calA_i, \calA)$ for some $i \in I$. Then $f : A \to A$ is
  clearly an automorphism of $A$. The union is disjoint because of unique restrictions.
  
  $(b)$
  Since $\Aut_{\CC^*}(\calA) \subseteq \Aut_\CC(A)$ it follows that
  $\Aut_{\CC^*}(\calA)$ is also finite. Note that for each $i \in I$,
  $|\iso_{\CC^*}(\calA_i, \calA)| = |\Aut_{\CC^*}(\calA)|$. The claim now follows from~$(a)$.
\end{proof}

\begin{LEM}\label{akpt.lem.loc-fin-U}
  Let $U : \CC^* \to \CC$ be a reasonable expansion with unique restrictions.
  If $F \in \Ob(\CC)$ is locally finite and $\calF \in U^{-1}(F)$ then $\calF$ is locally finite.
\end{LEM}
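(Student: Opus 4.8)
The plan is to transport the witnesses of local finiteness from $F$ up to $\calF$ via the restriction operator $\restr{(-)}{(-)}$, using only Lemma~\ref{akpt.lem.restr}. Write $\CCfin^*$ for the full subcategory of $\CC^*$ consisting of those objects whose $U$-image lies in $\CCfin$, and for an object $\calX$ of $\CC^*$ put $X = U(\calX)$. I will repeatedly use two facts: since $U$ is injective on hom-sets and $\hom_{\CC^*}(\calX, \calY) \subseteq \hom_\CC(X, Y)$, an identity between morphisms of $\CC^*$ holds precisely when the corresponding identity holds in $\CC$; and, by Lemma~\ref{akpt.lem.restr}(b), $h \in \hom(\calX, \calY)$ iff $\calX = \restr{\calY}{h}$.

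First I would verify the existence clause. Fix $\calA, \calB \in \Ob(\CCfin^*)$ and $e \in \hom(\calA, \calF)$, $f \in \hom(\calB, \calF)$; then $A, B \in \Ob(\CCfin)$ and $e \in \hom(A, F)$, $f \in \hom(B, F)$. Local finiteness of $F$ supplies $D \in \Ob(\CCfin)$ with $r \in \hom(D, F)$, $p \in \hom(A, D)$, $q \in \hom(B, D)$ such that $r \cdot p = e$ and $r \cdot q = f$ in $\CC$. Set $\calD = \restr{\calF}{r}$; since $U(\calD) = D \in \Ob(\CCfin)$, the object $\calD$ is finite and $r \in \hom(\calD, \calF)$ by part~(b). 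To lift $p$ and $q$, note $\calA = \restr{\calF}{e}$, and use the composition law of part~(c):
$$
  \restr{\calD}{p} = \restr{(\restr{\calF}{r})}{p} = \restr{\calF}{r \cdot p} = \restr{\calF}{e} = \calA,
$$
so $p \in \hom(\calA, \calD)$; symmetrically $q \in \hom(\calB, \calD)$. The equations $r \cdot p = e$ and $r \cdot q = f$ then transfer to $\CC^*$ by injectivity of $U$ on hom-sets.

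Next I would handle the universality clause. Take any $\calH \in \Ob(\CC^*)$ together with $r' \in \hom(\calH, \calF)$, $p' \in \hom(\calA, \calH)$, $q' \in \hom(\calB, \calH)$ with $r' \cdot p' = e$ and $r' \cdot q' = f$; applying $U$ yields the analogous data over $H$ in $\CC$. Local finiteness of $F$ then gives $s \in \hom(D, H)$ with $s \cdot p = p'$, $s \cdot q = q'$ and $r' \cdot s = r$. Since $\calH = \restr{\calF}{r'}$ and $r = r' \cdot s$, part~(c) yields
$$
  \restr{\calH}{s} = \restr{(\restr{\calF}{r'})}{s} = \restr{\calF}{r' \cdot s} = \restr{\calF}{r} = \calD,
$$
so $s \in \hom(\calD, \calH)$ by part~(b), and the three commutativities again transfer to $\CC^*$. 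This is exactly the diagram required by the second clause, so $\calF$ is locally finite for $\CCfin^*$.

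The argument is essentially pure bookkeeping once the restriction calculus is available, and I do not expect a genuinely hard step. The one point demanding care is checking that each morphism handed back by local finiteness of $F$ actually lands in the prescribed hom-set of $\CC^*$, which is precisely what parts~(b) and~(c) of Lemma~\ref{akpt.lem.restr} deliver; in particular the lifting of $s$ never uses finiteness of $\calH$, so an arbitrary (possibly infinite) $\calH$ causes no difficulty. Note that reasonableness of $U$ plays no role here --- unique restrictions alone underwrite every step.
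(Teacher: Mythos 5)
Your proof is correct and follows essentially the same route as the paper's: both arguments transport the witnesses $D$, $r$, $p$, $q$, $s$ from $\CC$ to $\CC^*$ by taking restrictions along $r$ and $s$ and invoking the composition law $\restr{(\restr{\calC}{g})}{f} = \restr{\calC}{g \cdot f}$ from Lemma~\ref{akpt.lem.restr}. Your two side remarks (that the mediating object $\calH$ need not be finite, and that reasonableness of $U$ is never used) are both accurate and, if anything, slightly more careful than the paper's write-up, which nominally restricts the second clause to $\Ob(\CCfin^*)$.
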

\begin{proof}
  Take any $\calA, \calB \in \Ob(\CCfin^*)$ and $e \in \hom(\calA, \calF)$, $f \in \hom(\calB, \calF)$. Let $A = U(\calA)$ and
  $B = U(\calB)$. Note that $\calA = \restr \calF e$ and $\calB = \restr \calF f$.
  
  Since $F$ is locally finite, there exist $C \in \Ob(\CCfin)$, $p \in \hom(A, C)$, $q \in \hom(B, C)$ and $r \in \hom(C, F)$
  such that $r \cdot p = e$ and $r \cdot q = f$. Let $\calC = \restr \calF r$. Let $\calA' = \restr \calC p$.
  Then $p \in \hom(\calA', \calC)$. Moreover, Lemma~\ref{akpt.lem.restr} yields
  $\calA' = \restr{(\restr{\calF}{r})}{p} = \restr{\calF}{r \cdot p} = \restr{\calF}{e} = \calA$, whence
  $p \in \hom(\calA, \calC)$. Analogously, $q \in \hom(\calB, \calC)$.
  
  Assume now that there is an object $\calC' \in \Ob(\CCfin^*)$ together with morphisms
  $p' \in \hom(\calA, \calC')$, $q' \in \hom(\calB, \calC')$ and $r' \in \hom(\calC', \calF)$
  such that $r' \cdot p' = e$ and $r' \cdot q' = f$. Let $C' = U(\calC)$. Then
  $p' \in \hom(A, C')$, $q' \in \hom(B, C')$, $r' \in \hom(C', F)$, and $r' \cdot p' = e$ and $r' \cdot q' = f$.
  Since $F$ is locally finite, there is a morphism $s \in \hom(C, C')$ such that the corresponding
  diagram in $\CC$ commutes.
  Let $\calC'' = \restr{\calC'}{s}$. Since $\calC' = \restr{\calF}{r'}$, Lemma~\ref{akpt.lem.restr} yields
  $\calC'' = \restr{\calC'}{s} = \restr{(\restr{\calF}{r'})}{s} = \restr{\calF}{r' \cdot s} = \restr{\calF}{r} = \calC$,
  so, in particular, $s \in \hom(\calC, \calC')$ and it makes the corresponding diagram in $\CC^*$ commute.
\end{proof}

An expansion $U : \CC^* \to \CC$ is \emph{precompact} (cf.~\cite{vanthe-more}) if
$U^{-1}(A)$ is a set for all $A \in \Ob(\CC)$, and it is a finite set for all $A \in \Ob(\CCfin)$.

\begin{LEM}\label{akpt.lem.age-C*}
  Let $U : \CC^* \to \CC$ be a reasonable precompact expansion with unique restrictions, and
  let $\CCfin^*$ be the full subcategory of $\CC^*$ spanned by the objects
  $$
    \Ob(\CCfin^*) = \UNION \{U^{-1}(A) : A \in \Ob(\CCfin)\}.
  $$
  Then $\CC^*$ and $\CCfin^*$ satisfy (C1)--(C5).
\end{LEM}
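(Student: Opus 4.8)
The plan is to verify each of the five axioms (C1)--(C5) for the pair $(\CC^*, \CCfin^*)$, leaning on the corresponding properties of $(\CC, \CCfin)$ together with the structural lemmas already established for reasonable precompact expansions with unique restrictions.

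First I would dispatch the easy axioms. For (C1), recall that $\hom_{\CC^*}(\calA, \calB) \subseteq \hom_\CC(A, B)$, so any morphism in $\CC^*$ is a morphism in $\CC$ and is therefore mono in $\CC$; since $U$ is a faithful functor into a category where all morphisms are mono, monicity transfers down to $\CC^*$. For (C2), the object set $\Ob(\CCfin^*) = \UNION \{U^{-1}(A) : A \in \Ob(\CCfin)\}$ is a union indexed by the set $\Ob(\CCfin)$ (which is a set by (C2) for $\CC$), and each $U^{-1}(A)$ is a set because the expansion is precompact; a set-indexed union of sets is a set. For (C3), given $\calA, \calB \in \Ob(\CCfin^*)$ with $A = U(\calA)$, $B = U(\calB)$, we have $\hom(\calA, \calB) \subseteq \hom(A, B)$, and the latter is finite by (C3) for $\CC$, so $\hom(\calA, \calB)$ is finite as a subset of a finite set.

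Next I would handle (C4) and (C5), where the expansion hypotheses do the real work. For (C4), take $\calF \in \Ob(\CC^*)$ and set $F = U(\calF)$; by (C4) for $\CC$ there is some $A \in \Ob(\CCfin)$ with $A \toCC F$, i.e.\ a morphism $e \in \hom(A, F)$. Since the expansion has unique restrictions, $\calA := \restr{\calF}{e}$ is an object of $U^{-1}(A) \subseteq \Ob(\CCfin^*)$ with $e \in \hom(\calA, \calF)$, so $\calA \toCC[\CC^*] \calF$; one should just confirm $\calA$ lands in $\CCfin^*$, which holds because $A \in \Ob(\CCfin)$. For (C5), fix $\calB \in \Ob(\CCfin^*)$ with $B = U(\calB)$ and consider $\{\calA \in \Ob(\CCfin^*) : \calA \toCC[\CC^*] \calB\}$. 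Any such $\calA$ admits a morphism into $\calB$, which is also a morphism $A \to B$ in $\CC$, so $A \in \{A' \in \Ob(\CCfin) : A' \toCC B\}$, a finite set by (C5) for $\CC$; moreover, by Lemma~\ref{sbrd.lem.disj-union} every such $\calA$ satisfies $\calA = \restr{\calB}{g}$ for some $g \in \hom(A, B)$, so the map $\calA \mapsto (A, \text{a witnessing morphism})$ shows these $\calA$ inject into the finite set of pairs; alternatively, each fiber $U^{-1}(A)$ is finite by precompactness and only finitely many $A$ arise, so the union is finite.

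The main obstacle, modest as it is, lies in (C5): one must argue that finitely many candidates $\calA$ arise, and this needs \emph{both} that only finitely many underlying $A$ admit a morphism to $B$ (from (C5) for $\CC$) \emph{and} that each fiber $U^{-1}(A)$ is finite (from precompactness, valid because $A \in \Ob(\CCfin)$). It is worth being careful that I am counting objects $\calA$ up to the relevant notion (not morphisms), and that the disjoint-union decomposition of $\hom_\CC(A, B)$ from Lemma~\ref{sbrd.lem.disj-union} guarantees each $\calA$ in the set is genuinely of the form $\restr{\calB}{g}$; everything else is a routine transfer of properties along the faithful functor $U$.
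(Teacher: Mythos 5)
Your proof is correct and follows essentially the same route as the paper: (C1)--(C3) transfer trivially via $\hom_{\CC^*}(\calA,\calB)\subseteq\hom_\CC(A,B)$, (C4) uses the unique restriction $\restr{\calF}{e}$, and (C5) bounds the candidates by $\sum\{|U^{-1}(A)| : A \toCC B\}$, finite by (C5) for $\CC$ together with precompactness. The extra detail you supply (faithfulness for transferring monicity, the disjoint-union decomposition in (C5)) is harmless but not needed beyond what the paper records.
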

\begin{proof}
  (C1) and (C3) are
  trivially satisfied as morphisms in $\CC$ are mono and $\hom(\calA, \calB) \subseteq
  \hom(U(\calA), U(\calB))$ for all $\calA, \calB \in \Ob(\CC^*)$.
  (C2) follows from the fact that $U^{-1}(A)$ is always a set and the fact that $\Ob(\CCfin)$ is a set.
  To show that (C4) holds take any $\calF \in \Ob(\CC^*)$ and let $F = U(\calF)$.
  Because (C4) holds for $\CC$ and $\CCfin$ there is an $A \in \Ob(\CCfin)$ and $e \in \hom(A, F)$.
  Then $\restr \calF e \in U^{-1}(A) \subseteq \Ob(\CCfin^*)$ and $e \in \hom(\restr \calF e, \calF)$ by uniqueness
  of restrictions.
  Finally, to show that (C5) holds take any
  $\calB \in \Ob(\CCfin^*)$ and let $M = \{ \calA \in \Ob(\CCfin^*) : \calA \toCCC \calB \}$ and $B = U(\calB)$; then clearly
  $
    |M| \le \sum \{|U^{-1}(A)| : A \toCC B \},
  $
  which is finite because (C5) holds for $\CC$ and $\CCfin$ and the expansion is precompact.
\end{proof}

Let $U : \CC^* \to \CC$ be a reasonable precompact expansion with unique restrictions and let $F \in \Ob(\CC)$.
For $A \in \Ob(\CCfin)$,  $e \in \hom(A, F)$ and $\calA \in U^{-1}(A)$ let
$$
  N(e, \calA) = \{ \calF \in U^{-1}(F) : e \in \hom(\calA, \calF) \}.
$$
If the expansion $U$ is reasonable then
$N(e, \calA) \ne \0$ for all $e \in \hom(A, F)$ and $\calA \in U^{-1}(A)$.

\begin{LEM}\label{akpt.lem.clopen}
  Let $U : \CC^* \to \CC$ be a reasonable precompact expansion with unique restrictions and let $F \in \Ob(\CC)$.
  Take any $A \in \Ob(\CCfin)$ and $e \in \hom(A, F)$, and let $U^{-1}(A) = \{\calA_0, \ldots, \calA_{n-1}\}$.
  Then $\UNION_{i < n} N(e, \calA_i) = U^{-1}(F)$ and $N(e, \calA_i) \sec N(e, \calA_j) = \0$ whenever $i \ne j$.
\end{LEM}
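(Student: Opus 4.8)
The plan is to derive both assertions from the uniqueness of restrictions, most cleanly through Lemma~\ref{akpt.lem.restr}(b), which identifies the membership $e \in \hom(\calA, \calF)$ with the single equation $\calA = \restr{\calF}{e}$. The key observation is that for a fixed $\calF \in U^{-1}(F)$ the object $\restr{\calF}{e}$ is one well-defined element of $U^{-1}(A)$, so there is exactly one index $i$ for which $\calA_i = \restr{\calF}{e}$. This one fact yields the covering (such an $i$ exists) and the disjointness (such an $i$ is unique) at once.

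First I would record the easy inclusion $N(e, \calA_i) \subseteq U^{-1}(F)$, which is immediate from the definition of $N(e, \calA_i)$, so that $\UNION_{i < n} N(e, \calA_i) \subseteq U^{-1}(F)$. For the reverse inclusion I take an arbitrary $\calF \in U^{-1}(F)$. Since $U(\calF) = F$ and $e \in \hom(A, F) = \hom(A, U(\calF))$, the restriction $\restr{\calF}{e}$ is defined and, by the unique restrictions property, lies in $U^{-1}(A) = \{\calA_0, \ldots, \calA_{n-1}\}$; hence $\restr{\calF}{e} = \calA_i$ for some $i < n$. By Lemma~\ref{akpt.lem.restr}(b) this is exactly $e \in \hom(\calA_i, \calF)$, i.e.\ $\calF \in N(e, \calA_i)$. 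Thus $\UNION_{i < n} N(e, \calA_i) = U^{-1}(F)$.

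For disjointness I suppose $\calF \in N(e, \calA_i) \sec N(e, \calA_j)$. Then $e \in \hom(\calA_i, \calF)$ and $e \in \hom(\calA_j, \calF)$, so Lemma~\ref{akpt.lem.restr}(b) gives both $\calA_i = \restr{\calF}{e}$ and $\calA_j = \restr{\calF}{e}$. Because $\restr{\calF}{e}$ is a single object, $\calA_i = \calA_j$, forcing $i = j$; contrapositively, $N(e, \calA_i) \sec N(e, \calA_j) = \0$ whenever $i \ne j$. (Alternatively, one can obtain the whole statement in one stroke by applying Lemma~\ref{sbrd.lem.disj-union} with $\calB = \calF$: it says that the sets $\hom(\calA_i, \calF)$ partition $\hom(A, F)$, so the fixed morphism $e$ lies in exactly one of them.)

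There is no real obstacle here; this is a bookkeeping lemma. The only point requiring care is confirming that $\restr{\calF}{e}$ is genuinely captured by the finite list $U^{-1}(A)$, which is precisely what the combination of precompactness and unique restrictions guarantees. Everything else is reading the defining condition of $N(e, \calA_i)$ through the restriction operation.
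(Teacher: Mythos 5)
Your proposal is correct and follows essentially the same route as the paper: both arguments reduce membership in $N(e, \calA_i)$ to the equation $\calA_i = \restr{\calF}{e}$ and then use existence and uniqueness of the restriction to obtain the covering and the disjointness, respectively. The only difference is presentational — you cite Lemma~\ref{akpt.lem.restr}(b) explicitly and note the shortcut via Lemma~\ref{sbrd.lem.disj-union}, while the paper invokes the restriction directly.
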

\begin{proof}
  Let us first show $\UNION_{i < n} N(e, \calA_i) = U^{-1}(F)$.
  The inclusion $\subseteq$ is trivial. As for the inclusion $\supseteq$ take any $\calF \in U^{-1}(F)$. Then $\restr \calF e = \calA_i$
  for some $i$, so $\calF \in N(e, \calA_i)$.
  Assume, now, that there exists some $\calF \in N(e, \calA_i) \sec N(e, \calA_j)$.
  Then $\calA_i = \restr \calF e = \calA_j$ whence $i = j$ because $U$ has unique restrictions.
\end{proof}

Let $U : \CC^* \to \CC$ be an expansion with unique restrictions.
We say that $U$ \emph{separates points} if the following holds:
  for every $F \in \Ob(\CC)$ and all $\calF_1, \calF_2 \in U^{-1}(F)$ such that $\calF_1 \ne \calF_2$
  there exist an $A \in \Ob(\CCfin)$ and an $e \in \hom(A, F)$ such that $\restr{\calF_1}{e} \ne \restr{\calF_2}{e}$.

\begin{PROP}\label{akpt.prop.top-U-1F}
  Let $F \in \Ob(\CC)$ be locally finite
  and let $U : \CC^* \to \CC$ be a reasonable precompact expansion with unique restrictions which separates points.

  $(a)$ $\calS_F = \{ N(e, \calA) : U(\calA) \in \Ob(\CCfin), e \in \hom(U(\calA), F) \}$
  is a base of clopen sets of a topology $\sigma_F$ on $U^{-1}(F)$.
  
  $(b)$ $U^{-1}(F)$ with the topology $\sigma_F$ is a Hausdorff space.
\end{PROP}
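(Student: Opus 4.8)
The plan is to read the basic sets through Lemma~\ref{akpt.lem.restr}$(b)$, which lets us rewrite $N(e, \calA) = \{\calF \in U^{-1}(F) : \restr{\calF}{e} = \calA\}$. With this description the whole argument rests on two facts already available: the finite partition supplied by Lemma~\ref{akpt.lem.clopen} (which is where precompactness enters, guaranteeing $U^{-1}(A)$ is finite for $A \in \Ob(\CCfin)$), and the restriction-composition rule of Lemma~\ref{akpt.lem.restr}$(c)$.

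For part $(a)$ I would verify the two base axioms. Covering is immediate: given $\calF \in U^{-1}(F)$, pick any $A \in \Ob(\CCfin)$ and $e \in \hom(A, F)$ (such $A$ exists by (C4)), and then $\calF \in N(e, \restr{\calF}{e})$. The substantive step is the intersection axiom. Given $\calF \in N(e_1, \calA_1) \sec N(e_2, \calA_2)$ with $e_i \in \hom(A_i, F)$ and $A_i \in \Ob(\CCfin)$, I would invoke the local finiteness of $F$ (its first, amalgamation clause) applied to $e_1, e_2$ to obtain $D \in \Ob(\CCfin)$, $r \in \hom(D, F)$, $p \in \hom(A_1, D)$ and $q \in \hom(A_2, D)$ with $r \cdot p = e_1$ and $r \cdot q = e_2$. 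Setting $\calD = \restr{\calF}{r}$, I claim $\calF \in N(r, \calD) \subseteq N(e_1, \calA_1) \sec N(e_2, \calA_2)$. Indeed, for any $\calF' \in N(r, \calD)$ we have $\restr{\calF'}{r} = \calD$, so Lemma~\ref{akpt.lem.restr}$(c)$ gives $\restr{\calF'}{e_1} = \restr{(\restr{\calF'}{r})}{p} = \restr{\calD}{p}$; the same computation for $\calF$ gives $\restr{\calF}{e_1} = \restr{\calD}{p}$, and since $\calA_1 = \restr{\calF}{e_1}$ this forces $\restr{\calF'}{e_1} = \calA_1$, i.e.\ $\calF' \in N(e_1, \calA_1)$; symmetrically $\calF' \in N(e_2, \calA_2)$. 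This establishes $\calS_F$ as a base of a topology $\sigma_F$.

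To see the basic sets are clopen, note they are open by construction, and Lemma~\ref{akpt.lem.clopen} identifies the complement of $N(e, \calA)$ (for $U^{-1}(A) = \{\calA_0, \ldots, \calA_{n-1}\}$) as the finite union $\UNION_{\calA_j \ne \calA} N(e, \calA_j)$, hence open; finiteness of this union is exactly precompactness. Finally, for part $(b)$, given distinct $\calF_1, \calF_2 \in U^{-1}(F)$, the hypothesis that $U$ separates points yields $A \in \Ob(\CCfin)$ and $e \in \hom(A, F)$ with $\restr{\calF_1}{e} \ne \restr{\calF_2}{e}$; then $\calF_1 \in N(e, \restr{\calF_1}{e})$ and $\calF_2 \in N(e, \restr{\calF_2}{e})$ are separated by two basic sets that are disjoint by the second assertion of Lemma~\ref{akpt.lem.clopen}.

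The only nontrivial point is the intersection axiom in $(a)$: it is the one place where the combinatorial content of local finiteness (amalgamating the two witnessing morphisms into a single $r \in \hom(D, F)$) is genuinely used, everything else being formal consequences of unique restrictions and precompactness. I expect this to be the main obstacle, though it dissolves cleanly once one passes to the restriction description $\restr{\calF}{e} = \calA$ and applies the composition rule.
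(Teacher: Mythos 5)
Your proof is correct and follows essentially the same route as the paper: the intersection axiom is settled by amalgamating $e_1,e_2$ over $F$ via local finiteness and then passing to restrictions, with clopenness and the Hausdorff property both read off from Lemma~\ref{akpt.lem.clopen}. The only cosmetic difference is that the paper first transfers local finiteness to $\calF$ via Lemma~\ref{akpt.lem.loc-fin-U} and amalgamates inside $\CC^*$, whereas you amalgamate in $\CC$ and use Lemma~\ref{akpt.lem.restr}$(c)$ to lift the result -- which in effect inlines the proof of that transfer lemma.
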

\begin{proof}
  $(a)$
  To show that $\calS_F$ is a base of some topology on $U^{-1}(F)$ take any $\calF \in N(e, \calA) \sec N(f, \calB)$:
\begin{center}
    \begin{tikzcd}
      \calA \arrow[r, "e"] \arrow[d, dashed, mapsto, "U"'] & \calF \arrow[d, dashed, mapsto, "U"'] & \calB \arrow[l, "f"'] \arrow[d, dashed, mapsto, "U"] \\
      A \arrow[r, "e"'] & F & B \arrow[l, "f"]
    \end{tikzcd}
\end{center}
  Lemma~\ref{akpt.lem.loc-fin-U} tells us that $\calF$ is locally finite, so there exist $\calC \in \Ob(\CCfin^*)$ and
  $p \in \hom(\calA, \calC)$, $q \in \hom(\calB, \calC)$ and $r \in \hom(\calC, \calF)$ such that
  $r \cdot p = e$ and $r \cdot q = f$. Let $C = U(\calC)$:
\begin{center}
    \begin{tikzcd}
      \calC \arrow[d, dashed, mapsto, "U"'] \arrow[rr, bend left=20, "r"]
      & \calA \arrow[r, "e"'] \arrow[d, dashed, mapsto, "U"'] \arrow[l, "p"]
      & \calF \arrow[d, dashed, mapsto, "U"']
      & \calB \arrow[l, "f"] \arrow[d, dashed, mapsto, "U"] \arrow[lll, bend right=30, "q"']
    \\
      C \arrow[rr, bend right=20, "r"']
      & A \arrow[r, "e"] \arrow[l, "p"']
      & F
      & B \arrow[l, "f"'] \arrow[lll, bend left=30, "q"]
    \end{tikzcd}
\end{center}

  Then clearly $\calF \in N(r, \calC)$. Let us show that $N(r, \calC) \subseteq N(e, \calA) \sec N(f, \calB)$.
  Take any $\calF' \in N(r, \calC)$. Then:
\begin{center}
    \begin{tikzcd}
        \calA \arrow[r, "p"'] \arrow[d, dashed, mapsto, "U"'] \arrow[rr, bend left=20, "e"]
      & \calC \arrow[r, "r"'] \arrow[d, dashed, mapsto, "U"']
      & \calF' \arrow[d, dashed, mapsto, "U"]
    \\
      A \arrow[r, "p"] \arrow[rr, bend right=20, "e"'] & C \arrow[r, "r"] & F
    \end{tikzcd}
\end{center}
  Since $r \cdot p = e$ it follows that $\calF' \in N(e, \calA)$. By the same argument $\calF' \in N(f, \calB)$.
  
  Therefore, $\calS_F$ is a base of a topology on $U^{-1}(F)$. Every basic open set is clopen by Lemma~\ref{akpt.lem.clopen}.
  
  $(b)$
  To show that $(U^{-1}(F), \sigma_F)$
  is a Hausdorff space take any $\calF_1, \calF_2 \in U^{-1}(F)$ such that $\calF_1 \ne \calF_2$.
  Because $U$ separates points
  there exist an $A \in \Ob(\CCfin)$ and an $e \in \hom(A, F)$ such that $\calA_1 = \restr{\calF_1}{e} \ne \restr{\calF_2}{e} = \calA_2$.
  Then $\calF_1 \in N(e, \calA_1)$, $\calF_2 \in N(e, \calA_2)$ and $N(e, \calA_1) \sec N(e, \calA_2) = \0$ (Lemma~\ref{akpt.lem.clopen}).
\end{proof}

Next we show that each reasonable expansion with unique restrictions yields
an action of $\Aut(F)$ on $U^{-1}(F)$ for every $F \in \Ob(\CC)$. We shall show that this action is continuous
with respect to topologies $\tau_F$ on $\Aut(F)$ and $\sigma_F$ on $U^{-1}(F)$
(see Lemma~\ref{akpt.lem.top-AutF} and Proposition~\ref{akpt.prop.top-U-1F}).
We refer to this action as \emph{logical}.

\begin{LEM}
  Let $U : \CC^* \to \CC$ be a reasonable expansion with unique restrictions.
  Let $F \in \Ob(\CC)$ and $g \in \Aut(F)$ be arbitrary but fixed.
  
  $(a)$ Let $\calF, \calF', \calF'' \in U^{-1}(F)$ be such that $g \in \hom(\calF, \calF')$ and $g^{-1} \in \hom(\calF', \calF'')$.
  Then $\calF = \calF''$.
  
  $(b)$ For every $\calF \in U^{-1}(F)$ there exists only one $\calF' \in U^{-1}(F)$ such that $g \in \hom(\calF, \calF')$.
\end{LEM}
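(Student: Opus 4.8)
The plan is to reduce the whole lemma to the restriction calculus of Lemma~\ref{akpt.lem.restr}. The key move is to translate each membership statement of the form $h \in \hom(\calX, \calY)$ into the equality $\calX = \restr{\calY}{h}$ via Lemma~\ref{akpt.lem.restr}(b), and then to simplify composite restrictions using Lemma~\ref{akpt.lem.restr}(c) together with the normalization $\restr{\calZ}{\id} = \calZ$ from Lemma~\ref{akpt.lem.restr}(a). The only ingredient beyond this calculus is that $g \in \Aut(F)$, so that $g \cdot g^{-1} = g^{-1} \cdot g = \id_F$ in $\CC$; note in particular that unique restrictions alone will suffice and reasonableness of $U$ will not be needed.

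For part~(a), I would rewrite the two hypotheses through Lemma~\ref{akpt.lem.restr}(b): from $g \in \hom(\calF, \calF')$ I get $\calF = \restr{\calF'}{g}$, and from $g^{-1} \in \hom(\calF', \calF'')$ I get $\calF' = \restr{\calF''}{g^{-1}}$. Substituting the second equality into the first and collapsing the composite by Lemma~\ref{akpt.lem.restr}(c) would give $\calF = \restr{(\restr{\calF''}{g^{-1}})}{g} = \restr{\calF''}{g^{-1} \cdot g} = \restr{\calF''}{\id_F}$, which equals $\calF''$ by Lemma~\ref{akpt.lem.restr}(a). Hence $\calF = \calF''$.

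For part~(b), I would establish existence by exhibiting the explicit candidate $\calF' = \restr{\calF}{g^{-1}}$. To verify $g \in \hom(\calF, \calF')$ it suffices, by Lemma~\ref{akpt.lem.restr}(b), to check $\calF = \restr{\calF'}{g}$, and indeed $\restr{(\restr{\calF}{g^{-1}})}{g} = \restr{\calF}{g^{-1} \cdot g} = \restr{\calF}{\id_F} = \calF$ by parts~(c) and~(a). For uniqueness, I would take any $\calF'_1, \calF'_2$ with $g \in \hom(\calF, \calF'_1)$ and $g \in \hom(\calF, \calF'_2)$; then $\calF = \restr{\calF'_i}{g}$ for $i \in \{1,2\}$, and restricting both sides along $g^{-1}$ recovers $\restr{\calF}{g^{-1}} = \restr{(\restr{\calF'_i}{g})}{g^{-1}} = \restr{\calF'_i}{g \cdot g^{-1}} = \restr{\calF'_i}{\id_F} = \calF'_i$, so $\calF'_1 = \restr{\calF}{g^{-1}} = \calF'_2$.

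I do not expect a genuine obstacle: once the hypotheses are rephrased via Lemma~\ref{akpt.lem.restr}, the argument is a purely formal computation. The one point that demands care is the bookkeeping of composition order when invoking Lemma~\ref{akpt.lem.restr}(c), so that the composites are formed as $g^{-1} \cdot g$ and $g \cdot g^{-1}$ in exactly the order dictated by the direction of the arrows, and so that the cancellations to $\id_F$ are precisely the ones licensed by $g$ being an automorphism of $F$ in $\CC$.
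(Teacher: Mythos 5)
Your proof is correct, and part (a) is essentially identical to the paper's computation $\calF = \restr{\calF'}{g} = \restr{(\restr{\calF''}{g^{-1}})}{g} = \restr{\calF''}{g^{-1}\cdot g} = \calF''$. Where you genuinely diverge is in part (b): the paper obtains existence of $\calF'$ by invoking reasonableness of $U$ (there is \emph{some} $\calF'$ with $g \in \hom(\calF,\calF')$ because the expansion is reasonable), and then gets uniqueness by first upgrading $g \in \hom(\calF,\calF'')$ to $g^{-1} \in \hom(\calF'',\calF)$ via part (a) and reasonableness, and applying (a) once more. You instead exhibit the candidate explicitly as $\calF' = \restr{\calF}{g^{-1}}$ and verify both existence and uniqueness purely inside the restriction calculus of Lemma~\ref{akpt.lem.restr}; since the definition of ``unique restrictions'' already guarantees that $\restr{\calF}{g^{-1}}$ exists, your observation that reasonableness is dispensable here is accurate. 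This buys a marginally stronger statement (the lemma holds for any expansion with unique restrictions) and a more uniform argument, at no real cost; the paper's route is arguably more in keeping with how the lemma is used later, where reasonableness is assumed anyway. Your bookkeeping of the composition order in the two applications of Lemma~\ref{akpt.lem.restr}(c) (yielding $g^{-1}\cdot g$ in the existence step and $g\cdot g^{-1}$ in the uniqueness step) is exactly right.
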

\begin{proof}
  $(a)$
  We have the following:
\begin{center}
    \begin{tikzcd}
        \calF \arrow[r, "g"] \arrow[d, dashed, mapsto, "U"']
      & \calF' \arrow[r, "g^{-1}"] \arrow[d, dashed, mapsto, "U"']
      & \calF'' \arrow[d, dashed, mapsto, "U"]
    \\
      F \arrow[r, "g"'] & F \arrow[r, "g^{-1}"'] & F
    \end{tikzcd}
\end{center}
  whence $\calF = \restr{\calF'}{g} =
  \restr{(\restr{\calF''}{g^{-1}})}{g} = \restr{\calF''}{g^{-1}\cdot g} = \restr{\calF''}{\id_F} = \calF''$.

  $(b)$
  Since $U$ is reasonable there exists and $\calF' \in U^{-1}(F)$ such that $g \in \hom(\calF, \calF')$.
  Assume that there is an $\calF'' \in U^{-1}(F)$ such that $g \in \hom(\calF, \calF'')$.
  Then by $(a)$ we have $g^{-1} \in \hom(\calF'', \calF)$:
\begin{center}
    \begin{tikzcd}
        \calF'' \arrow[r, "g^{-1}"] \arrow[d, dashed, mapsto, "U"']
      & \calF \arrow[r, "g"] \arrow[d, dashed, mapsto, "U"']
      & \calF' \arrow[d, dashed, mapsto, "U"]
    \\
      F \arrow[r, "g^{-1}"'] & F \arrow[r, "g"'] & F
    \end{tikzcd}
\end{center}
  As in $(a)$ we now conclude that $\calF' = \calF''$.
\end{proof}

  For $\calF \in \Ob(\CC)$, $g \in \Aut(F)$ and $\calF \in U^{-1}(F)$ let $\calF^g$ denote the unique
  element of $U^{-1}(F)$ satisfying $g^{-1} \in \hom(\calF, \calF^g)$.

\begin{PROP}\label{akpt.prop.cont-act}
  Let $U : \CC^* \to \CC$ be a reasonable precompact expansion with unique restrictions.
  Let $F \in \Ob(\CC)$ be a locally finite object. Then
  $$
    \Aut(F) \times U^{-1}(F) \to U^{-1}(F) : (g, \calF) \mapsto \calF^g
  $$
  is a continuous group action with respect to topologies $\tau_F$ and $\sigma_F$
  (see Lemma~\ref{akpt.lem.top-AutF} and Proposition~\ref{akpt.prop.top-U-1F}).
\end{PROP}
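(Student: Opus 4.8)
The plan is to reduce every assertion to a manipulation of the restriction operator, so the heart of the argument is a single reformulation. First I would observe that the action admits the clean description
\[
  \calF^g = \restr{\calF}{g}.
\]
Indeed, by definition $\calF^g$ is the unique object with $g^{-1} \in \hom(\calF, \calF^g)$, and this object exists and is unique by the lemma immediately preceding the definition. Since $g^{-1} : F \to F$ is an isomorphism in $\CC$, Lemma~\ref{akpt.lem.restr}$(d)$ shows that $g^{-1} : \calF \to \calF^g$ is an isomorphism in $\CC^*$; hence its inverse $g$ lies in $\hom(\calF^g, \calF)$, and Lemma~\ref{akpt.lem.restr}$(b)$ then gives exactly $\calF^g = \restr{\calF}{g}$.

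With this in hand, the group-action identities are immediate from Lemma~\ref{akpt.lem.restr}. We get $\calF^{\id_F} = \restr{\calF}{\id_F} = \calF$ by part $(a)$, and
\[
  (\calF^g)^f = \restr{(\restr{\calF}{g})}{f} = \restr{\calF}{g \cdot f} = \calF^{g \cdot f}
\]
by the cocycle identity of part $(c)$, so the map is a (right) action of $\Aut(F)$ on $U^{-1}(F)$.

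For continuity it suffices to show that the preimage of each basic clopen set $N(e, \calA)$, with $A = U(\calA) \in \Ob(\CCfin)$ and $e \in \hom(A, F)$, is open. Using $\calF^g = \restr{\calF}{g}$ together with Lemma~\ref{akpt.lem.restr}$(c)$ I would compute
\[
  \calF^g \in N(e, \calA)
  \iff e \in \hom(\calA, \calF^g)
  \iff \calA = \restr{\calF}{g \cdot e}
  \iff \calF \in N(g \cdot e, \calA),
\]
so the preimage is $\{(g, \calF) : \calF \in N(g \cdot e, \calA)\}$. To see it is open, fix a point $(g_0, \calF_0)$ in it and set $e_0 = g_0 \cdot e \in \hom(A, F)$. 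Then $g_0 \in N_F(e, e_0)$ (since $g_0 \cdot e = e_0$) and $\calF_0 \in N(e_0, \calA)$, so $N_F(e, e_0) \times N(e_0, \calA)$ is a basic open neighborhood of $(g_0, \calF_0)$. For any $(g, \calF)$ in this neighborhood one has $g \cdot e = e_0$, hence $\restr{\calF}{g \cdot e} = \restr{\calF}{e_0} = \calA$, i.e. $(g, \calF)$ lies in the preimage. Thus the preimage is open and the action is continuous.

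The only real content is the identification $\calF^g = \restr{\calF}{g}$: once the action is expressed through the restriction operator, both the action axioms and the continuity computation collapse to the cocycle identity $\restr{(\restr{\calF}{g})}{f} = \restr{\calF}{g \cdot f}$ and the uniqueness of restrictions. The points requiring care are the composition order (it must be $g \cdot e$, not $e \cdot g$, that appears) and the observation that $A = U(\calA) \in \Ob(\CCfin)$, which is precisely what makes $N_F(e, e_0)$ and $N(e_0, \calA)$ legitimate basic open sets; local finiteness of $F$ is used only implicitly, through Lemma~\ref{akpt.lem.loc-fin-U} and Proposition~\ref{akpt.prop.top-U-1F}, to guarantee that the topologies $\tau_F$ and $\sigma_F$ are well defined.
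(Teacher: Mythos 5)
Your proof is correct and follows essentially the same route as the paper: the continuity argument uses exactly the same basic neighborhood $N_F(e, g_0\cdot e)\times N(g_0\cdot e,\calA)$ of a point in the preimage, and the action axioms (which the paper dismisses as clear) are verified via Lemma~\ref{akpt.lem.restr}. The identification $\calF^g=\restr{\calF}{g}$ is a clean repackaging of the paper's diagram chases through the cocycle identity $\restr{(\restr{\calF}{g})}{f}=\restr{\calF}{g\cdot f}$, but it is not a genuinely different argument.
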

\begin{proof}
  This is clearly a group action. Let us show that it is continuous.
  Let $N(e, \calA)$ be a basic open set in $U^{-1}(F)$, where $e \in \hom(A, F)$ and $A = U(\calA)$.
  Its inverse image is $M = \{(g, \calF) : \calF^g \in N(e, \calA) \}$. Take any $(g_0, \calF_0) \in M$
  and let us show that
  $$
    (g_0, \calF_0) \in N(e, g_0 \cdot e) \times N(g_0 \cdot e, \calA) \subseteq M.
  $$
  Then $g_0 \in N(e, g_0 \cdot e)$ trivially, while $\calF_0 \in N(g_0 \cdot e, \calA)$ because of
\begin{center}
    \begin{tikzcd}
        \calA \arrow[r, "e"] \arrow[d, dashed, mapsto, "U"']
      & \calF_0^{g_0} \arrow[r, "g_0"] \arrow[d, dashed, mapsto, "U"']
      & \calF_0 \arrow[d, dashed, mapsto, "U"]
    \\
      A \arrow[r, "e"'] & F \arrow[r, "g_0"'] & F
    \end{tikzcd}
\end{center}
  (note that the left square states that $(g_0, \calF_0) \in M$, while the right square comes
  from the fact that $g_0^{-1} \in \hom(\calF_0, \calF_0^{g_0})$).
  
  To show that $N(e, g_0 \cdot e) \times N(g_0 \cdot e, \calA) \subseteq M$ take any
  $(h, \calF_1) \in N(e, g_0 \cdot e) \times N(g_0 \cdot e, \calA)$. From $\calF_1 \in N(g_0 \cdot e, \calA)$ we have
\begin{center}
    \begin{tikzcd}
        \calA \arrow[r, "g_0 \cdot e"] \arrow[d, dashed, mapsto, "U"']
      & \calF_1 \arrow[r, "h^{-1}"] \arrow[d, dashed, mapsto, "U"']
      & \calF_1^h \arrow[d, dashed, mapsto, "U"]
    \\
      A \arrow[r, "g_0 \cdot e"] \arrow[rr, bend right=20, "e"'] & F \arrow[r, "h^{-1}"] & F
    \end{tikzcd}
\end{center}
  Since $h \in N(e, g_0 \cdot e)$ we have that $h \cdot e = g_0 \cdot e$, that is, $e = h^{-1} \cdot g_0 \cdot e$.
  Therefore, $\calF_1^h \in N(e, \calA)$.
\end{proof}

Let $\AAA$ and $\AAA^*$ be categories and $U : \AAA^* \to \AAA$ and expansion.
Following \cite{vanthe-more} we say that $U : \AAA^* \to \AAA$ \emph{has the expansion property}
if for every $A \in \Ob(\AAA)$ there exists a $B \in \Ob(\AAA)$ such that $\calA \toAAA \calB$ for all
$\calA, \calB \in \Ob(\AAA^*)$ with $U(\calA) = A$ and $U(\calB) = B$.

\begin{LEM}\label{akpt.lem.EPEQ}
  Let $\AAA$ and $\AAA^*$ be categories such that $\AAA^*$ is directed and all the morphisms in $\AAA$ are mono, and
  let $U : \AAA^* \to \AAA$ be a reasonable expansion with unique restrictions such that $U^{-1}(A)$ is finite
  for all $A \in \Ob(\AAA)$. Then
  $U : \AAA^* \to \AAA$ has the expansion property if and only if
  for every $\calD \in \Ob(\AAA^*)$ there is a $B \in \Ob(\AAA)$
  such that for all $\calB \in \Ob(\AAA^*)$ with $U(\calB) = B$ we have
  $\calD \toAAA \calB$.
\end{LEM}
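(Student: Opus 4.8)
The plan is to prove the two implications separately; the forward direction is essentially a specialization, while the backward direction carries the real content and is where directedness and unique restrictions are used.

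For the forward implication, I would assume $U$ has the expansion property and fix an arbitrary $\calD \in \Ob(\AAA^*)$. Setting $D = U(\calD)$ and applying the expansion property to the object $D \in \Ob(\AAA)$ produces a $B \in \Ob(\AAA)$ with $\calA \toAAA \calB$ for all $\calA \in U^{-1}(D)$ and all $\calB \in U^{-1}(B)$. Since $\calD \in U^{-1}(D)$, specializing $\calA = \calD$ immediately yields $\calD \toAAA \calB$ for every $\calB \in U^{-1}(B)$, which is exactly the required condition. Neither directedness nor unique restrictions are needed here.

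For the backward implication, I would assume the pointwise condition and fix an arbitrary $A \in \Ob(\AAA)$. The crucial feature is that $U^{-1}(A)$ is finite, say $U^{-1}(A) = \{\calA_1, \ldots, \calA_n\}$. For each $i$ the hypothesis supplies a $B_i \in \Ob(\AAA)$ such that $\calA_i \toAAA \calB$ for every $\calB \in U^{-1}(B_i)$. I would then pick some lift $\calB_i \in U^{-1}(B_i)$ for each $i$ (possible since the forgetful functor is surjective on objects) and invoke directedness of $\AAA^*$, applied inductively to the finite family $\calB_1, \ldots, \calB_n$, to obtain a single $\calB \in \Ob(\AAA^*)$ together with morphisms $f_i \in \hom(\calB_i, \calB)$ in $\AAA^*$. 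Setting $B = U(\calB)$ gives the candidate witness for the expansion property at $A$. To verify it works, fix any $\calA_i \in U^{-1}(A)$ and any $\calB' \in U^{-1}(B)$; viewing $f_i$ as a morphism $B_i \to B$ in $\AAA$ (legitimate since $U$ is injective on hom-sets), I would form the restriction $\restr{\calB'}{f_i} \in U^{-1}(B_i)$, so that $f_i \in \hom(\restr{\calB'}{f_i}, \calB')$ by Lemma~\ref{akpt.lem.restr}. Because $\restr{\calB'}{f_i}$ lies in $U^{-1}(B_i)$, the choice of $B_i$ yields a morphism $g : \calA_i \to \restr{\calB'}{f_i}$ in $\AAA^*$, and the composite $f_i \cdot g : \calA_i \to \calB'$ gives $\calA_i \toAAA \calB'$. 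As $\calA_i$ and $\calB'$ were arbitrary, $U$ has the expansion property.

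The principal obstacle is precisely the reconciliation in the last step: the hypothesis only controls expansions lying over each individual $B_i$, whereas the expansion property demands uniform control over expansions lying over the merged object $B$. The restriction-then-compose trick is what bridges this gap, and it is exactly here that the finiteness of $U^{-1}(A)$ (to render directedness applicable to the whole family at once) together with the existence and uniqueness of restrictions become indispensable; by contrast, reasonableness plays no role in this argument.
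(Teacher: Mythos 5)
Your proof is correct, but the backward direction takes a genuinely different route from the paper's. The paper applies directedness \emph{upstairs to the fiber over $A$}: it lists $U^{-1}(A)=\{\calA_0,\dots,\calA_{k-1}\}$, uses directedness of $\AAA^*$ to find a single $\calD$ receiving all the $\calA_i$, applies the hypothesis once to that $\calD$ to get $B$, and then concludes $\calA_i \toAAA \calD \toAAA \calB$ for every $\calB \in U^{-1}(B)$ by plain composition. You instead apply the hypothesis $n$ times (once per $\calA_i$) to get objects $B_i$, merge chosen lifts $\calB_i$ downstream by directedness into a single $\calB$, and then must pull the conclusion back from the $B_i$'s to $B=U(\calB)$ via the restriction $\restr{\calB'}{f_i}$ and the composite $f_i \cdot g$. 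Both arguments are valid and both use finiteness of $U^{-1}(A)$ together with directedness in an essential way; the difference is that the paper's order of operations makes the reconciliation step you correctly identify as the ``principal obstacle'' disappear entirely, so its proof never invokes unique restrictions (nor reasonableness, nor that morphisms are mono) in either direction, whereas yours genuinely needs the existence and uniqueness of restrictions to transfer control from each $U^{-1}(B_i)$ to $U^{-1}(B)$. Your version is a fine proof, just slightly more demanding on the hypotheses than necessary; the paper's buys economy by directing the expansions of $A$ before ever consulting the pointwise condition.
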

\begin{proof}
  $(\Rightarrow)$
  Obvious.

  $(\Leftarrow)$
  Take any $A \in \Ob(\AAA)$ and let $U^{-1}(A) = \{\calA_0, \ldots, \calA_{k-1}\}$.
  Because $\AAA^*$ is directed there is a $\calD \in \Ob(\AAA^*)$ such that $\calA_i \toAAA \calD$ for all $i < k$.
  For this $\calD$ there is a $B \in \Ob(\AAA)$ which fulfills the requirement of the lemma.
  Take any $\calA, \calB \in \Ob(\CCfin^*)$ with
  $U(\calA) = A$ and $U(\calB) = B$. Then $\calA = \calA_i$ for some $i < k$, so $\calA = \calA_i \toAAA \calD \toAAA
  \calB$, by the choice of~$B$.
\end{proof}

\begin{LEM}\label{akpt.lem.EP-1}
  Let $U : \CC^* \to \CC$ be a reasonable expansion with unique restrictions.
  Let $F$ be a homogeneous locally finite object of $\CC$ and let $G = \Aut(F)$.
  Then for $\calF, \calF' \in U^{-1}(F)$ we have
  $\calF' \in \overline{\calF^G}$ if and only if
  $\Age(\calF') \subseteq \Age(\calF)$.
\end{LEM}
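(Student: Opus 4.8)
The plan is to reduce the topological statement to equalities of restrictions and then let homogeneity do the work. Recall from Proposition~\ref{akpt.prop.top-U-1F} that the sets $N(e,\calA)$ form a base of $\sigma_F$, and that by Lemma~\ref{akpt.lem.restr}(b) one has $\calG \in N(e,\calA)$ iff $\restr{\calG}{e} = \calA$; consequently $\calF' \in \overline{\calF^G}$ iff every basic set $N(e,\calA)$ containing $\calF'$ meets the orbit $\calF^G$. The key preliminary computation concerns how restrictions of $\calF^g$ relate to those of $\calF$. Since $g^{-1}\in\hom(\calF,\calF^g)$, Lemma~\ref{akpt.lem.restr}(b) gives $\calF=\restr{(\calF^g)}{g^{-1}}$, so by Lemma~\ref{akpt.lem.restr}(a),(c),
$$
  \restr{\calF}{f}=\restr{(\calF^g)}{g^{-1}\cdot f}\quad\text{for all } f\in\hom(A,F).
$$
Taking $f=g\cdot e$ yields $\restr{(\calF^g)}{e}=\restr{\calF}{g\cdot e}$, and taking $f=g$ (so $g^{-1}\cdot f=\id_F$) yields $\restr{\calF}{g}=\calF^g$, i.e.\ $g\in\hom(\calF^g,\calF)$.

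For the forward direction I would start from $\calA\in\Age(\calF')$, witnessed by some $e'\in\hom(\calA,\calF')$ with $A=U(\calA)\in\Ob(\CCfin)$, so that $\calF'\in N(e',\calA)$. As $\calF'\in\overline{\calF^G}$, this neighbourhood meets the orbit, giving a $g\in G$ with $\calF^g\in N(e',\calA)$, i.e.\ $e'\in\hom(\calA,\calF^g)$. Composing with $g\in\hom(\calF^g,\calF)$ produces $g\cdot e'\in\hom(\calA,\calF)$, hence $\calA\toCCC\calF$ and $\calA\in\Age(\calF)$.

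For the converse I would fix an arbitrary basic neighbourhood $N(e,\calA)$ of $\calF'$ and show it meets the orbit. From $\calF'\in N(e,\calA)$ we get $e\in\hom(\calA,\calF')$, so $\calA\in\Age(\calF')\subseteq\Age(\calF)$; pick a witness $e''\in\hom(\calA,\calF)$. Now $e,e''\in\hom(A,F)$ and $F$ is homogeneous, so there is a $g\in G$ with $g\cdot e=e''$; the identity above then gives $\restr{(\calF^g)}{e}=\restr{\calF}{g\cdot e}=\restr{\calF}{e''}=\calA$, i.e.\ $\calF^g\in N(e,\calA)$. As $N(e,\calA)$ was arbitrary, $\calF'\in\overline{\calF^G}$.

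The real content is the translation, not any one computation: seeing that orbit-closure membership is exactly ``every finite expanded subobject of $\calF'$ already embeds into $\calF$'', with the identity $\restr{(\calF^g)}{e}=\restr{\calF}{g\cdot e}$ as the bridge between the topological and the combinatorial sides. The only step where a hypothesis genuinely bites is the converse, where homogeneity of $F$ is precisely what lets me slide the arbitrary witness $e$ onto the embedding $e''$ realising $\calA$ inside $\calF$; without it one could only land in some other expansion over the same $F$. Beyond keeping straight the composition order and the defining relation $g^{-1}\in\hom(\calF,\calF^g)$ of the logical action, I anticipate no serious obstacle.
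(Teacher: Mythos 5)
Your proof is correct and follows essentially the same route as the paper's: the forward direction extracts $g$ with $\calF^g\in N(e',\calA)$ and composes with $g\in\hom(\calF^g,\calF)$, and the converse uses homogeneity of $F$ to move the witness $e$ onto an embedding of $\calA$ into $\calF$ (the paper writes the group element as $g^{-1}$ where you write $g$, which is immaterial). Your explicit derivation of $\restr{(\calF^g)}{e}=\restr{\calF}{g\cdot e}$ from Lemma~\ref{akpt.lem.restr} is a clean substitute for the commutative diagrams the paper draws, but the content is identical.
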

\begin{proof}
  $(\Rightarrow)$
  Assume that $\calF' \in \overline{\calF^G}$. Let $\calA \in \Age(\calF')$ be arbitrary,
  and let $e \in \hom(\calA, \calF')$. Note that $\calF' \in N(e, \calA)$.
  As every neighborhood of $\calF'$ intersects $\calF^G$,
  it follows that $N(e, \calA) \sec \calF^G \ne \0$. In other words, there is a $g \in G$ such that
  $\calF^g \in N(e, \calA)$. Then
\begin{center}
    \begin{tikzcd}
        \calA \arrow[r, "e"] \arrow[d, dashed, mapsto, "U"']
      & \calF^g \arrow[r, "g"] \arrow[d, dashed, mapsto, "U"']
      & \calF \arrow[d, dashed, mapsto, "U"]
    \\
      A \arrow[r, "e"] & F \arrow[r, "g"] & F
    \end{tikzcd}
\end{center}
  whence follows that $\calA \in \Age(\calF)$ as $g \cdot e \in \hom(\calA, \calF)$.
  
  $(\Leftarrow)$
  Assume that $\Age(\calF') \subseteq \Age(\calF)$ and let us show that every neighborhood of $\calF'$
  intersects $\calF^G$. Let $N(e, \calA)$ be a neighborhood of $\calF'$.
  Then $\calA \in \Age(\calF') \subseteq \Age(\calF)$, so
  there exists a morphism $f \in \hom(\calA, \calF)$. Since $F$ is homogeneous, there is a $g \in \Aut(F)$
  such that $g \cdot f = e$:
\begin{center}
    \begin{tikzcd}
        \calF \arrow[d, dashed, mapsto, "U"']
      & \calA \arrow[d, dashed, mapsto, "U"'] \arrow[r, "e"] \arrow[l, "f"']
      & \calF' \arrow[d, dashed, mapsto, "U"]
    \\
      F \arrow[rr, bend right=20, "g"'] & A \arrow[r, "e"] \arrow[l, "f"'] & F
    \end{tikzcd}
\end{center}
  But then:
\begin{center}
    \begin{tikzcd}
        \calA \arrow[r, "f"'] \arrow[rr, bend left=20, "e"] \arrow[d, dashed, mapsto, "U"']
      & \calF \arrow[r, "g"'] \arrow[d, dashed, mapsto, "U"']
      & \calF^{g^{-1}} \arrow[d, dashed, mapsto, "U"]
    \\
      A \arrow[r, "f"] \arrow[rr, bend right=20, "e"'] & F \arrow[r, "g"] & F
    \end{tikzcd}
\end{center}
  whence follows that $\calF^{g^{-1}} \in N(e, \calA)$.
\end{proof}

\begin{PROP}\label{akpt.prop.EP-2}
  Let $U : \CC^* \to \CC$ be a reasonable expansion with unique restrictions.
  Let $F$ be a locally finite homogeneous object in $\CC$ and assume that $U^{-1}(F)$ is compact
  with respect to the topology $\sigma_F$ (see Proposition~\ref{akpt.prop.top-U-1F}).
  Let $G = \Aut(F)$ and let $\calF \in U^{-1}(F)$ be arbitrary. Then
  $\restr{U}{\Age(\calF)} : \Age(\calF) \to \Age(F)$ has the expansion property if and only if
  $\Age(\calF) \subseteq \Age(\calF')$ for all $\calF' \in \overline{\calF^G}$.
\end{PROP}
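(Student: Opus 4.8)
The plan is to prove the two implications separately, using as the main tool Lemma~\ref{akpt.lem.EP-1}, which says that $\calF' \in \overline{\calF^G}$ holds precisely when $\Age(\calF') \subseteq \Age(\calF)$. Thus the right-hand condition asserts that \emph{every} point of the orbit closure realises the full age $\Age(\calF)$, which is exactly the kind of minimality statement one expects to be equivalent to the expansion property. Throughout I would write $G = \Aut(F)$ and use freely that $N(e, \calA) = \{\calF' \in U^{-1}(F) : \restr{\calF'}{e} = \calA\}$ by Lemma~\ref{akpt.lem.restr}$(b)$, that these sets form a clopen base of $\sigma_F$, and the functoriality of restrictions in Lemma~\ref{akpt.lem.restr}$(c)$.

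For the forward implication I would argue directly, without compactness. Fixing $\calF' \in \overline{\calF^G}$, Lemma~\ref{akpt.lem.EP-1} already gives $\Age(\calF') \subseteq \Age(\calF)$, so only the reverse inclusion is at stake. Given $\calA \in \Age(\calF)$ with $A = U(\calA)$, the expansion property hands me a witness $B \in \Age(F)$; picking any $b \in \hom(B, F)$ and restricting along it, $\calB' := \restr{\calF'}{b}$ is an expansion of $B$ lying in $\Age(\calF') \subseteq \Age(\calF)$. The expansion property applied to $\calA$ and $\calB'$ then produces a morphism $\calA \toCCC \calB'$, and composing with $b \in \hom(\calB', \calF')$ embeds $\calA$ into $\calF'$, i.e.\ $\calA \in \Age(\calF')$.

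The reverse implication is where the real work lies, and I expect the compactness argument to be the main obstacle. Fix $A \in \Age(F)$ and let $\calA_0, \dots, \calA_{m-1}$ be the finitely many (here I use that $U^{-1}(A)$ is finite) expansions of $A$ occurring in $\Age(\calF)$. The hypothesis says each $\calA_i$ belongs to the age of every point of $\overline{\calF^G}$, so for each $i$ the family $\{N(e, \calA_i) : e \in \hom(A, F)\}$ is an open cover of $\overline{\calF^G}$. Since $\overline{\calF^G}$ is a closed subset of the compact space $U^{-1}(F)$, I would extract a finite subcover indexed by finitely many embeddings $e^{(i)}_j \in \hom(A, F)$. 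The crucial merging step is then to apply local finiteness of $F$ repeatedly to factor all of these (finitely many, all with domain $A$) morphisms through a single $r \in \hom(D, F)$ with $D \in \Age(F)$, writing $e^{(i)}_j = r \cdot p^{(i)}_j$; this $D$ is the witness $B$ I would offer for the expansion property of $A$.

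It then remains to check that $D$ works, and this is where homogeneity enters. Given arbitrary expansions $\calA = \calA_i$ of $A$ and $\calB$ of $D$ in $\Age(\calF)$, I would fix $r' \in \hom(\calB, \calF)$ so that $\restr{\calF}{r'} = \calB$, and use homogeneity of $F$ to choose $g \in G$ with $g \cdot r = r'$. A short restriction computation, combining Lemma~\ref{akpt.lem.restr}$(c)$ with the definition of the action $\calF \mapsto \calF^g$, shows $\restr{\calF^g}{r} = \restr{\calF}{g \cdot r} = \calB$. Since $\calF^g$ again lies in $\overline{\calF^G}$, the finite subcover \emph{for $\calA_i$} forces $\restr{\calF^g}{e^{(i)}_j} = \calA_i$ for some $j$; rewriting $e^{(i)}_j = r \cdot p^{(i)}_j$ and restricting in stages gives $\restr{\calB}{p^{(i)}_j} = \calA_i = \calA$, i.e.\ $p^{(i)}_j \in \hom(\calA, \calB)$ and hence $\calA \toCCC \calB$. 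The only genuinely delicate points are organising the finitely many subcovers so that one $D$ serves all $\calA_i$ simultaneously, and transporting an arbitrary expansion $\calB$ of $D$ back into the orbit via homogeneity so that the cover can be applied; both rely essentially on the compactness of $U^{-1}(F)$ and on $F$ being homogeneous and locally finite.
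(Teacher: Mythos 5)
Your proposal is correct and follows essentially the same route as the paper's proof: the forward direction via Lemma~\ref{akpt.lem.EP-1} and restriction of $\calF'$ along a morphism $B \to F$, and the reverse direction via the relatively clopen cover $\{N(e,\calA)\}$ of $\overline{\calF^G}$, compactness, local finiteness of $F$ to merge the finitely many witnessing morphisms into a single $r : D \to F$, and homogeneity to transport an arbitrary expansion of $D$ back into the orbit. The only (harmless) organizational difference is that you treat all expansions $\calA_0,\dots,\calA_{m-1}$ of $A$ simultaneously by uniting their finite subcovers, whereas the paper fixes one expansion $\calA$ at a time and implicitly relies on Lemma~\ref{akpt.lem.EPEQ} (hence on directedness of $\Age(\calF)$ and finiteness of $U^{-1}(A)$) to upgrade the per-expansion statement to the full expansion property.
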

\begin{proof}
  $(\Rightarrow)$
  Let $\AAA$ be the full subcategory of $\CCfin$ spanned by $\Age(F)$ and $\AAA^*$ the full subcategory
  of $\CCfin^*$ spanned by $\Age(\calF)$, and
  assume that $\restr{U}{\AAA^*} : \AAA^* \to \AAA$ has the expansion property.
  The fact that $F$ is locally finite and
  Lemma~\ref{akpt.lem.loc-fin-U} yield that $\calF$ is locally finite, whence follows that $\AAA^*$ is directed.
  Take any $\calF' \in \overline{\calF^G}$ and let us show that $\Ob(\AAA^*) = \Age(\calF) \subseteq \Age(\calF')$.
  Let $\calA \in \Age(\calF)$ be arbitrary. Because of the expansion property
  and Lemma~\ref{akpt.lem.EPEQ} there is a $B \in \Ob(\AAA)$ such that $\calA \toCCC \calB$
  for all $\calB \in \Ob(\AAA^*)$ with $U(\calB) = B$.
  Take any $f \in \hom(B, F)$ (which exists because $B \in \Ob(\AAA) = \Age(F)$)
  and let $\calB' = \restr{\calF'}{f}$. Then $\calB' \in \Age(\calF')$.
  Since $\calF' \in \overline{\calF^G}$, Lemma~\ref{akpt.lem.EP-1} yields
  $\Age(\calF') \subseteq \Age(\calF)$, so $\calB' \in \Age(\calF)$.
  The choice of $B$ now ensures that $\calA \toCCC \calB'$. Finally,
  $\calA \toCCC \calB' \toCCC \calF'$ (since $\calB' \in \Age(\calF')$),
  whence $\calA \in \Age(\calF')$.

  $(\Leftarrow)$
  Assume that $\Age(\calF) \subseteq \Age(\calF')$ for all $\calF' \in \overline{\calF^G}$. Let
  $\calA \in \Age(\calF)$ be arbitrary and let $A = U(\calA)$. For $e \in \hom(A, F)$ let
  $$
    X_e = \overline{\calF^G} \sec N(e, \calA).
  $$
  Let us show that
  $$
    \overline{\calF^G} = \UNION \{ X_e : e \in \hom(A, F) \}.
  $$
  The inclusion $\supseteq$ is trivial, while the inclusion $\subseteq$ follows from the assumption. Namely,
  if $\calF' \in \overline{\calF^G}$ then $\Age(\calF) \subseteq \Age(\calF')$; so
  $\calA \in \Age(\calF')$, or, equivalently, there is a morphism
  $f \in \hom(\calA, \calF')$, whence $\calF' \in X_f$.

  By construction each $X_e$ is open in $\overline{\calF^G}$.
  Since $\overline{\calF^G}$ is compact (as a closed subspace of the compact space $U^{-1}(F)$), there is a finite sequence
  $e_0, \ldots, e_{k-1} \in \hom(A, F)$ such that
  $$
    \overline{\calF^G} = \UNION \{ X_{e_j} : j < k \}.
  $$

  Since $F$ is locally finite, there exist $B \in \Ob(\CCfin)$ and morphisms $r \in \hom(B, F)$
  and $p_i \in \hom(A, B)$ such that $r \cdot p_i = e_i$, $i < k$.
  Let us show that for every $\calB \in \Ob(\CCfin^*)$ such that $U(\calB) = B$ we have $\calA \toCCC \calB$.

  Take any $\calB \in \Ob(\CCfin^*)$ such that $U(\calB) = B$ and let $s \in \hom(\calB, \calF)$ be any morphism.
  Then, $s \in \hom(B, F)$, so by the homogeneity of $F$ there is a $g \in G$ such that $g \cdot s = r$.
  Since $g \in \hom(\calF, \calF^{g^{-1}})$ we have that
\begin{center}
    \begin{tikzcd}
        \calF^{g^{-1}} \arrow[d, dashed, mapsto, "U"']
      & \calB          \arrow[d, dashed, mapsto, "U"'] \arrow[r, "s"'] \arrow[l, "g \cdot s"]
      & \calF          \arrow[d, dashed, mapsto, "U"]  \arrow[ll, bend right=20, "g"']
    \\
      F & B \arrow[r, "s"] \arrow[l, "r"'] & F \arrow[ll, bend left=20, "g"] 
    \end{tikzcd}
\end{center}
  In particular, $r \in \hom(\calB, \calF^{g^{-1}})$, so $\calB \in \Age(\calF^{g^{-1}})$.
  Now, $\calF^{g^{-1}} \in \overline{\calF^G} = \UNION \{ X_{e_j} : j < k \}$,
  so $\calF^{g^{-1}} \in X_{e_i}$ for some~$i$. Moreover, $r \cdot p_i = e_i$ by the construction of $B$.
  Therefore:
\begin{center}
    \begin{tikzcd}
        \calA          \arrow[d, dashed, mapsto, "U"'] \arrow[r, "e_i"] 
      & \calF^{g^{-1}} \arrow[d, dashed, mapsto, "U"'] 
      & \calB          \arrow[d, dashed, mapsto, "U"]  \arrow[l, "r"']
    \\
      A \arrow[rr, bend right=20, "p_i"'] \arrow[r, "e_i"] & F & B \arrow[l, "r"']
    \end{tikzcd}
\end{center}
  Let $\calA' = \restr{\calB}{p_i}$. Since $\calB = \restr{\calF^{g^{-1}}}{r}$ we have
  $\calA' = \restr{(\restr{\calF^{g^{-1}}}{r})}{p_i} = \restr{\calF^{g^{-1}}}{r \cdot p_i} =
  \restr{\calF^{g^{-1}}}{e_i} = \calA$. Consequently, $p_i \in \hom(\calA, \calB)$. This concludes the proof
  that $\calA \toCCC \calB$.
\end{proof}

An expansion $U : \CC^* \to \CC$ is \emph{projectively reasonable (resp.\ has unique projective restrictions; has the projective expansion property)} if
$U^\op : (\CC^*)^\op \to \CC^\op$ is reasonable (resp.\ has unique restrictions; has the expansion property).

\section{Ramsey expansions}
\label{akpt.sec.ramsey-expansions}

In this section we show that even in this very abstract setting
the existence of finite Ramsey degrees is a necessary and sufficient condition for the Ramsey expansions to exist.
The key technical tool for this result is a certain additive property of Ramsey degrees.
Recall that all the categories are locally small.

\begin{THM}\label{sbrd.thm.small1}
  Let $U : \CC^* \to \CC$ be a reasonable expansion with restrictions and assume that all the morphisms in $\CC$
  are mono. For any $A \in \Ob(\CC)$ we then have:
  $$
    t_{\CC}(A) \le \sum_{\calA \in U^{-1}(A)} t_{\CC^*}(\calA).
  $$
  Consequently, if $U^{-1}(A)$ is finite and
  $t_{\CC^*}(\calA) < \infty$ for all $\calA \in U^{-1}(A)$ then $t_\CC(A) < \infty$.
\end{THM}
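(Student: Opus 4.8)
The plan is to prove the bound $t_{\CC}(A) \le \sum_{\calA \in U^{-1}(A)} t_{\CC^*}(\calA)$ by a direct colouring argument that reduces a $\CC$-colouring of $\hom(A, C)$ to colourings of the hom-sets $\hom(\calA, \calC)$ in the expansion. The essential structural fact I would rely on is Lemma~\ref{sbrd.lem.disj-union}: for a reasonable expansion with unique restrictions, $\hom_\CC(A, U(\calC))$ decomposes as the \emph{disjoint} union $\bigcup_{\calA \in U^{-1}(A)} \hom_{\CC^*}(\calA, \calC)$. Thus every $\CC$-morphism $f : A \to C$ carries a canonical label, namely the unique $\calA = \restr{\calC}{f} \in U^{-1}(A)$ along which it is a morphism in the expansion, and a $k$-colouring of $\hom(A, C)$ restricts to a $k$-colouring of each $\hom(\calA, \calC)$.

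First I would handle the degenerate case where $U^{-1}(A)$ is infinite or some $t_{\CC^*}(\calA) = \infty$, in which the right-hand side is $\infty$ and the inequality is vacuous; so I may assume $U^{-1}(A) = \{\calA_1, \dots, \calA_m\}$ is finite with each $n_j := t_{\CC^*}(\calA_j)$ finite. Fix $k \ge 2$ and $B \in \Ob(\CC)$ with $A \toCC B$. The goal is to produce $C$ with $C \longrightarrow (B)^A_{k,\, n_1 + \cdots + n_m}$. I would build $C$ by iterating the Ramsey-degree guarantees in $\CC^*$: for each $j$ choose, via reasonableness, some $\calB_j \in U^{-1}(B)$, and then use $t_{\CC^*}(\calA_j) = n_j$ to find $\calC_j \in \Ob(\CC^*)$ with $\calC_j \longrightarrow (\calB_j)^{\calA_j}_{k,\, n_j}$. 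The subtlety is that I need a single object $C$ that simultaneously absorbs all the $\calA_j$-colourings, so I would construct the $\calC_j$ in a nested (telescoping) fashion — first get $\calC_1$ for $\calA_1$, then feed $U(\calC_1)$ as the new base target for the $\calA_2$-step, and so on — so that the final $\calC_m$ maps down to objects witnessing all the partition relations at once, with $C := U(\calC_m)$.

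The colouring bookkeeping then runs as follows. Given $\chi : \hom(A, C) \to k$, split it using the disjoint decomposition into colourings $\chi_j := \restr{\chi}{\hom(\calA_j, \calC_m)}$. Applying the partition relations along the telescope yields, stage by stage, morphisms $w_j$ in $\CC^*$ that confine $\chi_j$ on the relevant copies of $\calB_j$ to at most $n_j$ colours; composing these witnesses down to a single $w \in \hom(B, C)$, the image $\chi(w \cdot \hom(A, B))$ is covered by the union of the $m$ colour-sets, each of size at most $n_j$, giving $|\chi(w \cdot \hom(A, B))| \le \sum_j n_j$. Here I must use that $\hom(A, B) = \bigcup_j \hom(\calA_j, \calB)$ disjointly (again Lemma~\ref{sbrd.lem.disj-union}) so that the full $\CC$-colouring of $w \cdot \hom(A, B)$ really is exhausted by the per-expansion pieces.

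The main obstacle I anticipate is the simultaneity: a naive argument gives, for each $j$ separately, an object and a witness, but the partition relation $C \longrightarrow (B)^A_{k, t}$ demands one $C$ and one $w$ that work for all pieces of the colouring at once. Getting the nesting right — threading the successive objects so that each later Ramsey step is compatible with (and does not destroy) the confinement achieved at earlier steps, and producing a single common witness $w : B \to C$ — is the delicate point, and it is exactly why I inflate colour counts (as in the product-degree theorem earlier, using $2^k$ or $k^h$ palettes at intermediate stages) so that the intermediate colourings record enough information to be resolved in one pass. Once the telescoping construction is set up correctly, the final estimate $|\chi(w \cdot \hom(A, B))| \le \sum_{\calA \in U^{-1}(A)} t_{\CC^*}(\calA)$ follows by collecting the per-piece bounds, and the ``consequently'' clause is immediate since a finite sum of finite numbers is finite.
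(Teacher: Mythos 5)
Your plan is essentially the paper's proof: the same reduction to the finite case, the same telescoping construction $\calC_0 = \calB$, $\calC_i \longrightarrow (\calC_{i-1})^{\calA_i}_{k,\, t_i}$, and the same use of the disjoint decomposition from Lemma~\ref{sbrd.lem.disj-union} to turn a single composite witness $w = w_n \cdots w_1$ into the additive bound $\sum_i t_i$. The only superfluous element is the anticipated palette inflation ($2^k$ or $k^h$ at intermediate stages): since the pieces $\hom_{\CC^*}(\calA_i, \calC_n)$ are pairwise disjoint and the confinement achieved at stage $i$ survives composition with the later-chosen $w_{i-1}, \ldots, w_1$ (which only shrinks the set of morphisms being coloured), the paper keeps $k$ colours throughout and merely pulls the colouring back along the already-chosen witnesses when defining each successive stage.
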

\begin{proof}
  If there is an $\calA \in U^{-1}(A)$ with $t_{\CC^*}(\calA) = \infty$ then the inequality is trivially satisfied. The same
  holds if $U^{-1}(A)$ is infinite. Assume, therefore, that $U^{-1}(A) = \{\calA_1, \calA_2, \ldots, \calA_n\}$ and let
  $t_{\CC^*}(\calA_i) = t_i \in \NN$ for each~$i$.
  
  Take any $k \ge 2$ and any $B \in \Ob(\CC)$ such that $A \toCC B$. Let $h \in \hom_\CC(A, B)$ be
  arbitrary. Because the expansion is reasonable there is a $\calB \in \Ob(\CC^*)$ such that
  $h \in \hom_{\CC^*}(\calA_1, \calB)$. Define inductively $\calC_0$, $\calC_1$, \ldots, $\calC_n \in \Ob(\CC^*)$ so that
  $\calC_0 = \calB$ and $\calC_i \longrightarrow (\calC_{i-1})^{\calA_i}_{k, t_i}$. (Note that such a $\calC_i$ exists because
  $t_{\CC^*}(\calA_i) = t_i$.) Let $C_n = U(\calC_n)$ and let us show that
  $$
    C_n \longrightarrow (B)^A_{k, t_1 + \ldots + t_n}.
  $$
  Take any $\chi : \hom_\CC(A, C_n) \to k$. By Lemma~\ref{sbrd.lem.disj-union} we know that
  $$
    \hom_\CC(A, C_n) = \bigcup_{i=1}^n \hom_{\CC^*}(\calA_i, \calC_n),
  $$
  so we can restrict $\chi$ to each $\hom_{\CC^*}(\calA_i, \calC_n)$ to get $n$ colorings
  $$
    \chi_i : \hom_{\CC^*}(\calA_i, \calC_n) \to k, \quad \chi_i(f) = \chi(f), \quad i \in \{1, \ldots, n\}.
  $$
  Let us construct
  $$
    \chi'_i : \hom_{\CC^*}(\calA_i, \calC_i) \to k \quad \text{and} \quad w_i : \calC_{i-1} \to \calC_i, \quad
    i \in \{1, \ldots, n\}
  $$
  inductively as follows. First, put $\chi'_n = \chi_n$. Given $\chi'_i : \hom_{\CC^*}(\calA_i, \calC_i) \to k$, construct $w_i$
  by the Ramsey property: since $\calC_i \longrightarrow (\calC_{i-1})^{\calA_i}_{k, t_i}$, there is a $w_i : \calC_{i-1} \to \calC_i$
  such that
  $$
    |\chi'_i(w_i \cdot \hom_{\CC^*}(\calA_i, \calC_{i-1}))| \le t_i.
  $$
  Finally, given $w_i : \calC_{i-1} \to \calC_i$ define $\chi'_{i-1} : \hom_{\CC^*}(\calA_{i-1}, \calC_{i-1}) \to k$ by
  $$
    \chi'_{i-1}(f) = \chi_{i-1}(w_n \cdot \ldots \cdot w_i \cdot f).
  $$
  Let us show that
  $$
    |\chi(w_n \cdot \ldots \cdot w_1 \cdot \hom_\CC(A, B))| \le t_1 + \ldots + t_n.
  $$
  By Lemma~\ref{sbrd.lem.disj-union} we know that $\hom_\CC(A, B) = \bigcup_{i=1}^n \hom_{\CC^*}(\calA_i, \calB)$, so
  \begin{align*}
    |\chi(w_n \cdot \ldots \cdot w_1 \cdot \hom_\CC(A, B))|
    & = |\chi(w_n \cdot \ldots \cdot w_1 \cdot \bigcup_{i=1}^n \hom_{\CC^*}(\calA_i, \calB))| \\
    & = |\chi(\bigcup_{i=1}^n w_n \cdot \ldots \cdot w_1 \cdot \hom_{\CC^*}(\calA_i, \calB))| \\
    & = |\bigcup_{i=1}^n \chi(w_n \cdot \ldots \cdot w_1 \cdot \hom_{\CC^*}(\calA_i, \calB))| \\
    & \le \sum_{i=1}^n |\chi(w_n \cdot \ldots \cdot w_1  \cdot \hom_{\CC^*}(\calA_i, \calB))|.
  \end{align*}
  Since $\calB = \calC_0$ and by the constructions of $w_i$'s we have that
  $$
    w_n \cdot \ldots \cdot w_1 \cdot \hom_{\CC^*}(\calA_i, \calB) \subseteq \hom_{\CC^*}(\calA_i, \calC_n).
  $$
  Therefore,
  \begin{align*}
          |\chi(w_n \cdot \ldots \cdot w_1  \cdot \hom_{\CC^*}(\calA_i, \calB))|
    & = |\chi_i(w_n \cdot \ldots \cdot w_1  \cdot \hom_{\CC^*}(\calA_i, \calC_0))| \\
    & = |\chi'_i(w_i \cdot \ldots \cdot w_1 \cdot \hom_{\CC^*}(\calA_i, \calC_0))| \\
    & \le |\chi'_i(w_i \cdot \hom_{\CC^*}(\calA_i, \calC_{i-1}))| \le t_i.
  \end{align*}
  This completes the proof.
\end{proof}

\begin{LEM}\label{sbrd.lem.sml-xp-main}
  Let $U : \CC^* \to \CC$ be a reasonable expansion with unique restrictions which has the expansion property.
  Assume additionally that all the morphisms in $\CC$ are mono and that $\CC^*$ is a directed category. Let
  $A \in \Ob(\CC)$ be arbitrary, let $\calA_1, \ldots, \calA_n \in U^{-1}(A)$ be distinct and assume that
  $t_i = t_{\CC^*}(\calA_i) \in \NN$, $i \in \{1, \ldots, n\}$. Then
  $t_\CC(A) \ge \sum_{i=1}^n t_i$.
\end{LEM}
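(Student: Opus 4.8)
The plan is to establish the lower bound $t_\CC(A) \ge \sum_{i=1}^n t_i$ by exhibiting a single integer $k$ and a single object $B \in \Ob(\CC)$ such that \emph{every} $C \in \Ob(\CC)$ admits a colouring $\chi : \hom(A, C) \to k$ with $|\chi(w \cdot \hom(A, B))| \ge \sum_{i=1}^n t_i$ for all $w \in \hom(B, C)$. By the very definition of $t_\CC(A)$ this rules out $\sum_i t_i - 1$ as a Ramsey degree, giving the claim. The whole difficulty is to make one $B$ and one colouring scheme serve \emph{all} $C$ simultaneously.

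I would first unwind the minimality of each $t_i = t_{\CC^*}(\calA_i)$: since the degree $t_i - 1$ fails in $\CC^*$, there are $k_i \ge 2$ and an object $\calB_i$ with $\calA_i \toCCC \calB_i$ such that each $\calC' \in \Ob(\CC^*)$ carries a colouring $\psi^i_{\calC'} : \hom(\calA_i, \calC') \to k_i$ with $|\psi^i_{\calC'}(v \cdot \hom(\calA_i, \calB_i))| \ge t_i$ for all $v \in \hom(\calB_i, \calC')$. The key structural step is then to absorb all the witnesses $\calB_1, \ldots, \calB_n$ into the fibres over one object: using directedness of $\CC^*$ I pick $\calD$ with $\calB_i \toCCC \calD$ for all $i$, and then apply the expansion property in the form of Lemma~\ref{akpt.lem.EPEQ} to obtain a $B \in \Ob(\CC)$ such that $\calD \toCCC \calB$ — and hence $\calB_i \toCCC \calB$ — for \emph{every} $\calB \in U^{-1}(B)$ (applying $U$ also shows $A \toCC B$).

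With $B$ fixed I would take $k = k_1 + \cdots + k_n$, viewed as a disjoint union of blocks $\{K_{i-1}, \ldots, K_i - 1\}$ with $K_i = k_1 + \cdots + k_i$. Given an arbitrary $C$, I choose any expansion $\calC \in U^{-1}(C)$ and use the disjointness of Lemma~\ref{sbrd.lem.disj-union}, $\hom(A, C) = \bigsqcup_{\calA' \in U^{-1}(A)} \hom(\calA', \calC)$, to define $\chi$ block-by-block: on $\hom(\calA_i, \calC)$ I place the shift of $\psi^i_{\calC}$ into block $i$, and colour the remaining parts arbitrarily. For a morphism $w \in \hom(B, C)$ set $\calB_w = \restr{\calC}{w} \in U^{-1}(B)$; then $w \in \hom(\calB_w, \calC)$ and, by the previous step, there is $d^i_w \in \hom(\calB_i, \calB_w)$. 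The restriction calculus of Lemma~\ref{akpt.lem.restr} gives $w \cdot \hom(\calA_i, \calB_w) \subseteq \hom(\calA_i, \calC)$, so $\chi$ genuinely equals the shifted $\psi^i_{\calC}$ there, while $w \cdot d^i_w \in \hom(\calB_i, \calC)$. The witness property applied to $v = w \cdot d^i_w$ forces $|\psi^i_{\calC}((w \cdot d^i_w) \cdot \hom(\calA_i, \calB_i))| \ge t_i$, and since $(w \cdot d^i_w) \cdot \hom(\calA_i, \calB_i) \subseteq w \cdot \hom(\calA_i, \calB_w)$ this yields at least $t_i$ colours, all inside block $i$, on $w \cdot \hom(\calA_i, \calB_w)$.

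Because the blocks are pairwise disjoint and $w \cdot \hom(A, B) \supseteq \bigcup_{i} w \cdot \hom(\calA_i, \calB_w)$, these contributions add up to $|\chi(w \cdot \hom(A, B))| \ge \sum_i t_i$ for every $w$, which completes the argument. The main obstacle is exactly the construction in the second paragraph: arranging, through directedness followed by the expansion property, that a copy of every witness $\calB_i$ sits inside \emph{every} expansion $\calB_w$ of the single object $B$, since $\calB_w$ is forced upon us as $\restr{\calC}{w}$ and we have no control over it beyond knowing it to be some expansion of $B$. The degenerate case $t_i = 1$ requires only that $\hom(\calA_i, \calB_w) \ne \0$, which is guaranteed by $\calA_i \toCCC \calB_i \toCCC \calB_w$.
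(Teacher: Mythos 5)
Your proposal is correct and follows essentially the same route as the paper's proof: unwind the minimality of each $t_i$ to get witnesses $(k_i,\calB_i)$, use directedness of $\CC^*$ to collect the $\calB_i$ into one $\calD$, apply the expansion property to $U(\calD)$ to get the single object $B$ (the paper calls it $E$), and then colour $\hom(A,C)$ in disjoint blocks via Lemma~\ref{sbrd.lem.disj-union}, pulling the witness colourings back along $w\cdot v\cdot\iota_i$. The only cosmetic differences are that you compose $\calB_i\to\calD\to\restr{\calC}{w}$ into a single morphism $d^i_w$ and invoke the expansion property through Lemma~\ref{akpt.lem.EPEQ} rather than directly.
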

\begin{proof}
  Since $t_{\CC^*}(\calA_i) = t_i$, $i \in \{1, \ldots, n\}$, for every $i \in \{1, \ldots, n\}$
  there exists a $k_i \ge 2$ and a $\calB_i \in \Ob(\CC^*)$
  such that for every $\calC \in \Ob(\CC^*)$ one can find a coloring $\chi_i : \hom_{\CC^*}(\calA_i, \calC) \to k_i$ such that for every
  $w \in \hom_{\CC^*}(\calB_i, \calC)$ we have that $|\chi_i(w \cdot \hom_{\CC^*}(\calA_i, \calB_i))| \ge t_i$.
  
  Put $k = k_1 + \ldots + k_n$. Since $\CC^*$ is directed, there is a $\calD \in \Ob(\CC^*)$ such that
  $\calB_i \toCCC \calD$ for each $i$.
  Let $\iota_i \in \hom_{\CC^*}(\calB_i, \calD)$ be arbitrary, $i \in \{1, \ldots, n\}$. By the expansion property,
  for $U(\calD) \in \Ob(\CC)$ there is an $E \in \Ob(\CC)$ such that
  $\calD \toCCC \calE$ for all $\calE \in U^{-1}(E)$.
  
  Now, take any $C \in \Ob(\CC)$ such that $E \toCC C$ and any $\calC \in U^{-1}(C)$.
  For every $i \in \{1, \ldots, n\}$ choose a coloring $\chi_i : \hom_{\CC^*}(\calA_i, \calC) \to k_i$ such that for every
  $u \in \hom_{\CC^*}(\calB_i, \calC)$ we have that
  $$
    |\chi_i(u \cdot \hom_{\CC^*}(\calA_i, \calB_i))| \ge t_i.
  $$
  Construct $\chi : \hom_\CC(A, C) \to k = k_1 + \ldots + k_n$ as follows. Having in mind
  Lemma~\ref{sbrd.lem.disj-union},
  \begin{itemize}
  \item[] for $f \in \hom_{\CC^*}(\calA_1, \calC)$ put $\chi(f) = \chi_1(f)$;
  \item[] for $f \in \hom_{\CC^*}(\calA_2, \calC)$ put $\chi(f) = k_1 + \chi_2(f)$;
  \item[] \ \vdots
  \item[] for $f \in \hom_{\CC^*}(\calA_n, \calC)$ put $\chi(f) = k_1 + \ldots + k_{n-1} + \chi_n(f)$;
  \item[] for all other $f \in \hom_\CC(A, C)$ put $\chi(f) = 0$.
  \end{itemize}

  Let $w \in \hom_\CC(E, C)$ be arbitrary. Because $U : \CC^* \to \CC$ has unique restrictions there exists a
  unique $\calE = \restr{\calC}{w}$. Since $E$ was chosen by the expansion property and
  $U(\calE) = E$ there is a $v : \calD \to \calE$.
  
  \begin{center}
    \begin{tikzcd}[row sep=tiny]
      \calA_n & \calB_n \arrow[rd, "\iota_n"] & &  \\
      \vdots & \vdots & \calD \arrow[r, "v"] & \calE \arrow[ddd, mapsto, dashed, "U"'] \arrow[r, "w"] & \calC \arrow[ddd, mapsto, dashed, "U"] \\
      \calA_1 \arrow[dd, mapsto, dashed, "U"'] &  \calB_1 \arrow[ru, "\iota_1"']\\
      \ & \\
      A & & & E \arrow[r, "w"] & C
    \end{tikzcd}
  \end{center}
  
  Let us show that $|\chi(w \cdot \hom_\CC(A, E))| \ge t_1 + \ldots + t_n$. Note, first, that
  $$
    |\chi(w \cdot \hom_\CC(A, E))| \ge |\chi(\bigcup_{i=1}^n w \cdot v \cdot \iota_i \cdot \hom_{\CC^*}(\calA_i, \calB_i))|
  $$
  because
  \begin{align*}
    w \cdot \hom_\CC(A, E)
    & \supseteq w \cdot \bigcup_{i=1}^n \hom_{\CC^*}(\calA_i, \calE)\\
    & = \bigcup_{i=1}^n w \cdot \hom_{\CC^*}(\calA_i, \calE)
        \supseteq \bigcup_{i=1}^n w \cdot v \cdot \iota_i \cdot \hom_{\CC^*}(\calA_i, \calB_i).
  \end{align*}
  The sets $w \cdot v \cdot \iota_i \cdot \hom_{\CC^*}(\calA_i, \calB_i)$, $i \in \{1, \ldots, n\}$, are pairwise
  disjoint (since $w \cdot v \cdot \iota_i \cdot \hom_{\CC^*}(\calA_i, \calB_i) \subseteq \hom_{\CC^*}(\calA_i, \calC)$)
  and, by construction, on each of these sets $\chi$ takes disjoint sets of values (since
  $\hom_{\CC^*}(\calA_1, \calC) \subseteq \{0, \ldots, k_1-1\}$, 
  $\hom_{\CC^*}(\calA_2, \calC) \subseteq \{k_1, \ldots, k_1+k_2-1\}$, and so on). Therefore,
  $$
    |\chi(\bigcup_{i=1}^n w \cdot v \cdot \iota_i \cdot \hom_{\CC^*}(\calA_i, \calB_i))|
    = \sum_{i=1}^n |\chi(w \cdot v \cdot \iota_i \cdot \hom_{\CC^*}(\calA_i, \calB_i))|.
  $$
  As another consequence of the construction of $\chi$ we have that
  $$
    |\chi(w \cdot v \cdot \iota_i \cdot \hom_{\CC^*}(\calA_i, \calB_i))| =
    |\chi_i(w \cdot v \cdot \iota_i \cdot \hom_{\CC^*}(\calA_i, \calB_i))| \ge t_i
  $$
  for all $i \in \{1, \ldots, n\}$, which concludes the proof of the lemma.
\end{proof}

\begin{THM}\label{sbrd.thm.small2}
  Assume that all the morphisms in $\CC$ are mono and that $\CC^*$ is a directed category.
  Let $U : \CC^* \to \CC$ be a reasonable expansion with unique restrictions which has the expansion property.
  Then for any $A \in \Ob(\CC)$ we have the following:
  $$
    t_{\CC}(A) = \sum_{\calA \in U^{-1}(A)} t_{\CC^*}(\calA).
  $$
  Consequently, $t_{\CC}(A)$ is finite if and only if
  $U^{-1}(A)$ is finite and $t_{\CC^*}(\calA) < \infty$ for all $\calA \in U^{-1}(A)$.
\end{THM}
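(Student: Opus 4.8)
The plan is to pin down $t_\CC(A)$ by sandwiching it between the two bounds already in hand: the upper bound furnished by Theorem~\ref{sbrd.thm.small1} and the lower bound furnished by Lemma~\ref{sbrd.lem.sml-xp-main}. All the hypotheses of both results (morphisms mono, $\CC^*$ directed, $U$ a reasonable expansion with unique restrictions having the expansion property) are in force, so both are available. The ``consequently'' clause will then fall out by inspecting when the common value is finite.

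First I would settle the case in which the right-hand side is finite, i.e.\ $U^{-1}(A) = \{\calA_1, \ldots, \calA_n\}$ is finite and $t_{\CC^*}(\calA_i) \in \NN$ for each $i$. Here Theorem~\ref{sbrd.thm.small1} gives $t_\CC(A) \le \sum_{i=1}^n t_{\CC^*}(\calA_i)$, whereas Lemma~\ref{sbrd.lem.sml-xp-main}, applied to the complete list $\calA_1, \ldots, \calA_n$ of (distinct, finite-degree) expansions of $A$, gives the opposite inequality $t_\CC(A) \ge \sum_{i=1}^n t_{\CC^*}(\calA_i)$. The two bounds coincide, so equality holds and $t_\CC(A)$ is finite.

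It remains to handle the case in which the sum diverges, where I must prove $t_\CC(A) = \infty$, i.e.\ $t_\CC(A) \ge N$ for every $N \in \NN$. The sum can diverge in two ways. If every $\calA \in U^{-1}(A)$ has finite degree but $U^{-1}(A)$ is infinite, then for a fixed $N$ I would choose $N$ distinct expansions $\calA_1, \ldots, \calA_N$; since $t_{\CC^*}(\calA_i) \ge 1$ for each $i$ (by part~$(a)$ of the corollary to Proposition~\ref{rdbas.prop.sml}), Lemma~\ref{sbrd.lem.sml-xp-main} gives $t_\CC(A) \ge \sum_{i=1}^N t_{\CC^*}(\calA_i) \ge N$. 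The genuinely delicate subcase --- the one I expect to be the main obstacle --- is when some $\calA_0 \in U^{-1}(A)$ has $t_{\CC^*}(\calA_0) = \infty$, because then Lemma~\ref{sbrd.lem.sml-xp-main} cannot be quoted verbatim: it insists on the degrees being natural numbers. The resolution is to recall what infinite degree buys us and to re-run the lower-bound argument by hand. Concretely, $t_{\CC^*}(\calA_0) = \infty$ means that, for each $N$, the value $N-1$ fails to bound the degree of $\calA_0$, so there exist $k_0 \ge 2$ and $\calB_0 \in \Ob(\CC^*)$ such that every $\calC \in \Ob(\CC^*)$ carries a coloring $\chi_0 : \hom(\calA_0, \calC) \to k_0$ with $|\chi_0(w \cdot \hom(\calA_0, \calB_0))| \ge N$ for all $w \in \hom(\calB_0, \calC)$. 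This is precisely the witness consumed by the proof of Lemma~\ref{sbrd.lem.sml-xp-main} in the degenerate case of a single expansion with ``$t_1 = N$''; feeding it through the same machinery (directedness produces a $\calD$ with $\calB_0 \toCCC \calD$, the expansion property produces an $E$ with $\calD \toCCC \calE$ for all $\calE \in U^{-1}(E)$, and the coloring transported to $\hom(A, \cdot)$ forces at least $N$ colors on $\hom(A, E)$) shows $t_\CC(A) \ge N$. Since $N$ was arbitrary in either subcase, $t_\CC(A) = \infty$, which matches the divergent right-hand side and establishes the equality together with the ``consequently'' clause.
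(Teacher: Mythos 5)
Your proposal is correct and follows essentially the same route as the paper: the finite case is sandwiched between Theorem~\ref{sbrd.thm.small1} and Lemma~\ref{sbrd.lem.sml-xp-main}, the infinite-fiber case uses the lemma on $N$ distinct expansions, and the infinite-degree case is handled exactly as in the paper by re-running the lower-bound machinery with the witness extracted from $t_{\CC^*}(\calA_0)=\infty$ (the paper explicitly describes its third case as ``a modification of the proof of Lemma~\ref{sbrd.lem.sml-xp-main}''). The only cosmetic difference is your extra invocation of directedness to produce $\calD$ in the single-expansion subcase, which the paper skips since the expansion property applied to $U(\calB_0)$ already suffices.
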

\begin{proof}
  Clearly, it suffices to show the following three facts:
  \begin{itemize}
  \item[(1)]
    if $U^{-1}(A) = \{\calA_1, \calA_2, \ldots, \calA_n\}$ is finite and
    $t_{\CC^*}(\calA_i) < \infty$ for all $i$, then $t_{\CC}(A) = \sum_{i=1}^n t_{\CC^*}(\calA_i)$;
  \item[(2)]
    if $U^{-1}(A)$ is infinite and $t_{\CC^*}(\calA) < \infty$ for all $\calA \in U^{-1}(A)$
    $t_{\CC}(A) = \infty$; and
  \item[(3)]
    if there exists an $\calA \in U^{-1}(A)$ such that $t_{\CC^*}(\calA) = \infty$ then $t_{\CC}(A) = \infty$.
  \end{itemize}

  (1)
  Assume that $U^{-1}(A) = \{\calA_1, \calA_2, \ldots, \calA_n\}$ is finite and that $t_{\CC^*}(\calA_i) < \infty$ for all $i$.
  We have already seen (Theorem~\ref{sbrd.thm.small1}) that $t_{\CC}(A) \le \sum_{i=1}^n t_{\CC^*}(\calA_i)$, and
  that $t_{\CC}(A) \ge \sum_{i=1}^n t_{\CC^*}(\calA_i)$ (Lemma~\ref{sbrd.lem.sml-xp-main}).

  \bigskip
  
  (2)
  Assume that $U^{-1}(A)$ is infinite and that $t_{\CC^*}(\calA) < \infty$ for all $\calA \in U^{-1}(A)$.
  Let us show that $t_\CC(A) = \infty$ by showing that $t_\CC(A) \ge n$ for every $n \in \NN$.
  Fix an $n \in \NN$ and take $n$ distinct $\calA_1, \ldots, \calA_n \in U^{-1}(A)$.
  Then, by Lemma~\ref{sbrd.lem.sml-xp-main}, $t_{\CC}(A) \ge \sum_{i=1}^n t_{\CC^*}(\calA_i) \ge n$.
  
  \bigskip
  
  (3)
  Assume that there is an $\calA \in U^{-1}(A)$ with $t_{\CC^*}(\calA) = \infty$.
  Let us show that $t_\CC(A) = \infty$ by showing that $t_\CC(A) \ge n$ for every $n \in \NN$. Fix an $n \in \NN$.
  The proof is a modification of the proof of Lemma~\ref{sbrd.lem.sml-xp-main}.

  Since $t_{\CC^*}(\calA) = \infty$ there exists a $k \ge 2$ and a $\calB \in \Ob(\CC^*)$
  such that for every $\calC \in \Ob(\CC^*)$ one can find a coloring $\chi' : \hom_{\CC^*}(\calA, \calC) \to k$ such that for every
  $u \in \hom_{\CC^*}(\calB, \calC)$ we have that $|\chi'(u \cdot \hom_{\CC^*}(\calA, \calB))| \ge n$.
  By the expansion property, for $U(\calB) \in \Ob(\CC)$ there is an $E \in \Ob(\CC)$ such that
  $\calB \toCC \calE$ for all $\calE \in U^{-1}(E)$.
  
  Now, take any $C \in \Ob(\CC)$ such that $E \toCC C$ and any $\calC \in U^{-1}(C)$.
  Choose a coloring $\chi' : \hom_{\CC^*}(\calA, \calC) \to k$ such that for every
  $u \in \hom_{\CC^*}(\calB, \calC)$ we have that
  $
    |\chi'(u \cdot \hom_{\CC^*}(\calA, \calB))| \ge n
  $.
  Construct $\chi : \hom_\CC(A, C) \to k$ as follows:
  \begin{itemize}
  \item[] for $f \in \hom_{\CC^*}(\calA, \calC)$ put $\chi(f) = \chi'(f)$;
  \item[] for all other $f \in \hom_\CC(A, C)$ put $\chi(f) = 0$.
  \end{itemize}

  Let $w \in \hom_\CC(E, C)$ be arbitrary. Because $U : \CC^* \to \CC$ has unique restrictions there exists a
  unique $\calE = \restr{\calC}{w}$. Since $E$ was chosen by the expansion property and
  $U(\calE) = E$ there is a $v : \calB \to \calE$.
  \begin{center}
    \begin{tikzcd}[row sep=tiny]
      \calA \arrow[ddd, mapsto, dashed, "U"'] &  \calB \arrow[r, "v"] & \calE \arrow[ddd, mapsto, dashed, "U"'] \arrow[r, "w"] & \calC \arrow[ddd, mapsto, dashed, "U"] \\
      \ & \\
      \ & \\
      A & & E \arrow[r, "w"] & C
    \end{tikzcd}
  \end{center}
  In order to show that $|\chi(w \cdot \hom_\CC(A, E))| \ge n$ note, first, that
  $$
    |\chi(w \cdot \hom_\CC(A, E))| \ge |\chi(w \cdot v \cdot \hom_{\CC^*}(\calA, \calB))|.
  $$
  Since $w \cdot v \cdot \hom_{\CC^*}(\calA, \calB) \subseteq \hom_{\CC^*}(\calA, \calC)$
  we have that
  $$
    \chi(w \cdot v \cdot \hom_{\CC^*}(\calA, \calB)) = \chi'(w \cdot v \cdot \hom_{\CC^*}(\calA, \calB))
  $$
  so, by the choice of $\chi'$,
  $$
    |\chi(w \cdot \hom_\CC(A, E))| \ge |\chi'(w \cdot v \cdot \hom_{\CC^*}(\calA, \calB))| \ge n.
  $$
  This concludes the proof.
\end{proof}

\begin{COR}\label{sbrd.cor.small2-obj}
  Let $U : \CC^* \to \CC$ be a reasonable expansion with unique restrictions which has the expansion property.
  Assume additionally that all the morphisms in $\CC$ are mono and that $\CC^*$ is a directed category.
  Let $A \in \Ob(\CC)$ be such that $\Aut(A)$ is finite.
  
  $(a)$ $t^\sim_{\CC}(A)$ is finite if and only if $U^{-1}(A)$ is finite and $t^\sim_{\CC^*}(\calA) < \infty$ for all $\calA \in U^{-1}(A)$, and in that case
  $$
    t^\sim_{\CC}(A) = \sum_{\calA \in U^{-1}(A)} \frac{|\Aut(\calA)|}{|\Aut(A)|} \cdot t^\sim_{\CC^*}(\calA).
  $$
  
  $(b)$ Assume that $U^{-1}(A)$ is finite and $t^\sim_{\CC^*}(\calA) < \infty$ for all $\calA \in U^{-1}(A)$. Let
  $\calA_1$, \ldots, $\calA_n$ be representatives of isomorphism classes of objects in $U^{-1}(A)$.
  Then
  $$
    t^\sim_{\CC}(A) = \sum_{i=1}^n t^\sim_{\CC^*}(\calA_i).
  $$
\end{COR}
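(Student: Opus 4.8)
The plan is to reduce both parts to the additive formula for object-indexed degrees already established in Theorem~\ref{sbrd.thm.small2}, using the conversion between $t$ and $t^\sim$ from Proposition~\ref{rdbas.prop.sml}. Two preliminary observations make this legitimate. First, the morphisms of $\CC^*$ are mono: if $f \in \hom(\calA, \calB)$ and $f \cdot g = f \cdot h$ in $\CC^*$, then applying $U$ and using that $U(f)$ is mono in $\CC$ gives $U(g) = U(h)$, whence $g = h$ since $U$ is injective on hom-sets. Second, for every $\calA \in U^{-1}(A)$ we have $\Aut_{\CC^*}(\calA) \subseteq \Aut_\CC(A)$, so the hypothesis that $\Aut(A)$ is finite forces $\Aut(\calA)$ to be finite as well. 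Hence Proposition~\ref{rdbas.prop.sml} applies to $A$ in $\CC$ and to each $\calA$ in $\CC^*$, giving $t_\CC(A) = |\Aut(A)| \cdot t^\sim_\CC(A)$ and $t_{\CC^*}(\calA) = |\Aut(\calA)| \cdot t^\sim_{\CC^*}(\calA)$ whenever the relevant quantities are finite.

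For the equivalence in $(a)$ I would chain the following: since $\Aut(A)$ is finite, Proposition~\ref{rdbas.prop.sml} gives that $t^\sim_\CC(A)$ is finite iff $t_\CC(A)$ is finite; by Theorem~\ref{sbrd.thm.small2} the latter holds iff $U^{-1}(A)$ is finite and $t_{\CC^*}(\calA) < \infty$ for all $\calA \in U^{-1}(A)$; and since each $\Aut(\calA)$ is finite, a further application of Proposition~\ref{rdbas.prop.sml} in $\CC^*$ shows $t_{\CC^*}(\calA) < \infty$ iff $t^\sim_{\CC^*}(\calA) < \infty$. For the formula, substitute the two conversion identities into the equation $t_\CC(A) = \sum_{\calA \in U^{-1}(A)} t_{\CC^*}(\calA)$ of Theorem~\ref{sbrd.thm.small2} to obtain
$$
  |\Aut(A)| \cdot t^\sim_\CC(A) = \sum_{\calA \in U^{-1}(A)} |\Aut(\calA)| \cdot t^\sim_{\CC^*}(\calA),
$$
and divide by $|\Aut(A)|$.

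For $(b)$ I would group the sum in $(a)$ according to $\CC^*$-isomorphism type within the fibre $U^{-1}(A)$. The two ingredients are that $t^\sim_{\CC^*}(\calA)$ and $|\Aut(\calA)|$ depend only on the isomorphism class of $\calA$ (both follow by transporting hom-sets and automorphisms along a fixed isomorphism, which respects the relation $\sim$ and all the Ramsey arrows), and Lemma~\ref{sbrd.lem.iso-disj-union}$(b)$, which tells us that the number of objects in $U^{-1}(A)$ isomorphic to a representative $\calA_i$ equals $|\Aut(A)| / |\Aut(\calA_i)|$ (note that $U^{-1}(A)$ is finite here, so the index set in that lemma is finite). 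Thus the block of the sum coming from the class of $\calA_i$ contributes
$$
  \frac{|\Aut(A)|}{|\Aut(\calA_i)|} \cdot \frac{|\Aut(\calA_i)|}{|\Aut(A)|} \cdot t^\sim_{\CC^*}(\calA_i) = t^\sim_{\CC^*}(\calA_i),
$$
and summing over $\calA_1, \ldots, \calA_n$ yields $t^\sim_\CC(A) = \sum_{i=1}^n t^\sim_{\CC^*}(\calA_i)$.

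The only genuinely non-mechanical point is the isomorphism-invariance of $t^\sim_{\CC^*}$ and $|\Aut|$ used in $(b)$, since the paper does not record it explicitly; everything else is bookkeeping. I expect the main care to be needed in checking that the count from Lemma~\ref{sbrd.lem.iso-disj-union}$(b)$ matches the weight $|\Aut(\calA)|/|\Aut(A)|$ appearing in $(a)$ \emph{exactly}, so that the automorphism factors cancel and the clean sum over isomorphism representatives emerges.
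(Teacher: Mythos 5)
Your proposal is correct and follows essentially the same route as the paper: both parts reduce to Theorem~\ref{sbrd.thm.small2} via the conversion $t = |\Aut| \cdot t^\sim$ of Proposition~\ref{rdbas.prop.sml}, and part $(b)$ is obtained by grouping the fibre $U^{-1}(A)$ into isomorphism classes whose sizes are counted by Lemma~\ref{sbrd.lem.iso-disj-union}$(b)$. Your explicit checks that the morphisms of $\CC^*$ are mono and that $t^\sim_{\CC^*}$ and $|\Aut|$ are isomorphism invariants are points the paper leaves tacit, but they are exactly the right things to verify.
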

\begin{proof}
  $(a)$
  Since $\Aut(A)$ is finite, Proposition~\ref{rdbas.prop.sml} implies that
  $t^\sim_{\CC}(A)$ is finite if and only if $t_{\CC}(A)$ is finite.
  Moreover, $\Aut(\calA)$ is finite for all $\calA \in U^{-1}(A)$ because $\Aut(\calA) \subseteq \Aut(A)$.
  
  $(\Leftarrow)$
  Assume, first, that $t_\CC(A)$ is not finite. Then $t_{\CC}(A)$ is not finite, so by Theorem~\ref{sbrd.thm.small2},
  $U^{-1}(A)$ is not finite or there is an $\calA \in U^{-1}(A)$ such that $t_{\CC^*}(\calA)$ is not finite.
  The remark at the beginning of the proof then implies that $U^{-1}(A)$ is not finite or there is an $\calA \in U^{-1}(A)$
  such that $t^\sim_{\CC^*}(\calA)$ is not finite.

  $(\Rightarrow)$
  Assume, now, that $t^\sim_\CC(A)$ is finite. Then $t_{\CC}(A)$ is finite, so by Theorem~\ref{sbrd.thm.small2},
  $U^{-1}(A)$ is finite, say $U^{-1}(A) = \{\calA_1, \ldots, \calA_n\}$, and
  $$
    t_{\CC}(A) = \sum_{i=1}^n t_{\CC^*}(\calA_i).
  $$
  By Proposition~\ref{rdbas.prop.sml} we get:
  $$
    |\Aut(A)| \cdot t^\sim_{\CC}(A) = \sum_{i=1}^n |\Aut(\calA_i)| \cdot  t^\sim_{\CC^*}(\calA_i),
  $$
  whence the claim of the corollary follows after dividing by $|\Aut(A)|$.
  
  $(b)$
  By the assumption, $U^{-1}(A) / \Boxed\cong = \{\calA_1 / \Boxed\cong, \ldots, \calA_n / \Boxed\cong \}$. Then
  \begin{align*}
    t^\sim_{\CC}(A)
    &= \sum_{\calA \in U^{-1}(A)} \frac{|\Aut(\calA)|}{|\Aut(A)|} \cdot t^\sim_{\CC^*}(\calA) && \text{by } (a)\\
    &= \sum_{i=1}^n |\calA_i / \Boxed\cong| \cdot \frac{|\Aut(\calA_i)|}{|\Aut(A)|} \cdot t^\sim_{\CC^*}(\calA_i) \\
    &= \sum_{i=1}^n t^\sim_{\CC^*}(\calA_i) && \text{by Lemma~\ref{sbrd.lem.iso-disj-union}}.\square
  \end{align*}
\end{proof}

We are now ready to prove the second main result of the paper:

\begin{THM}\label{akpt.thm.main2} (cf.\ \cite{zucker,vanthe-finramdeg})
  Let $\CC$ be a locally small category and
  let $\CCfin$ be a full subcategory of $\CC$ such that (C1) -- (C5) hold.
  Let $F \in \Ob(\CC)$ be a homogeneous locally finite object, and let $\AAA$ be the full subcategory of
  $\CCfin$ spanned by $\Age(F)$. Then the following are equivalent:
  
  (1) $\AAA$ has finite Ramsey degrees.
  
  (2) There is a reasonable precompact expansion with unique restrictions $U : \CC^* \to \CC$ and
  a full subcategory $\AAA^*$ of $\CC^*$ which is directed, has the Ramsey property and
  $\restr{U}{\AAA^*} : \AAA^* \to \AAA$ has the expansion property.
\end{THM}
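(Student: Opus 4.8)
The plan is to establish the two implications by quite different routes. The implication $(2)\Rightarrow(1)$ is essentially a corollary of the additivity of Ramsey degrees, whereas $(1)\Rightarrow(2)$ forces us to manufacture the expansion explicitly from the essential colourings supplied by Proposition~\ref{akpt.prop.glob-fin}. For $(2)\Rightarrow(1)$ I would feed the given data into Theorem~\ref{sbrd.thm.small2} applied to the restricted forgetful functor $V=\restr{U}{\AAA^*}\colon\AAA^*\to\AAA$. All morphisms of $\AAA$ are mono by~(C1), $\AAA^*$ is directed and $V$ has the expansion property by hypothesis, and $V$ inherits reasonableness and unique restrictions from $U$ (the one point to check is that $\AAA^*$ is closed under restrictions, which holds whenever $\AAA^*$ is an age; one may take it so). Theorem~\ref{sbrd.thm.small2} then gives $t_{\AAA}(A)=\sum_{\calA\in V^{-1}(A)}t_{\AAA^*}(\calA)$ for every $A\in\Ob(\AAA)$. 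Since $\AAA^*$ has the Ramsey property each summand equals $1$, and since $U$ is precompact the fibre $V^{-1}(A)\subseteq U^{-1}(A)$ is finite; hence $t_{\AAA}(A)=|V^{-1}(A)|<\infty$. (If one prefers, finiteness alone follows from the inequality half, Theorem~\ref{sbrd.thm.small1}.)

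For $(1)\Rightarrow(2)$ I would first build a ``universal'' expansion recording all colourings. Declare the objects of $\CC^*$ to be pairs $(C,\chi)$ with $\chi=(\chi_A)_{A\in\Ob(\AAA)}$ a family of maps $\chi_A\colon\hom(A,C)\to t_{\AAA}(A)$, and let a morphism $(B,\psi)\to(C,\chi)$ be an $f\in\hom(B,C)$ with $\psi_A(m)=\chi_A(f\cdot m)$ for all $A$ and all $m$; set $U(C,\chi)=C$. Then $U$ has unique restrictions (the defining condition forces $\restr{(C,\chi)}{f}=(B,f^{*}\chi)$), is reasonable (a colouring on the image of a mono $f$ extends to the codomain, using~(C1)), separates points, and is precompact: for $A\in\Ob(\CCfin)$ only finitely many $A'$ satisfy $A'\toCC A$ by~(C5), each $\hom(A',A)$ is finite by~(C3), and each $t_{\AAA}(A')$ is finite by~(1). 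By Lemma~\ref{akpt.lem.age-C*} the pair $(\CC^*,\CCfin^*)$ satisfies (C1)--(C5), and $U^{-1}(F)=\prod_{A}t_{\AAA}(A)^{\hom(A,F)}$ is a product of finite discrete spaces, hence compact, with $\sigma_F$ (Proposition~\ref{akpt.prop.top-U-1F}) its product topology.

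Now I invoke Proposition~\ref{akpt.prop.glob-fin} to obtain essential colourings $\gamma_A\colon\hom(A,F)\to t_{\AAA}(A)$, set $\gamma=(\gamma_A)_A$ and $\calF=(F,\gamma)\in U^{-1}(F)$, and put $\AAA^*=\Age(\calF)$. Since $F$ is locally finite, Lemma~\ref{akpt.lem.loc-fin-U} makes $\calF$ locally finite, which forces $\AAA^*$ to be directed (two copies $\restr{\calF}{e},\restr{\calF}{f}$ amalgamate inside a common $\calD\toCCC\calF$). The Ramsey property of $\AAA^*$ I would derive from Lemma~\ref{akpt.lem.ramseyF} by showing $t_{\AAA^*}(\calA)=1$: a single-colour $\lambda$ is essential at $\calB$ precisely when every $\chi\colon\hom(\calA,\calF)\to k$ is constant on some $w\cdot\hom(\calA,\calB)$, and this is what essentiality of the $\gamma_A$ yields once the target colouring is combined with the type-recording colouring $e\mapsto e^{*}\gamma$. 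The delicate point is that the witnessing copy produced by essentiality must be colour-preserving at the level of $\calB$ (a genuine morphism of $\CC^*$), not merely at the level of $A$-copies; feeding the expansion-type data into the essentiality condition is what secures this.

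Finally, for the expansion property I would use Proposition~\ref{akpt.prop.EP-2}: as $U^{-1}(F)$ is compact, $\restr{U}{\AAA^*}$ has the expansion property if and only if $\Age(\calF)\subseteq\Age(\calF')$ for every $\calF'\in\overline{\calF^G}$, where $G=\Aut(F)$. The reverse inclusion $\Age(\calF')\subseteq\Age(\calF)$ is automatic by Lemma~\ref{akpt.lem.EP-1}, so the expansion property is equivalent to $\overline{\calF^G}$ being a minimal $\Aut(F)$-flow, all of whose points share the age of $\calF$. \emph{This is the main obstacle.} I expect to prove it from the minimality of the colour count: because each $\gamma_A$ uses exactly $t_{\AAA}(A)$ colours, no colour class can be avoided cofinally, for otherwise a recolouring would realise a smaller Ramsey degree, contradicting the definition of $t_{\AAA}(A)$; hence every expansion type occurring in $\calF$ recurs in every limit point of its orbit. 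Equivalently, one may pass to a minimal subflow $M\ni\calF^{*}$, observe via Lemma~\ref{akpt.lem.EP-1} (applied in both directions, using $M=\overline{(\calF')^{G}}$ for each $\calF'\in M$) that all points of $M$ share a single age, and then re-establish the Ramsey property for $\Age(\calF^{*})$; reconciling the essential colouring with minimality of the flow is the crux of the whole argument.
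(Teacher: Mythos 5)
Your construction of $\CC^*$, your treatment of $(2)\Rightarrow(1)$ via Theorem~\ref{sbrd.thm.small2}, and your topological setup (compactness of $U^{-1}(F)$, the logical action) all match the paper. But the heart of $(1)\Rightarrow(2)$ --- establishing the Ramsey property and the expansion property for $\AAA^*$ --- is exactly the part you flag as ``the main obstacle'' and ``the crux of the whole argument,'' and neither of your two sketches closes it. Your first sketch takes $\AAA^*=\Age(\calF_\gamma)$ and tries to extract the Ramsey property directly from essentiality via Lemma~\ref{akpt.lem.ramseyF}; this founders on the point you yourself identify: essentiality of $\gamma_A$ produces a witness $w\in\hom(B,F)$ controlling colourings of $\hom(A,\cdot)$ for one $A$ at a time, whereas membership in $\hom(\calA,\calB)$ is determined by \emph{all} the colourings $\gamma_{A'}$ simultaneously, and the witness need not be a morphism of $\CC^*$ at all. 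Your second sketch (``no colour class can be avoided cofinally, for otherwise a recolouring would realise a smaller Ramsey degree'') is not an argument: essentiality does not imply that $\overline{\calF_\gamma^G}$ is a minimal flow, and the paper never claims it does.

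The missing idea is to decouple the two issues and resolve them in the opposite order from what you attempt. First pass to some $\calF^*\in\overline{\calF_\gamma^G}$ whose orbit closure $\overline{(\calF^*)^G}$ is minimal (this exists by compactness, with no reference to essentiality), and set $\AAA^*=\Age(\calF^*)$. The expansion property is then \emph{free}: minimality gives $\calF^*\in\overline{(\calF')^G}$ for every $\calF'\in\overline{(\calF^*)^G}$, so Lemma~\ref{akpt.lem.EP-1} yields $\Age(\calF^*)\subseteq\Age(\calF')$, which is precisely the criterion of Proposition~\ref{akpt.prop.EP-2}; no reconciliation with the essential colouring is needed here. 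The Ramsey property is then obtained by the counting argument you only use in one direction: essentiality of $\gamma_A$ together with $\Age(\calF^*)\subseteq\Age(\calF_\gamma)$ (Lemma~\ref{akpt.lem.EP-1} again) forces the colouring $(\phi^*_A)^{(e)}$ to be a surjective $t_\AAA(A)$-colouring, so each $A$ has at least $t_\AAA(A)$ distinct expansions in $\AAA^*$ (Lemma~\ref{akpt.lem.mintexp}); since the expansion property is now available, Theorem~\ref{sbrd.thm.small2} applies to $\restr{U}{\AAA^*}$ and gives $t_\AAA(A)=\sum_{\calA}t_{\AAA^*}(\calA)$ over a fibre of size at least $t_\AAA(A)$, which forces every summand to equal~$1$. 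Without this sandwich your proof of the Ramsey property does not go through, so as written the proposal has a genuine gap.
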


The implication $(2) \Rightarrow (1)$ follows immediately from Theorem~\ref{sbrd.thm.small2}.
Therefore, the rest of the section is devoted to proving that $(1) \Rightarrow (2)$.
The construction we are going to present is a generalization of the construction presented in~\cite{vanthe-finramdeg}.

\bigskip

\noindent
\textbf{Standing assumption.} For the remainder of this section
fix a locally small category $\CC$ and its full subcategory $\CCfin$ such that (C1) -- (C5) hold.
Fix a homogeneous locally finite object $F \in \Ob(\CC)$, let $G = \Aut(F)$ and let
$\AAA$ be the full subcategory of $\CCfin$ spanned by $\Age(F)$. Assume that every
$A \in \Ob(\AAA)$ has a finite Ramsey degree $t_\AAA(A)$.

\bigskip

Let $\CC^*$ be the category whose objects are pairs $\calC = (C, \theta)$ where
$C \in \Ob(\CC)$ and $\theta = (\theta_A)_{A \in \Ob(\AAA)}$ is a family of colorings
$$
  \theta_A : \hom(A, C) \to t_\AAA(A)
$$
indexed by the objects of $\AAA$. For such a family $\theta = (\theta_A)_{A \in \Ob(\AAA)}$ and
an $f \in \hom(A, C)$ we let $\theta(f) = \theta_A(f)$.

Morphisms in $\CC^*$ are morphisms from $\CC$ that preserve colorings. More precisely,
$f$ is a morphism from $\calC = (C, \theta)$ to $\calD = (D, \delta)$ in $\CC^*$
if $f \in \hom(C, D)$ and
$$
  \delta(f \cdot e) = \theta(e), \text{ for all } e \in \UNION_{A \in \Ob(\AAA)} \hom(A, F).
$$
Let $U : \CC^* \to \CC$ be the obvious forgetful functor $(C, \theta) \mapsto C$ and $f \mapsto f$,
and let $\CCfin^*$ be the full subcategory of $\CC^*$ spanned by expansions
of objects from $\CCfin$.

\begin{LEM}\label{akpt.lem.OmegaFin}
  The categories $\CC^*$ and $\CCfin^*$ satisfy (C1) -- (C5) and
  $U : \CC^* \to \CC$ is a reasonable precompact expansion with unique restrictions which separates points.
\end{LEM}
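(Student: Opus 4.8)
The plan is to verify the structural properties of $U$ directly from the definition of $\CC^*$, and then to obtain (C1)--(C5) essentially for free by invoking Lemma~\ref{akpt.lem.age-C*}: since $\Ob(\CCfin^*) = \UNION\{U^{-1}(C) : C \in \Ob(\CCfin)\}$ matches exactly the hypothesis of that lemma, once $U$ is known to be a reasonable precompact expansion with unique restrictions, (C1)--(C5) follow immediately. So the real content is to establish that $U$ is a reasonable precompact expansion with unique restrictions which separates points.

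First I would check that $U$ is an expansion. It is the identity on underlying morphisms (composition preserves the coloring-preservation condition, so $\CC^*$ is a genuine category and $U$ a functor), hence injective on hom-sets; and it is surjective on objects because every $C \in \Ob(\CC)$ carries the expansion $(C,\theta)$ with each $\theta_A$ the constant $0$ coloring. Next I establish unique restrictions, which also supplies the explicit restriction operation used later. Given $\calB = (B,\delta)$ and $e \in \hom(A,B)$, any $\calA = (A,\theta) \in U^{-1}(A)$ with $e \in \hom(\calA,\calB)$ must satisfy $\theta(g) = \delta(e \cdot g)$ for every $g \in \hom(A',A)$, $A' \in \Ob(\AAA)$; this formula determines $\theta$ uniquely and, conversely, defines a legitimate expansion (the values $\delta(e \cdot g)$ land in $t_\AAA(A')$), so $\restr{\calB}{e} = (A,\theta)$ exists and is unique.

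Reasonableness is the dual construction: given $\calA = (A,\theta)$ and $e \in \hom(A,B)$, I push the colorings forward by setting $\delta_{A'}(h) = \theta_{A'}(g)$ when $h = e \cdot g$ for some $g \in \hom(A',A)$, and $\delta_{A'}(h) = 0$ otherwise; this is well defined precisely because $e$ is mono (so such a $g$ is unique when it exists), and then $e \in \hom(\calA,(B,\delta))$. For separating points, if $\calF_1 = (F,\theta^1) \ne \calF_2 = (F,\theta^2)$, then $\theta^1$ and $\theta^2$ differ at some $e \in \hom(A,F)$ with $A \in \Ob(\AAA) \subseteq \Ob(\CCfin)$; evaluating the two restrictions $\restr{\calF_1}{e}$ and $\restr{\calF_2}{e}$ at $\id_A$ recovers $\theta^1(e) \ne \theta^2(e)$, so these restrictions are distinct, as required.

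The main obstacle is precompactness, because the expanding datum $\theta = (\theta_A)_{A \in \Ob(\AAA)}$ is indexed by the possibly infinite set $\Ob(\AAA)$. That $U^{-1}(C)$ is a set for every $C \in \Ob(\CC)$ is clear, since it injects into the product $\prod_{A \in \Ob(\AAA)} t_\AAA(A)^{\hom(A,C)}$ of sets taken over the set $\Ob(\AAA)$. The finiteness of $U^{-1}(C)$ for $C \in \Ob(\CCfin)$ is the crux: here I would argue that for every $A \in \Ob(\AAA)$ with $\hom(A,C) = \0$ the coloring $\theta_A$ is the unique empty function; that by (C5) only finitely many $A \in \Ob(\AAA) \subseteq \Ob(\CCfin)$ satisfy $A \toCC C$; and that for each such $A$ the set $\hom(A,C)$ is finite by (C3) while $t_\AAA(A)$ is a finite color set. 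Hence only finitely many admissible families $\theta$ exist. With precompactness in hand, Lemma~\ref{akpt.lem.age-C*} applies verbatim and delivers (C1)--(C5), completing the proof.
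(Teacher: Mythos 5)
Your proof is correct, and the substantive parts --- the explicit restriction formula $\theta_{A'}(g) = \delta_{A'}(e \cdot g)$, the pushforward construction for reasonableness (well-defined because morphisms are mono), separation of points by evaluating restrictions at $\id_A$, and precompactness via (C3) and (C5) applied to the finitely many $A \in \Ob(\AAA)$ with $A \toCC C$ --- coincide with the paper's argument. The one organizational difference is that you obtain (C1)--(C5) by citing Lemma~\ref{akpt.lem.age-C*}, whose hypotheses match exactly since $\Ob(\CCfin^*) = \UNION\{U^{-1}(A) : A \in \Ob(\CCfin)\}$, whereas the paper verifies these conditions inline (noting (C1) and (C3) are inherited, deducing (C4) from unique restrictions and (C5) from precompactness), which amounts to re-deriving the content of that lemma. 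Your route is the tidier one: it isolates the only genuinely new work, namely checking that this particular $U$ is a reasonable precompact expansion with unique restrictions that separates points, and lets the general lemma do the rest.
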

\begin{proof}
  The categories $\CC^*$ and $\CCfin^*$ clearly satisfy (C1) and (C3).
  Also, $U$ is clearly a forgetful functor. The fact that $U$ is reasonable and has unique restrictions
  follows immediately from the way morphisms in $\CC^*$ are defined, while (C4) for $\CC^*$ and $\CCfin^*$
  follows from existence of unique restrictions.

  Let us show that $U$ is a precompact expansion, which at the same time proves that (2) is satisfied.
  Clearly, $U^{-1}(F)$ is a set for all $F \in \Ob(\CC)$.
  Take any $B \in \Ob(\CCfin)$ and any $(B, \beta) \in U^{-1}(B)$.
  According to (C3) and (C5) the set $\hom(A, B)$ is finite for all $A \in \Ob(\AAA)$
  and there are only finitely many objects $A \in \Ob(\AAA)$ such that $\hom(A, B) \ne \0$.
  Therefore, only finitely many of the $\beta_A$'s are nonempty, and for each nonempty $\beta_A$ we have
  only finitely many choices as $\beta_A \in t_\AAA(A)^{\hom(A, B)}$, which is finite.
  
  The fact that $U$ is precompact yields immediately that (C5) holds for $\CC^*$ and $\CCfin^*$.
  
  Finally, let us show that $U$ separates points. Take any $C \in \Ob(\CC)$,
  and any $\calC_1 = (C, \theta_1)$ and $\calC_2 = (C, \theta_2)$ in $\Ob(\CC^*)$.
  Assume that for every $A \in \Ob(\CCfin)$ and every $e \in \hom(A, C)$ we have that
  $\restr{\calC_1}{e} = \restr{\calC_2}{e}$, and let us show that then $\theta_1 = \theta_2$.
  Let $\theta_1(p) = i$ for some $p \in \hom(A, C)$ where $A \in \Ob(\AAA)$. We know that
  $\restr{\calC_1}{p} = \restr{\calC_2}{p}$, so let $\restr{\calC_1}{p} = \restr{\calC_2}{p} = \calA = (A, \alpha)$.
  From $p \in \hom(\calA, \calC_1)$ and $\theta_1(p) = i$ it follows that
  $\alpha(\id_A) = i$. On the other hand, from $p \in \hom(\calA, \calC_2)$ it follows that
  $\theta_2(p) = \alpha(\id_A) = i$.
\end{proof}

\begin{LEM}\label{akpt.lem.U-1compact}
  $U^{-1}(F)$ endowed with the topology $\sigma_F$ (see Proposition~\ref{akpt.prop.top-U-1F})
  is a compact Hausdorff space.
\end{LEM}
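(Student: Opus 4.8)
The plan is to show that $\sigma_F$ coincides with the product (Tychonoff) topology under the natural identification of $U^{-1}(F)$ with a product of finite discrete spaces, and then to invoke Tychonoff's theorem. Set-theoretically, an element of $U^{-1}(F)$ is a pair $(F, \theta)$ with $\theta = (\theta_A)_{A \in \Ob(\AAA)}$ and $\theta_A : \hom(A, F) \to t_\AAA(A)$, so $U^{-1}(F)$ is in natural bijection with $P = \prod_{A \in \Ob(\AAA)} t_\AAA(A)^{\hom(A, F)}$. Each factor $t_\AAA(A)^{\hom(A, F)}$ is a product of copies of the finite discrete space $t_\AAA(A)$ (finite by the standing assumption $t_\AAA(A) < \infty$, and indexed by a set since $\CC$ is locally small), and $\Ob(\AAA)$ is a set by (C2); hence $P$ with the product topology is compact by Tychonoff's theorem. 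Hausdorffness of $(U^{-1}(F), \sigma_F)$ is already delivered by Proposition~\ref{akpt.prop.top-U-1F}$(b)$, using that $F$ is locally finite and, by Lemma~\ref{akpt.lem.OmegaFin}, that $U$ is a reasonable precompact expansion with unique restrictions which separates points. So everything reduces to identifying $\sigma_F$ with the product topology on $P$.

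First I would check that every basic set $N(e, \calA) \in \calS_F$ is open in the product topology. Writing $\calA = (A, \alpha)$ with $A = U(\calA) \in \Ob(\CCfin)$, the condition $(F, \theta) \in N(e, \calA)$ unwinds to $\theta_{A'}(e \cdot q) = \alpha_{A'}(q)$ for every $A' \in \Ob(\AAA)$ and every $q \in \hom(A', A)$. By (C5) only finitely many $A'$ admit a morphism into $A$, and by (C3) each $\hom(A', A)$ is finite, so this is a finite list of coordinate constraints; thus $N(e, \calA)$ is a basic (indeed clopen) set of the product topology.

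The reverse inclusion is the crux. Given $\calF = (F, \theta)$ and a basic product-neighbourhood $W$ of $\calF$ fixing finitely many coordinates $\theta_{A_1}(g_1), \ldots, \theta_{A_m}(g_m)$ (with $g_j \in \hom(A_j, F)$, $A_j \in \Ob(\AAA)$), I would use local finiteness of $F$ to amalgamate the morphisms $g_1, \ldots, g_m$ into a single span: iterating the defining property of \emph{locally finite} (trivial for $m \le 1$) yields a $D \in \Ob(\CCfin)$, a morphism $r \in \hom(D, F)$ and morphisms $p_j \in \hom(A_j, D)$ with $r \cdot p_j = g_j$ for all $j$; note $D \in \Age(F) = \Ob(\AAA)$ since $r : D \to F$. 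Setting $\calD = \restr{\calF}{r}$ we have $\calF \in N(r, \calD)$, and I claim $N(r, \calD) \subseteq W$: if $\calF' = (F, \theta') \in N(r, \calD)$ then $r \in \hom(\calD, \calF')$ forces $\theta'(r \cdot q) = \theta(r \cdot q)$ for all $q$ into $D$, and specialising to $q = p_j$ gives $\theta'(g_j) = \theta(g_j)$ for every $j$, whence $\calF' \in W$. Thus $W$ is $\sigma_F$-open.

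This establishes $\sigma_F = $ the product topology on $P$, so $(U^{-1}(F), \sigma_F)$ is compact (Tychonoff) and Hausdorff (Proposition~\ref{akpt.prop.top-U-1F}$(b)$). The main obstacle is precisely the last step: one needs local finiteness to replace the finite family $g_1, \ldots, g_m$ by a single $r : D \to F$ through which they all factor, since this is exactly what lets one basic set $N(r, \calD)$ fit inside an arbitrary product-basic neighbourhood. The only routine care required is the iteration of the binary amalgamation property to an $m$-ary one.
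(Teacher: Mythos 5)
Your proof is correct and takes essentially the same route as the paper: identify $U^{-1}(F)$ with the product $\prod_{A \in \Ob(\AAA)} t_\AAA(A)^{\hom(A,F)}$, invoke Tychonoff, and compare $\sigma_F$ with the product topology. The only difference is that the paper proves just the inclusion you establish first --- each $N(e,\calA)$ imposes only finitely many coordinate constraints by (C3) and (C5), so $\sigma_F$ is coarser than the (compact) product topology, which already yields compactness --- whereas the reverse inclusion, which you single out as the crux and which is where you spend the local-finiteness amalgamation argument, is correct but not needed for the statement.
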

\begin{proof}
  We already know (Proposition~\ref{akpt.prop.top-U-1F}~$(b)$) that
  $(U^{-1}(F), \sigma_F)$ is a Hausdorff space, so let us show compactness.
  Let $V = \prod_{A \in \Ob(\AAA)} t_\AAA(A)^{\hom(A, F)}$.
  The mapping $\xi : U^{-1}(F) \to V : (F, \phi) \mapsto \phi$ is clearly bijective.
  The space $V$ together with the product topology is compact but the space $U^{-1}(F)$,
  on the other hand, is topologized by $\sigma_F$. So,
  in order to prove that $(U^{-1}(F), \sigma_F)$ is also compact it suffices to show that
  $\xi$ is open. Let
  $$
    N(e, \calB) = \{ \calF \in U^{-1}(F) : e \in \hom(\calB, \calF) \},
  $$
  be a basic open set in~$\sigma_F$ where $B \in \Ob(\CCfin)$, $e \in \hom(B, F)$ and $\calB = (B, \beta) \in U^{-1}(B)$.
  Since $A, B \in \Ob(\CCfin)$, as in the proof of Lemma~\ref{akpt.lem.OmegaFin} we conclude that
  $\UNION_{A \in \Ob(\AAA)} \hom(A, B)$ is finite. Let
  $
    \UNION_{A \in \Ob(\AAA)} \hom(A, B) = \{f_0, \ldots, f_{n-1}\}
  $
  and let $\beta(f_i) = c_i$, $i < n$. From the way morphisms in $\CC^*$ are defined it follows immediately that
  $$
    N(e, \calB) = \{ (F, \phi) \in U^{-1}(F) : \phi(e \cdot f_i) = c_i, i < n \},
  $$
  whose image under $\xi$ is clearly open in the product topology on~$V$. Therefore, $\xi$ is open.
\end{proof}

Having in mind the Standing assumptions for this section, Proposition~\ref{akpt.prop.glob-fin}
ensures that for every $A \in \Ob(\AAA)$ there is an essential coloring
$\gamma_A : \hom(A, F) \to t_\AAA(A)$. Let $\gamma = (\gamma_A)_{A \in \Ob(\AAA)}$,
and let $\calF_\gamma = (F, \gamma) \in \Ob(\CC^*)$.
As we have seen in Section~\ref{akpt.sec.exp-group-acts}, expansions with unique restrictions
induce group actions in a straightforward manner: for $g \in G$ and $\calF \in U^{-1}(F)$
by $\calF^g$ we denote the unique element of $U^{-1}(F)$ satisfying $g^{-1} \in \hom(\calF, \calF^g)$.
Specifically, if $\calF = (F, \phi)$ and $\calF^g = (F, \phi^g)$ then
$$
  \phi^g(g^{-1} \cdot e) = \phi(e), \text{ for all } A \in \Ob(\AAA) \text{ and } e \in \hom(A, F).
$$
Moreover, this action of $G$ on $U^{-1}(F)$ is continuous (Proposition~\ref{akpt.prop.cont-act}).
Since $U^{-1}(F)$ is compact (Lemma~\ref{akpt.lem.U-1compact}) there is an
$\calF^* = (F, \phi^*) \in \overline{\calF_\gamma^G}$ such that $\overline{(\calF^*)^G}$
is minimal with respect to inclusion. Put
$$
  \AAA^* = \Age(\calF^*).
$$
By the assumption $F$ is locally finite, so Lemma~\ref{akpt.lem.loc-fin-U} ensures that so is $\calF^*$.
This together with $\Ob(\AAA^*) = \Age(\calF^*)$ then implies that $\AAA^*$ is directed.
Moreover, by the choice of $\calF^*$, Lemma~\ref{akpt.lem.EP-1} and Proposition~\ref{akpt.prop.EP-2} yield that
$\restr{U}{\AAA^*} : \AAA^* \to \AAA$ has the expansion property. So,
in order to complete the proof of Theorem~\ref{akpt.thm.main2} we have to show that $\AAA^*$ has
the Ramsey property.

\begin{LEM}\label{akpt.lem.mintexp}
  Each $A \in \Ob(\AAA)$ has at least $t_\AAA(A)$ distinct expansions in $\AAA^*$.
\end{LEM}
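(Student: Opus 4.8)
The plan is to exhibit, for each $A \in \Ob(\AAA)$, at least $t_\AAA(A)$ expansions of $A$ in $\AAA^* = \Age(\calF^*)$ by reading them off from the fibres of the ``top'' coloring $\phi^*_A : \hom(A,F) \to t_\AAA(A)$ of $\calF^* = (F,\phi^*)$. First I would identify the expansions of $A$ inside $\AAA^*$: an object $\calA \in U^{-1}(A)$ lies in $\Age(\calF^*)$ iff $\calA \toCCC \calF^*$, and by Lemma~\ref{akpt.lem.restr} every such $\calA$ equals $\restr{\calF^*}{e}$ for some $e \in \hom(A,F)$. Writing $\restr{\calF^*}{e} = (A,\alpha)$, the defining condition for $e$ to be a morphism $\calA \to \calF^*$ in $\CC^*$ gives $\alpha(\id_A) = \phi^*(e \cdot \id_A) = \phi^*_A(e)$. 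Hence restrictions along morphisms carrying different $\phi^*_A$-colours are distinct objects of $U^{-1}(A)$, each lying in $\Age(\calF^*)$; so it suffices to prove that $\phi^*_A$ is \emph{surjective}, for then any $e_0,\dots,e_{t-1} \in \hom(A,F)$ with $\phi^*_A(e_i)=i$ yield $t = t_\AAA(A)$ pairwise distinct expansions.

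The crux is this surjectivity, and the key point is to replace the (non-closed) property ``surjective'' by the closed, $G$-invariant property ``\emph{essential}''. I would first argue that $\phi^*_A$ is essential. From the compactness argument underlying Proposition~\ref{akpt.prop.glob-fin}, the essential colorings of $\hom(A,F)$ form an intersection of sets $\Phi_{e,B}$, each of which constrains only the finitely many coordinates indexed by $e \cdot \hom(A,B)$ and is therefore closed; thus essentiality is a closed condition on each component. It is also $G$-invariant: for $g \in G$ one computes $(\gamma^g_A)^{(w)} = \gamma_A^{(g \cdot w)}$ for every $w \in \hom(B,F)$, so since $\gamma_A$ is essential each component $\gamma^g_A$ of $\calF_\gamma^g$ is again essential. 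Consequently every point of $\calF_\gamma^G$ lies in the closed set $\{\phi : \phi_A \text{ is essential for every } A\}$, and, as $\sigma_F$ agrees with the product topology (Lemma~\ref{akpt.lem.U-1compact}), this closed set contains $\overline{\calF_\gamma^G}$ and hence $\calF^*$. (Only $\calF^* \in \overline{\calF_\gamma^G}$ is used here, not the minimality of $\overline{(\calF^*)^G}$.) Therefore $\phi^*_A$ is essential for every $A$.

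Finally I would deduce surjectivity from essentiality through Lemma~\ref{akpt.lem.ramseyF}. Put $t = t_\AAA(A)$; the statement is trivial when $t = 1$, so assume $t \ge 2$ and suppose, for contradiction, that $|\mathrm{im}\,\phi^*_A| = s < t$. For each $B \in \Ob(\AAA)$ with $A \toCC B$ choose $w \in \hom(B,F)$ (possible since $F$ is universal for $\AAA$); then $(\phi^*_A)^{(w)}$ is essential at $B$ and takes at most $s$ values, and relabelling its image, which preserves kernels and hence essentiality at $B$, produces a coloring $\hom(A,B) \to s$ essential at $B$. As this holds for all such $B$, the implication $(3)\Rightarrow(1)$ of Lemma~\ref{akpt.lem.ramseyF}, applied with the bound $s$, forces $t_\AAA(A) \le s < t$ (in the degenerate case $s = 1$ the same compactness reasoning with constant colorings forces $t_\AAA(A) = 1$, again contradicting $t \ge 2$). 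This contradiction shows $\phi^*_A$ is onto $t_\AAA(A)$, completing the argument. The main obstacle is exactly the middle step: recognising that the invariant surviving the passage to the closure $\overline{\calF_\gamma^G}$ is essentiality rather than surjectivity, and that essentiality of $\phi^*_A$ is strong enough to recover all $t_\AAA(A)$ colours.
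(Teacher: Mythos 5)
Your argument is correct, and it shares the paper's endpoints --- everything is reduced to surjectivity of $\phi^*_A$, and the $t_\AAA(A)$ expansions are then read off as restrictions $\restr{\calF^*}{e_i}$ distinguished by the colour $\alpha_i(\id_A)$ --- but the route to surjectivity is genuinely different. The paper never establishes that $\phi^*_A$ itself is essential. Instead it fixes the extremal pair $(B,\chi)$ witnessing $t_\AAA(A)=t$ (so that $\chi^{(w)}$ is onto $t$ for every $w\in\hom(B,F)$), uses $\Age(\calF^*)\subseteq\Age(\calF_\gamma)$ (via Lemma~\ref{akpt.lem.EP-1}) to identify the single finite restriction $(\phi^*_A)^{(e)}$ with $\gamma_A^{(h)}$ for a suitable $h\in\hom(B,F)$, and then invokes essentiality of $\gamma_A$ at $B$ against that one coloring $\chi$: from $\ker\gamma_A^{(h)}\subseteq\ker\chi^{(w)}$ and surjectivity of $\chi^{(w)}$ it concludes that $(\phi^*_A)^{(e)}$, hence $\phi^*_A$, is onto $t$. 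You instead prove the stronger and reusable fact that essentiality of each component is a closed, $G$-invariant condition in the product topology (which Lemma~\ref{akpt.lem.U-1compact} identifies with $\sigma_F$), hence holds throughout $\overline{\calF_\gamma^G}$ and in particular for $\phi^*$, and you then derive surjectivity by contradiction through Lemma~\ref{akpt.lem.ramseyF} $(3)\Rightarrow(1)$. Both transfers ultimately rest on the same two ingredients --- the passage to the orbit closure and the extremality of $t_\AAA(A)$ --- but yours isolates a structural statement about every member of $\overline{\calF_\gamma^G}$ at the price of the relabelling step and a separate treatment of the degenerate cases $t=1$ and your $s=1$, where ``essential'' is formally undefined for $1$-colorings; you handle these correctly, so there is no gap.
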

\begin{proof}
  Take any $A \in \Ob(\AAA)$ and let $t = t_\AAA(A)$. Then there is a $B \in \Ob(\AAA)$ and a coloring
  $\chi : \hom(A, F) \to t$ such that
  $$
    (\forall w \in \hom(B, F)) |\chi(w \cdot \hom(A, B))| = t.
  $$
  In other words, $\chi^{(w)} : \hom(A, B) \to t : f \mapsto \chi(w \cdot f)$ is surjective for each
  $w \in \hom(B, F)$. Fix an arbitrary $e \in \hom(B, F)$ and let $\calB^* = (B, \beta^*) = \restr{\calF^*}{e}$.
  Then $\calB^* \in \Age(\calF^*)$ and
  \begin{equation}\label{akpt.eq.fin-1}
    \phi^*_A(e \cdot f) = \beta^*_A(f), \text{ for all } f \in \hom(A, B).
  \end{equation}
  Since $\calF^* \in \overline{\calF_\gamma^G}$ it follows by Lemma~\ref{akpt.lem.EP-1} that
  $\Age(\calF^*) \subseteq \Age(\calF_\gamma)$, so $\calB^* \in \Age(\calF_\gamma)$. This means that
  there is a morphism $h \in \hom(\calB^*, \calF_\gamma)$ whence
  \begin{equation}\label{akpt.eq.fin-2}
    \gamma_A(h \cdot f) = \beta^*_A(f), \text{ for all } f \in \hom(A, B).
  \end{equation}
  From \eqref{akpt.eq.fin-1} and \eqref{akpt.eq.fin-2} we, therefore, get
  $$
    \gamma_A(h \cdot f) = \phi^*_A(e \cdot f) , \text{ for all } f \in \hom(A, B),
  $$
  or, equivalently,
  $$
    \gamma_A^{(h)} = (\phi^*_A)^{(e)}.
  $$

  Note that $\gamma_A^{(h)}$ is essential at~$B$ (because $\gamma_A$ is essential)
  so there is a $w \in \hom(B, F)$ such that $\ker \gamma_A^{(h)} \subseteq \ker \chi^{(w)}$.
  Since $\chi^{(w)}$ is a surjective $t$-coloring, it follows that $\gamma_A^{(h)}$ is also a
  surjective $t$-coloring. Hence, $(\phi^*_A)^{(e)}$ is a surjective $t$-coloring.
  Choose $f_0, f_1, \ldots, f_{t-1} \in \hom(A, B)$ so that
  $$
    (\phi^*_A)^{(e)}(f_i) = \phi^*_A(e \cdot f_i) = i, \quad i < t
  $$
  and let $\calA^*_i = (A, \alpha^*_i) = \restr{\calF^*}{e \cdot f_i}$, $i < t$.
  Then for each $i < t$ we have that $\calA^*_i \in \Age(\calF^*) = \Ob(\AAA^*)$ and:
  $$
    \alpha^*_i(\id_A) = \phi^*_A(e \cdot f_i) = i.
  $$
  Therefore, $\calA^*_0$, \ldots, $\calA^*_{t-1}$ are $t$ distinct expansions of $A$ in $\AAA^*$
  (they are distinct because each $\alpha^*_i$ colors $\id_A$ by a different color).
\end{proof}

\begin{LEM}
  $\AAA^*$ has the Ramsey property.
\end{LEM}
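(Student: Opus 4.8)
The plan is to deduce the Ramsey property of $\AAA^*$ from the additivity formula for Ramsey degrees (Theorem~\ref{sbrd.thm.small2}) together with the lower bound on the number of expansions just established (Lemma~\ref{akpt.lem.mintexp}). Recall that $\AAA^*$ has the Ramsey property precisely when $t_{\AAA^*}(\calA) = 1$ for every $\calA \in \Ob(\AAA^*)$, so it suffices to prove this equality for each $\calA$. I would fix an arbitrary $A \in \Ob(\AAA)$ and count the expansions of $A$ inside $\AAA^*$, pinning each Ramsey degree down to $1$ by a pincer between an upper and a lower estimate.

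First I would apply Theorem~\ref{sbrd.thm.small2} to the restricted forgetful functor $\restr{U}{\AAA^*} : \AAA^* \to \AAA$. To invoke it I must check its hypotheses: morphisms in $\AAA$ are mono by (C1); $\AAA^*$ is directed and $\restr{U}{\AAA^*}$ has the expansion property, both already established above; and $\restr{U}{\AAA^*}$ is a reasonable expansion with unique restrictions. Unique restrictions are inherited from $U$ once one notes that $\Age(\calF^*) = \Ob(\AAA^*)$ is closed under taking restrictions (if $\calB \toCCC \calF^*$ via $u$ and $\calA = \restr{\calB}{e}$, then $u \cdot e$ witnesses $\calA \toCCC \calF^*$). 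Reasonableness requires the extension property of $F$: given $e : A \to B$ in $\AAA$ and $p : A \to F$, pick any $q_0 : B \to F$ (which exists since $B \in \Age(F)$); homogeneity of $F$ yields a $g \in \Aut(F)$ with $g \cdot q_0 \cdot e = p$, so $\calB := \restr{\calF^*}{g \cdot q_0}$ is an expansion of $B$ in $\AAA^*$ with $\restr{\calB}{e} = \calA$. With the hypotheses in place, Theorem~\ref{sbrd.thm.small2} gives
$$
  t_\AAA(A) = \sum_{\calA \in \Ob(\AAA^*),\, U(\calA) = A} t_{\AAA^*}(\calA).
$$

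Finally I would run the counting argument. By Lemma~\ref{akpt.lem.mintexp} there are at least $t_\AAA(A)$ distinct objects $\calA \in \Ob(\AAA^*)$ with $U(\calA) = A$, so the sum above has at least $t_\AAA(A)$ summands; since every Ramsey degree satisfies $t_{\AAA^*}(\calA) \ge 1$, the sum is at least $t_\AAA(A)$, and because it equals $t_\AAA(A)$ every summand must be exactly $1$ (and there are exactly $t_\AAA(A)$ of them). Hence $t_{\AAA^*}(\calA) = 1$ for all $\calA \in \Ob(\AAA^*)$, which is precisely the Ramsey property. I expect the main obstacle to be the bookkeeping needed to legitimately restrict Theorem~\ref{sbrd.thm.small2} to the pair $(\AAA^*, \AAA)$ -- especially confirming reasonableness of $\restr{U}{\AAA^*}$ through the extension property of $F$ -- since once the additivity formula is available the pincer against the expansion-count lower bound closes immediately.
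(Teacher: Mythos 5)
Your proof is correct and follows essentially the same route as the paper: apply the additivity formula of Theorem~\ref{sbrd.thm.small2} to $\restr{U}{\AAA^*}$, invoke Lemma~\ref{akpt.lem.mintexp} to get at least $t_\AAA(A)$ summands, and conclude each summand equals $1$. The only difference is that you explicitly verify reasonableness and unique restrictions of the restricted functor (correctly, via homogeneity of $F$ and closure of $\Age(\calF^*)$ under restrictions), details the paper asserts without elaboration.
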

\begin{proof}
  Take any $\calA^* \in \Ob(\AAA^*)$ and let $A = U(\calA^*)$. As we have seen,
  $\restr{U}{\AAA^*} : \AAA^* \to \AAA$ is a reasonable expansion with unique restrictions which has the expansion property.
  So, Theorem~\ref{sbrd.thm.small2} applies and
  $$
    t_{\AAA}(A) = \sum_{\calA \in (\restrindex{U}{\AAA^*})^{-1}(A)} t_{\AAA^*}(\calA).
  $$
  Lemma~\ref{akpt.lem.mintexp} implies that $|(\restr{U}{\AAA^*})^{-1}(A)| \ge t_\AAA(A)$.
  The equality above then yields that $t_{\AAA^*}(\calA) = 1$
  for all $\calA \in (\restr{U}{\AAA^*})^{-1}(A)$. In particular, $t_{\AAA^*}(\calA^*) = 1$.
\end{proof}

\noindent
This completes the proof of Theorem~\ref{akpt.thm.main2}.

\bigskip

Finally, as an immediate consequence of the Duality Principle we now have:

\begin{COR}
  Let $\CC$ be a locally small category and
  let $\CCfin$ be a full subcategory of $\CC$ such that (C1)$^\op$ -- (C5)$^\op$ hold.
  Let $F \in \Ob(\CC)$ be a projectively homogeneous, projectively locally finite object, and let $\AAA$ be the full subcategory of
  $\CCfin$ spanned by $\ProjAge(F)$. Then the following are equivalent:
  
  (1) $\AAA$ has finite dual Ramsey degrees.
  
  (2) There is a projectively reasonable precompact expansion with unique projective restrictions $U : \CC^* \to \CC$ and
  a full subcategory $\AAA^*$ of $\CC^*$ which is projectively directed, has the dual Ramsey property and
  $\restr{U}{\AAA^*} : \AAA^* \to \AAA$ has the projective expansion property.
\end{COR}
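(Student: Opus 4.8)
The plan is to derive this corollary directly from Theorem~\ref{akpt.thm.main2} by means of the Duality Principle, reading every ``projective'' notion in the statement as the corresponding plain notion interpreted in the opposite category $\CC^\op$ (together with $\CCfin^\op$). First I would assemble the translation dictionary supplied by the definitions of Sections~\ref{akpt.sec.prelim}--\ref{akpt.sec.exp-group-acts}: the hypotheses (C1)$^\op$--(C5)$^\op$ for $(\CC,\CCfin)$ are by definition precisely (C1)--(C5) for $(\CC^\op,\CCfin^\op)$; $F$ being projectively homogeneous and projectively locally finite means $F$ is homogeneous and locally finite in $\CC^\op$; and $\ProjAge_{(\CC,\CCfin)}(F) = \Age_{(\CC^\op,\CCfin^\op)}(F)$, so the subcategory $\AAA$ spanned by $\ProjAge(F)$ corresponds to the subcategory $\AAA^\op$ of $\CCfin^\op$ spanned by $\Age_{(\CC^\op,\CCfin^\op)}(F)$. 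Finally, $\AAA$ has finite dual Ramsey degrees precisely when $\AAA^\op$ has finite Ramsey degrees.

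With this dictionary in place, I would apply Theorem~\ref{akpt.thm.main2} to the pair $(\CC^\op,\CCfin^\op)$ and the object $F$. The theorem yields a reasonable precompact expansion with unique restrictions $V : (\CC^\op)^* \to \CC^\op$ together with a directed full subcategory $\calD$ of $(\CC^\op)^*$ that has the Ramsey property and for which $\restr{V}{\calD} : \calD \to \AAA^\op$ has the expansion property. Taking opposites then produces the data required by~(2): set $\CC^* = ((\CC^\op)^*)^\op$, let $U = V^\op : \CC^* \to \CC$, and let $\AAA^* = \calD^\op$. Since $U^\op = V$ is reasonable with unique restrictions, $U$ is projectively reasonable with unique projective restrictions; since $\calD$ is directed and has the Ramsey property, $\AAA^*$ is projectively directed and has the dual Ramsey property; and since $\restr{V}{\calD}$ has the expansion property and is the opposite of $\restr{U}{\AAA^*}$, the latter has the projective expansion property. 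As the entire biconditional of Theorem~\ref{akpt.thm.main2} is a categorical statement, both implications dualize simultaneously, so no separate argument for the converse is needed.

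The one point requiring care, and the only place where the mechanical dualization could go wrong, is the word \emph{precompact}: the statement asks for a precompact expansion rather than a ``projectively precompact'' one. This is harmless because precompactness refers only to the fibers $U^{-1}(A)$ of the forgetful functor on objects, and objects are unchanged by passing to the opposite category; hence $U$ is precompact if and only if $U^\op = V$ is precompact, and the property is self-dual. Once this observation is recorded, the corollary follows verbatim from Theorem~\ref{akpt.thm.main2} under the substitution of $\CC^\op$ for $\CC$.
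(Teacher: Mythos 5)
Your proposal is correct and is exactly the argument the paper intends: the paper derives this corollary in one line as ``an immediate consequence of the Duality Principle,'' i.e.\ by applying Theorem~\ref{akpt.thm.main2} to $(\CC^\op,\CCfin^\op)$ and translating each projective notion back, which is precisely the dictionary you spell out. Your extra remark that precompactness is self-dual (since it only constrains the fibers $U^{-1}(A)$ on objects) is a worthwhile detail the paper leaves implicit.
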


\section{Acknowledgements}

The author gratefully acknowledges the financial support of the Ministry of Education, Science and Technological Development
of the Republic of Serbia (Grant No.\ 451-03-68/2020-14/200125).

\section{Data Availability Statement}

Data sharing not applicable to this article as no datasets were generated or analysed during the current study.

\end{document}